\numberwithin{equation}{section}
\theoremstyle{plain}
\newtheorem{thm}{Theorem}[section]
\theoremstyle{remark}
\newtheorem{rem}{Remark}[section]
\theoremstyle{definition}
\theoremstyle{plain}
\newtheorem{pro}{Proposition}[section]
\theoremstyle{plain}
\newtheorem{lem}{Lemma}[section]
\theoremstyle{plain}
\newtheorem{cor}{Corollary}[section]
\newtheorem{ass}{AS\hspace*{-3pt}}
\def\r{\rightarrow}
\newcommand{\E}{\mathbb{E}}     
\renewcommand{\P}{\mathbb{P}}     
\renewcommand{\L}{\mathbb{L}}     
\newcommand{\F}{\mathbb{F}}     
\newcommand{\N}{\mathbb{N}}     
\newcommand{\R}{\mathbb{R}}     
\newcommand{\C}{\mathbb{C}} 
\newcommand{\X}{\mathbb{X}} 
\newcommand{\T}{\mathbb{T}} 
\newcommand{\cA}{\mbox{$\cal A$}}
\newcommand{\cB}{\mbox{$\cal B$}}
\newcommand{\cC}{\mbox{$\cal C$}}
\newcommand{\cF}{\mbox{$\cal F$}}
\newcommand{\cG}{\mbox{$\cal G$}}
\newcommand{\cL}{\mbox{$\cal L$}}
\newcommand{\cM}{\mbox{$\cal M$}}
\newcommand{\cN}{\mbox{$\cal N$}}
\newcommand{\cP}{\mbox{$\cal P$}}
\newcommand{\cX}{\mbox{$\cal X$}}
\newcommand{\cV}{\mbox{$\cal V$}}
\newcommand{\cO}{\mbox{$\cal O$}}
\newcommand{\mprod}[2]{\overset{#2}{\underset{i=#1}{\otimes}}}
\begin{document}

\title{Limit theorems for stationary Markov processes with $\L^2$-spectral gap}  

\author{Déborah FERR\'E, Loïc HERV\'E, James LEDOUX  \footnote{
Université
Européenne de Bretagne, I.R.M.A.R. (UMR-CNRS 6625), Institut National des Sciences 
Appliquées de Rennes. Deborah.Ferre,Loic.Herve,JamesLedoux@insa-rennes.fr}}

\maketitle

\begin{abstract}
 Let $(X_t,Y_t)_{t\in\T}$ be a discrete or continuous-time Markov process with state space $\X\times \R^d$ where $\X$ is an arbitrary measurable set. Its transition semigroup is assumed to be additive with respect to the second component, i.e.~$(X_t,Y_t)_{t\in\T}$ is assumed to be a Markov additive  process. In particular, this implies that the first component $(X_t)_{t\in\T}$ is also a Markov process. Markov random walks or additive functionals of a Markov process are special instances of Markov additive processes. 
 In this paper, the process $(Y_t)_{t\in\T}$ is shown to satisfy the following classical limit theorems: 
\begin{enumerate}[(a)]
\item the central limit theorem, 
\item the local limit theorem, 
\item the one-dimensional Berry-Esseen theorem, 
\item the one-dimensional first-order Edgeworth expansion, 
\end{enumerate}
provided that we have $\sup_{t\in(0,1]\cap\T}\E_{\pi,0}[|Y_t|^{\alpha}]<\infty$ with the expected order $\alpha$ with respect to the independent case (up to some $\varepsilon>0$ for (c) and (d)). For the statements (b) and (d), a Markov nonlattice condition is also assumed as in the independent case. 
 All the results are derived under the assumption that the Markov process $(X_t)_{t\in\T}$ has an invariant probability distribution $\pi$, is stationary and has the $\L^2(\pi)$-spectral gap property (that is,  $(X_t)_{t\in\N}$ is $\rho$-mixing in the discrete-time case). The case where  $(X_t)_{t\in\T}$ is non-stationary is briefly discussed. As an application, we derive a Berry-Esseen bound for the $M$-estimators associated with $\rho$-mixing Markov chains.   \\[2mm]
\textbf{subject classification :} 60J05, 60F05, 60J25, 60J55, 37A30, 62M05  \\[2mm]
\textbf{Keywords :} Markov additive process, central limit theorems, Berry-Esseen bound, Edgeworth expansion, spectral method, $\rho$-mixing, $M$-estimator.
\end{abstract}

\section{Introduction} 

In this paper, we are concerned with the class of  Markov Additive Processes (MAP). The discrete and continuous-time cases are considered so that the time parameter set $\T$ will denote $\N$ or $[0,+\infty)$. Let $\X$ be any set equipped by a 
 $\sigma$-algebra $\cX$ and let $\cB(\R^d)$ be the Borel $\sigma$-algebra on $\R^d$ ($d\ge 1$).  A (time homogeneous) MAP $(X_t,Y_t)_{t\in \T}$ is a (time homogeneous) Markov process with state space $\X\times \R^d$ and transition semigroup $(Q_t)_{t\in\T}$ satisfying: $\forall t \in \T$, $\forall (x,y)\in \X\times \R^d$, $\forall (A,B)\in \cX \times \cB(\R^d)$,
\begin{equation} \label{Add_pro}
	 Q_t(x,y;A\times B) = Q_t(x,0;A\times B-y).
\end{equation}
In other words, the transition semigroup is additive in the second component. It follows from the definition that the first component $(X_t)_{t\in \T}$ of a MAP  is a (time homogeneous) Markov process. The second component $(Y_t)_{t\in \T}$ must be thought of as a process with  independent increments given $\sigma(X_s, s \ge 0)$. We refer to \cite{Cin72-2} for the general structure of such processes. Note that a discrete-time MAP is also called a Markov Random Walk (MRW). In stochastic modelling, the first component of a MAP is usually associated with a random environment which drives or modulates the additive component $(Y_t)_{t\in \T}$. The MAPs have been found to be an important tool in various areas as communication networking (e.g. see \cite{PacTanPra09,PacPra95,Asm03}), finance (e.g. see \cite{Asm00,AsmAvrPis04,JobRog06}), reliability (e.g. see \cite{Cin77,OzeSoy04,LimOpr01,GraLed04}), \ldots Some important instances of MAP  are: 
\begin{itemize}
\item in discrete/continuous-time : $(X_t,Y_t)_{t\in \T}$ where $(Y_t)_{t\in\T}$ is a $\R^d$-valued additive functional (AF) of the Markov process $(X_t)_{t\in\T}$. Therefore any result on the second component of a MAP applies to an AF.   
Basic discrete and continuous-time AFs are respectively 
\begin{equation} \label{Basic_FA}
	Y_0=0, \, \forall t\in\N^*, \quad Y_t = \sum_{k=1}^{t} \xi(X_k); \qquad \forall t \in [0,+\infty[, \quad Y_t = \int_{0}^{t} \xi(X_s) \, ds
\end{equation}
where $\xi$ is a $\R^d$-valued function satisfying conditions under which $Y_t$ is well-defined for every $t\in \T$. When $(X_t)_{t\in\T}$ is a regular Markov jump process, then any non-decreasing AF has the form (e.g. \cite{Cin75})
\begin{equation*} 
	\int_{0}^{t} \xi_1(X_s) \, ds + \sum_{s\le t} \xi_2(X_{s-},X_s)
\end{equation*}
where $X_{t-}= \lim_{s\rightarrow t, s<t} X_s$, $\xi_1$ and $\xi_2$ are non-negative measurable functions such that $\xi_2(x,x)=0$ for every $x\in \X$. General representations and properties of AFs may be found in \cite[and references therein]{BenJac73,RevYor99}.  
 Such AFs are basically introduced when some kind of ``rewards'' are collected along with the dynamics of the Markov process $(X_t)_{t\in\T}$ through the state space $\X$. Thus, $Y_t$ is the accumulated reward  on the finite interval $[0,t]$.  
Even if the state space $\X$ is a finite set, the numerical computation of the probability distribution of such AFs is not an easy task (e.g. see \cite{BlaMeiNeuSer02,Ste05}). 

	\item in discrete-time: the Markov renewal processes when the random variables $Y_t$, $t\in\N$, are non-negative; if we consider a hidden Markov chain $(X_t,Z_t)_{t\in \N}$, where the so-called observed process $(Z_t)_{t\in \N}$ is $\R^d$-valued ($Z_0=0$), then $(X_t,\sum_{k=1}^{t} Z_k)_{t\in \N}$ is a MAP.
	\item in continuous time: the Markovian Arrival Process where $(X_t)_{t\in \T}$ is a regular jump process and $(Y_t)_{t\in \T}$ is a point process (see \cite{Asm03}), which includes the so-called Markov Modulated Poisson Process. 
\end{itemize}

 Seminal works on MAPs are \cite{Nev61,EzhSko69-1,EzhSko69-2,KeiWis64,Pin68} and are essentially concerned with a finite Markov process $(X_t)_{t\in \T}$ as first component. The second component $(Y_t)_{t\in \T}$ was sometimes called a process defined on a Markov process.  When $\X$ is a finite set, the structure of MAPs are well understood and an account of what is known can be found in \cite[Chap~XI]{Asm03}. 
In this paper, we are concerned with Gaussian approximations of the distribution of the second component $Y_t$ of a MAP. Central limit theorems  for $(Y_t)_{t\in \T}$ may be found in \cite{KeiWis64,Pin68,FukHit67,HitShi70,Bha82,Tou83,KipVar86,GlyWhi93,Ste01,Hol05} under various assumptions. 
 Here, such results are derived when $(X_t)_{t\in\T}$ has an invariant probability measure $\pi$, is stationary and has the $\L^2(\pi)$-spectral gap property (see conditions (AS\ref{AS1}-AS\ref{AS-spectralgap}) below). Moreover, standard refinements of the central limit theorem (CLT) related to the convergence rate are provided. Before, notations and assumptions used throughout the paper are introduced. 

Let $(X_t,Y_t)_{t\in \T}$ be a MAP with state space $\X\times \R^d$ and transition semigroup $(Q_t)_{t\in\T}$. $(\X,\cX)$ is assumed to be a measurable space equipped with a  
$\sigma$-algebra $\cX$. In the continuous-time case, $(X_t,Y_t)_{t\in \T}$ is assumed to be progressively measurable. $(X_t)_{t\in\T}$ is also a Markov process with transition semigroup $(P_t)_{t\in\T}$ given by  
\begin{equation*}
	P_t(x,A) := Q_t(x,0; A\times \R^d).
\end{equation*}
 Throughout the paper, we assume that $(X_t)_{t\in\T}$ has a unique invariant probability measure denoted by $\pi$ $(\forall t\in\T,\ \pi\circ P_t = \pi)$. We denote by $\L^2(\pi)$ the usual Lebesgue space of (classes of) functions $f : \X \r \C$ such that $\|f\|_2:=\sqrt{\pi(|f|^2)} = (\int_\X |f|^2 d\pi)^{1/2} <\infty$. The operator norm of a bounded linear operator $T$ on $\L^2(\pi)$ is defined by $\|T\|_2:=\sup_{\{f\in\L^2(\pi):\|f\|_2=1\}}\|T(f)\|_2$. We appeal to the following conditions.
\begin{ass} \label{AS1}
 $(X_t)_{t\in\T}$ is stationary (i.e.~$X_0\sim \pi$).
\end{ass}
\begin{ass} \label{AS-spectralgap} The semigroup $(P_t)_{t\in\T}$ of $(X_t)_{t\in\T}$ has a spectral gap on $\L^2(\pi)$:
\begin{equation} \label{strong-erg1}
\lim_{t\r+\infty}\|P_t-\Pi\|_2=0, 
\end{equation}
where $\Pi$ denotes the rank-one projection defined on $\L^2(\pi)$ by: $\Pi f = \pi(f)1_{\X}$.  
\end{ass}
\begin{ass} \label{AS-alpha} The process $(Y_t)_{t\in \T}$ satisfies the moment condition  
\begin{equation} \label{moment-alpha} 
\sup_{t\in(0,1]\cap\T}\E_{\pi,0}[|Y_t|^{\alpha}]<\infty  
\end{equation}
where $|\cdot|$ denotes the euclidean norm on $\R^d$ and $\E_{\pi,0}$ is the expectation when  $(X_0,Y_0)\sim (\pi,\delta_0)$.
\end{ass}
In the discrete-time case, notice that the moment condition~(\ref{moment-alpha}) reduces to \textbf{(AS3d)}
\begin{equation} \label{AS3d} 
 \E_{\pi,0}[|Y_1|^{\alpha}]<\infty  \tag{AS3d}
\end{equation}
and that condition (AS\ref{AS-spectralgap}) is  equivalent to the $\rho$-mixing property of $(X_t)_{t\in\N}$, with $\rho$-mixing coefficients going to 0 exponentially fast \cite{Ros71}. Condition (AS\ref{AS-spectralgap}) is also related to the notion of essential spectral radius (e.g.~see \cite{Wu04}).

Under (AS\ref{AS1}-AS\ref{AS-spectralgap}), we show that the second component $(Y_t)_{t\in\T}$ of the MAP satisfies, in discrete and continuous time, the following standard limit theorems : {\it 
\begin{enumerate}[(a)] \setlength{\itemsep}{0mm}
\item the central limit theorem, under (AS\ref{AS-alpha}) with the optimal value $\alpha=2$;
\item the local limit theorem, under (AS\ref{AS-alpha}) with the optimal value $\alpha=2$ and the additional classical Markov non-lattice condition;   
\item the one-dimensional Berry-Esseen theorem, under (AS\ref{AS-alpha}) with the (almost) optimal value  ($\alpha>3$);
\item  a one-dimensional first-order Edgeworth expansion, under (AS\ref{AS-alpha}) with the (almost) optimal value ($\alpha>3$) and the Markov non-lattice condition.
\end{enumerate}
}
These results correspond to the classical statements for the sequences of independent and identically distributed (i.i.d.) random variables, with the same order $\alpha$ (up to $\varepsilon>0$ in (c) and (d)). Such results are known for special MAPs satisfying (AS\ref{AS-spectralgap}) (comparison with earlier works is made after each statement), but to the best of our knowledge, the results (a)-(d) are new for general MAPs satisfying (AS\ref{AS-spectralgap}), as, for instance, for AF involving unbounded functionals. 

Here, the main arguments are 
\begin{itemize} \setlength{\itemsep}{0mm}
	\item for the statement (a): the $\rho$-mixing property  of the increments $(Y_{t+1}-Y_{t})_{t\in\T}$ of the process $(Y_t)_{t\in\T}$ (see Proposition~\ref{mixing}).  This result, which has its own interest, is new to the best of our knowledge. The closest work to this part is a result of \cite{GriOpr76} which, by using $\phi$-mixing properties, gives the CLT for MAPs associated with uniformly ergodic driving Markov chains (i.e.~$(P_t)_{t\in\T}$  has a spectral gap on the usual Lebesgue space $\L^\infty(\pi)$). Condition~(AS\ref{AS-spectralgap}) is less restrictive than uniform ergodicity (which is linked to the so-called Doeblin condition).  
\item For the refinements (b-d) : the Nagaev-Guivarc'h spectral method. The closest works to this part are,  in discrete-time the paper  \cite{HerPen09} in which these refinements are obtained for the AF: $Y_t = \sum_{k=1}^t\xi(X_k)$, and in continuous-time the work of Lezaud \cite{Lez01} which proves, under the uniform ergodicity assumption, a Berry-Esseen bound for the integral additive functional (\ref{Basic_FA}). Here, in discrete-time, we borrow to a large extent the weak spectral method of \cite{HerPen09}: this is outlined in Proposition~\ref{carac-en-0-disc}, which gives a precise expansion (close to the i.i.d.~case) of  the characteristic function of $Y_t$. For continuous-time MAPs, similar expansions can be derived from the semigroup property of the Fourier operators of the MAP. Proposition~\ref{carac-en-0-disc},  and its continuous-time counterpart Proposition~\ref{carac-en-0-cont}, are the key results to establish limit theorems (as for instance the statements (b-d)) with the help of Fourier techniques. 
\end{itemize}

The classical (discrete and continuous-time) models for which the spectral gap property~(AS\ref{AS-spectralgap}) is met, are briefly reviewed in   Subsections~\ref{sub-sec-geo-ergo}-\ref{sub-sec-cont-time-ex}.
 The above limit theorems (a)-(d) are valid in all these examples and open up possibilities for new applications. First, our moment conditions are optimal (or almost optimal). For instance, in continuous time, the Berry-Esseen bound in  \cite{Lez01} requires that $\xi$ in the integral (\ref{Basic_FA}) is bounded, while our statement (c) holds true under the condition  $\pi(|\xi|^{3+\varepsilon})<\infty$. Second, our results are true for general MAPs. For instance, they apply to $Y_t = \sum_{k=1}^t \xi(X_{k-1},X_k)$. This fact enables us to prove a Berry-Esseen bound for $M$-estimators associated with  $\rho$-mixing Markov chains, under a moment condition which greatly improves the results in \cite{Rao73}.
 
The paper is organised as follows. The $\L^2(\pi)$-spectral gap assumption for a Markov process is briefly discussed in Section~\ref{Section_L2} and connections to standard ergodic properties are pointed out. In Section~\ref{Section_CLT}, the CLT for $(Y_t)_{t\in\T}$ under (AS\ref{AS1})-(AS\ref{AS-alpha}) with $\alpha=2$ is derived. The functional central limit theorem (FCLT) is also discussed. Section~\ref{Section_Rafinements} is devoted to refinements of the CLT.  
First, the Fourier operator is introduced in Subsection~\ref{SS_Fourier}, the characteristic function of $Y_t$ is investigated  in Subsection~\ref{SS_expansions}, and our limit theorems are proved for discrete-time MAPs in Subsection~\ref{SS_DT}. Their extension to the non-stationary case is discussed in Subsection~\ref{non-stat-carac}. The continuous-time case is studied in Subsection~\ref{SS_CT}. 
The statistical application to $M$-estimators for $\rho$-mixing Markov chains is developed in Section~\ref{sect-B-E-statist}. 

Finally, we point out that the natural way to consider the Nagaev-Guivarc'h method in continuous-time is the  semigroup property of the Fourier operators of the MAP (see Subsection~\ref{SS_Fourier} for details). To the best of our knowledge, this property, which is  closely related to the additivity condition (\ref{Add_pro}) defining a MAP, has been introduced and only exploited in \cite{HitShi70}.

\section{The $\L^2(\pi)$-spectral gap property (AS\ref{AS-spectralgap})} \label{Section_L2}

\subsection{Basic facts on property (AS\ref{AS-spectralgap})} \label{sec-basic-fact-map}

We discuss the condition~(AS\ref{AS-spectralgap}) for the semigroup $(P_t)_{t\in\T}$ of $(X_t)_{t\in\T}$.  
It is well-known that $(P_t)_{t\in\T}$ is a contraction semigroup on each Lebesgue-space $\L^p(\pi)\, $ for $1\leq p \leq +\infty$, that is: we have $\|P_t\|_p \leq 1$ for all $t\in\T$, where $\|\cdot\|_p$ denotes the operator norm on $\L^p(\pi)$. 
Condition~(AS\ref{AS-spectralgap}), introduced by Rosenblatt \cite{Ros71} and also called strong ergodicity on $\L^2(\pi)$, implies that $(P_t)_{t\in\T}$ is strongly ergodic on each $\L^p(\pi)\, $ ($1< p < +\infty$), that is $\|P_t-\Pi\|_p \r 0$ when $t\r +\infty$. Moreover, (AS\ref{AS-spectralgap}) is fulfilled under the so-called uniform ergodicity property, i.e.~the strong ergodicity on $\L^\infty(\pi)$. These properties, established in \cite{Ros71}, can be easily derived from the Riesz-Thorin interpolation theorem \cite{BerLof76}  which insures, thanks to the contraction property of $P_t$, that 
\begin{equation} \label{rev-interpol-ineg}
\|P_t-\Pi\|_p \leq \|P_t-\Pi\|_{p_1}^{\alpha} \|P_t-\Pi\|_{p_2}^{1-\alpha} \leq 2\, \min\big\{\|P_t-\Pi\|_{p_1}^{\alpha}, \|P_t-\Pi\|_{p_2}^{1-\alpha}\big\},
\end{equation}
where $p_1,p_2\in[1,+\infty]$ and $p\in[1,+\infty]$ satisfy $1/p = \alpha/p_1 + (1-\alpha)/p_2$ for some $\alpha\in[0,1]$.  
Indeed, assume that Condition~(AS\ref{AS-spectralgap}) holds. Then Inequality~(\ref{rev-interpol-ineg}) with $(p_1=2,p_2=+\infty)$ and  $\alpha\in(0,1)$ gives the strong ergodicity on $\L^p(\pi)$ for each $p\in (2,+\infty)$. Notice that the value $p=+\infty$ is obtained with $\alpha=0$, but in this case, the uniform ergodicity cannot be deduced from (AS\ref{AS-spectralgap}) and (\ref{rev-interpol-ineg}). In fact the uniform ergodicity condition is stronger than (AS\ref{AS-spectralgap}) (see \cite{Ros71}). Next Inequality~(\ref{rev-interpol-ineg}) with $(p_1=2,p_2=1)$ and  $\alpha\in(0,1)$ gives the strong ergodicity on $\L^p(\pi)$ for each $p\in (1,2)$. The value $p=1$ is obtained with $\alpha=0$, but the strong ergodicity on $\L^1(\pi)$ cannot be deduced from (AS\ref{AS-spectralgap}) and (\ref{rev-interpol-ineg}). Finally, if the uniform ergodicity is assumed, then Inequality~(\ref{rev-interpol-ineg}) with $(p_1=+\infty,p_2=1)$ and  $\alpha=1/2$ yields (AS\ref{AS-spectralgap}). 

Also notice that the strong ergodicity property on $\L^p(\pi)$ holds if and only if there exists some strictly positive constants $C$ and $\varepsilon$ such that we have for all $t\in\T$: 
\begin{equation} \label{Expo}
\|P_t-\Pi\|_p \leq C\, e^{-\varepsilon t}.	
\end{equation}
Indeed, if $\kappa_0 := \|P_\tau-\Pi\|_p <1 $ for some $\tau\in\T$ (which holds under the strong ergodicity property), then we have for all $n\in\N^*$:  
$\|P_{n\tau}-\Pi\|_p = \|P_\tau^n-\Pi\|_p = \|(P_\tau-\Pi)^n\|_p \leq \kappa_0^n$. Writing $t = w+ n\tau$ with $n\in\N^*$ and $w\in[0,\tau)$, we obtain: $\|P_t-\Pi\|_p = \|P_w(P_\tau^n - \Pi)\|_p \leq \kappa_0^n \leq C\, e^{-\varepsilon t}$ with $C:=1/\kappa_0$ and $\varepsilon:=(-1/\tau)\ln\kappa_0$. The converse implication is obvious. Thus, the strong ergodicity property on $\L^2(\pi)$, i.e condition (AS\ref{AS-spectralgap}), is equivalent to require $\L^2(\pi)$-exponential ergodicity (\ref{Expo}), that is the $\L^2(\pi)$-spectral gap property.

In the next subsection, Markov models with a spectral gap on $\L^2(\pi)$ arising  from stochastic modelling and potentially relevant to our framework are introduced. Assumption (AS2) can be also met in more abstract settings, as for 
instance in \cite{Gui02} where the $\L^2$-spectral gap property for classic Markov operators (with a state space defined as the $d$-dimensional torus) is proved.

\subsection{Geometric ergodicity and property (AS\ref{AS-spectralgap})} \label{sub-sec-geo-ergo} 

 Recall that $(X_t)_{t\in \N}$ is $V$-geometrically ergodic if its transition kernel $P$ has an invariant probability measure $\pi$ and is such that there are $r\in(0,1)$, a finite constant K and a $\pi$-a.e finite function $V: \X \mapsto [1,+\infty]$ such that 
	\[  \forall n\ge 0, \ \pi-\text{a.e. } x\in \X, \quad \sup\big\{ |P^nf(x) -\pi(f)|, \ f : \X\r \C, |f|\le V \big\} \le K\, V(x) \, r^n. \tag{VG}
\]
  In fact, when $(X_t)_{t\in \N}$ is $\psi$-irreducible (i.e. $\psi(A)>0 \Longrightarrow P(x,A)>0, \forall x\in\X$) and aperiodic \cite{MeyTwe93}, condition (VG) is equivalent to the standard geometric ergodicity property \cite{RobRos97}: there are functions $r : \X \r (0,1)$ and $C:\X \mapsto [1,+\infty)$ such that: for all $n\in \N,$ $\pi-\text{a.e. } x\in \X$,
	\[ \big\|P^n(x,\cdot) -\pi(\cdot)\big\|_{\mathrm{TV}}:= \sup\big\{ |P^nf(x) -\pi(f)|, \ f : \X\r \C, |f|\le 1 \big\} \le C(x) \, r(x)^n. 
\]
There is another equivalent operational condition to geometric ergodicity for  $\psi$-irreducible and aperiodic Markov chains $(X_t)_{t\in \N}$, the so-called `` drift-criterion'': there exist a function $V: \X \r [1,+\infty]$, a small set $C\subset \X$ and constants $\delta>0, b<\infty$ such that
	\[ P V \le (1-\delta) V + b 1_C.
\]
 We refer to \cite{MeyTwe93} for details and applications, and to \cite{Jon04} for a recent survey on the CLT for the additive functionals of $(X_t)_{t\in\N}$ in (\ref{Basic_FA}). 
Now, the transition kernel $P$ is said to be reversible with respect to $\pi$ if 
	\[ \pi(dx) P(x,dy) = \pi(dy) P(y,dx)
\]
or equivalently if $P$ is self-adjoint on the space $\L^2(\pi)$. It is well known that a $V$-geometrically ergodic Markov chain with a reversible transition kernel has the $\L^2(\pi)$-spectral gap property \cite{RobRos97}. Moreover, 
 for a $\psi$-irreducible and aperiodic Markov chain $(X_t)_{t\in \N}$ with reversible transition kernel, ($V$-)geometric ergodicity is shown to be equivalent to the existence of a spectral gap in $\L^2(\pi)$, and, when $X_0\sim \mu$, we also have \cite[Th~2.1]{RobRos97},\cite{RobTwe01}
\[ \big\|\mu P^n(\cdot) -\pi(\cdot)\big\|_{\mathrm{TV}} \le \frac{1}{2} \big|\mu -\pi\big|_{L^2(\pi)} \, r^n. \tag{R}
\]
 where $r:=\lim_{n\r +\infty}\big(\| P^n - \Pi\|_2\big)^{1/n}$ and $\big|\mu -\pi\big|_{L^2(\pi)}:=\| d\mu/d\pi -1\|_{2}$ if well-defined and $\infty$ otherwise.
 Note that the reversibility condition is central to the previous discussion on the $\L^2(\pi)$-spectral gap property. Indeed, there exists a $\psi$-irreducible and aperiodic Markov chain which is geometrically ergodic but does not admit a spectral gap on $\L^2(\pi)$ \cite{Hag06}. 
 
Such a context of geometric ergodicity and reversible kernel is relevant to the Markov Chain Monte Carlo methodology for sampling a given probability distribution, i.e.~the target distribution. Indeed, the basic idea is to define a Markov chain $(X_t)_{t\in \N}$ with the target distribution as invariant probability measure $\pi$. Then a MCMC algorithm  is a scheme to draw samples from the  stationary Markov chain $(X_t)_{t\in \N}$. But, the initial condition of the algorithm, i.e.~the probability distribution of $X_0$, is not $\pi$ since the target distribution is inaccessible. Therefore  the convergence in distribution of the Markov chain  
to $\pi$ in regard of the probability distribution of $X_0$ must be guaranteed  and the knowledge of the convergence rate is crucial to monitor the sampling. Thus,  central limit theorem for the Markov chains and quantitative bounds as in (R) are highly expected. 
  Geometric ergodicity of Hasting-Metropolis type algorithms has been investigated by many researchers. Two standard instances are the full dimensional and random-scan symmetric random walk Metropolis algorithm \cite[and references therein]{JarHan00,ForMouRobRos03}. Note that the first algorithm is also referred to as a special instance of the Hasting algorithm and the second one to as a Metropolis-within-Gibbs sampler. Let $\pi$ be a  probability distribution on $\R^d$ which is assumed to have a positive and continuous density with respect to the Lebesgue measure. The so-called proposal  densities are assumed to be bounded away from $0$ in some region around zero (the moves through the state space $\X$ are based on these probability distributions). These conditions assert that the corresponding transition kernel for each algorithm is $\psi$-irreducible, aperiodic and is reversible with respect to $\pi$.
Geometric ergodicity for the Markov chain $(X_t)_{t\in\N}$ (and so the existence of a spectral gap in $\L^2(\pi)$) is closely related to the tails of the target distribution $\pi$. For instance, in the first algorithm, it can be shown that $\pi$ must have an exponential moment \cite[Cor~3.4]{JarHan00}. A sufficient condition for geometric ergodicity in case of super-exponential target densities, is of the form \cite[Th~4.1]{JarHan00}
	\[ \lim_{|x|\r +\infty} \langle \frac{x}{|x|}, \frac{\nabla\pi(x)}{|\nabla\pi(x)|} \rangle <0.
\]
For the second algorithm, sufficient conditions for geometric ergodicity are reported in \cite{ForMouRobRos03} when the target density decreases either subexponentially or exponentially in the tails. A very large set of examples and their respective merit are discussed in these two references. 
We refer to \cite[and references therein]{RobRos04} for a recent survey on the theory of Markov chains in connection with MCMC algorithms.

\subsection{Uniform ergodicity and hidden Markov chains} \label{sub-sec-unif-ergo} 

As quoted in the introduction, a discrete-time MAP is closely related to a hidden Markov chain. Standard issues for hidden Markov chains require to be aware of the convergence rate of the hidden Markov state process $(X_t)_{t\in\N}$. One of them is the state estimation via filtering or smoothing. 
In such a context, minorization conditions on $P$  are usually involved. The basic one is: 
 there exists a bounded positive measure $\varphi$ on $\X$ such that for some $m\in\N^*$:
	\[ \forall x \in\X, \forall A \in \cX, \quad P^m(x,A)\ge \varphi(A). \tag{UE}
\]
It is well-known that this is equivalent to the uniform ergodicity property or to condition (VG) with $V(x)=1$ \cite[Th 16.2.1, 16.2.2]{MeyTwe93}. Recall that uniform ergodicity gives the $\L^2(\pi)$-spectral gap property (AS\ref{AS-spectralgap}), but the converse is not true. 
 Another minorization condition is the so-called ``Doeblin condition'': there exists a probability measure $\varphi$ such that for some $m$, $\varepsilon <1$ and $\delta >0$ \cite{Doo53}
	\[ \varphi(A) > \varepsilon \Longrightarrow \forall x\in \X, \quad P^m(x,A) \ge \delta. \tag{$D_0$} 
\]
 It is well known that, for ergodic and aperiodic Markov chains, $(D_0)$ is equivalent to the uniform ergodicity. We refer to \cite[and the references therein]{CapMouRyd05} for an excellent overview of the interplay between the Markov chain theory and the hidden Markov models.

\subsection{Property~(AS\ref{AS-spectralgap}) for continuous time Markov processes} \label{sub-sec-cont-time-ex}
The Markov jump processes are a basic class of continuous-time Markov models which has a wide interest in stochastic modelling. The $\L^2(\pi)$-exponential convergence has received attention  a long time ago. We refer to \cite{Che04} for a good account of what is known on ergodic properties for such processes. 
 In particular, the $\L^2(\pi)$-spectral gap property is shown to be equivalent to the standard exponential ergodicity for the birth-death processes:
\begin{equation*}
	\exists \beta >0 \text{ such that }\forall (i,j)\in \X^2, \exists C_{i}\ge 0, \quad |P_t(i,j)-\pi_j | \le C_{i}\exp(-\beta t)  \quad t\rightarrow +\infty 
\end{equation*}
 where $(P_t(i,j))_{i,j\in \X}$ is the matrix semigroup of $(X_t)_{t\ge 0}$. 
This is also true for the reversible Markov jump processes. Hence, in these cases, criteria for exponential ergodicity are also valid to check the $\L^2(\pi)$-exponential convergence. Moreover, explicit bounds on the spectral gap are discussed in details in \cite{Che04}. For the birth-death processes, we also refer  to \cite[and references therein]{Kar00}) where  explicit formulas are obtained for classical Markov queuing processes. The birth-death processes are often used as reference processes for analyzing general stochastic models. This idea was in force in the Liggetts's derivation of the $\L^2$-exponential convergence of  supercritical nearest particle systems \cite{Lig89}. The interacting systems of particles are also a source of examples of processes with a $\L^2$-spectral gap. We refer to \cite{Lig89} for such a discussion on various classes of stochastic Ising models.  
In physics and specially in statistical physics, many evolution models are given by stochastic  ordinary/partial equations. When the solutions are finite/infinite dimensional Markov processes, standard issues arise: existence and uniqueness  of an invariant probability measure, ergodic properties which include the rate of convergence to the invariant measure with respect to some norm. 
Such issues may be included in the general topic of the stability of solutions of stochastic differential equations (SDEs).  
Thus, it is not surprising that ergodic concepts as the $V$-geometric ergodicity and Lyapunov-type criteria associated with, originally developed by Meyn and Tweedie \cite{MeyTwe93} for studying the stability of discrete-time Markov models, have been found to be of value (e.g. see \cite[and references therein]{GolMas06-LN}). Here, we are only concerned with the $\L^2(\pi)$-exponential convergence so that we only mention some results related with. 

An instance of $\L^2(\pi)$-spectral gap can be found in \cite{GanRoySim99} where the following SDE is considered
\begin{equation*}
	dX_t= -\frac{1}{2}\, b(X_t)\, dt + dW_t  \qquad X_0=x\in\R^d
\end{equation*}
where $(W_t)_{t\ge 0}$ is the standard $d$-dimensional Brownian motion and $b(\cdot)$ is a gradient field from $\R^d$ to $\R^d$ (with suitable properties ensuring essentially the existence of a unique strong solution to the equation, which has a unique invariant probability measure). When $b(\cdot)$ is a radial function satisfying $b(x)\sim C |x|^{\alpha}$ for $\alpha >1$ when $x\r +\infty$, then the semigroup is shown to be ultracontractive and to have a $\L^2(\pi)$-spectral gap \cite{GanRoySim99}. 

Another instance of $\L^2(\pi)$-spectral gap is related to the $\R$-valued Markov process solution to 
\begin{equation} \label{Diffusion}
dX_t = b(X_t)\, dt + a(X_t) \, dW_t
\end{equation}
where $(W_t)_{t\ge 0}$ is the standard $1$-dimensional Brownian motion and $X_0$ is a random variable independent of $(W_t)_{t\ge 0}$. Standard assumptions ensure that the solution of the SDE above is a positive recurrent diffusion on some interval and a (strictly) stationary ergodic time-reversible process. Under additional conditions on the scale and the speed densities of the diffusion $(X_t)_{t\ge 0}$  
\cite[(A4) and reinforced (A5), Prop. 2.8]{GenJeaLar00}, the transition semigroup of $(X_t)_{t\ge 0}$ is shown to have the $\L^2(\pi)$-spectral gap property (explicit bounds on the spectral gap are also provided). The basic example studied in \cite{GenJeaLar00} is when $a(x) := c x^{\nu}$ and $b(x):=\alpha(\beta -x)$ with $\nu\in[1/2,1]$, $\alpha,\beta \in\R$. Conditions ensuring the $\L^2(\pi)$- spectral gap property are provided in terms of these parameters. Applications to some classical models in finance are discussed.  
Note that statistical issues for continuous-time Markov processes as the jump or diffusion processes, are related to the time discretization or sampling schemes of these processes. This often provides discrete-time Markov chains which inherit ergodic properties of the original continuous-time process. Thus we turn to the discussion on the discrete-time case (e.g. see \cite{DehYao07} for the jump processes, \cite{GenJeaLar00} and the references therein for the (hidden) diffusions).       
Finally, the context of the stochastic differential equation (\ref{Diffusion}) can be generalized to Markov $H$-valued processes solution to infinite dimensional SDEs, where $H$ is a Hilbert space.
A good account of these generalizations can be found in \cite[and references therein]{GolMas06}.

\section{The $\rho$-mixing property and central limit theorems} \label{Section_CLT}

Let $(X_t,Y_t)_{t\in \T}$ be a MAP taking values in $\X\times\R^d$.  
 $\E_{(x,0)}$, $\E_{\pi,0}$ are the expectation with respect to the initial conditions $(X_0,Y_0)\sim (\delta_{x},\delta_{0})$ and $(X_0,Y_0)\sim (\pi,\delta_{0})$ respectively. First, basic facts for MAPs are proposed.  Second, they are used to show that, for a discrete-time MAP,  the increment process $(Y_n-Y_{n-1})_{n\in\N^*}$ is exponentially $\rho$-mixing under (AS\ref{AS1}-AS\ref{AS-spectralgap}). Then, a CLT is obtained under conditions (AS\ref{AS1}-AS\ref{AS-spectralgap}) and the expected moment condition (AS\ref{AS-alpha}) (i.e.~(\ref{AS3d})) with $\alpha= 2$.  

\subsection{Basic facts on MAPs}

Let $\F^{(X,Y)}_t:=\sigma(X_u,Y_u,\ u\le t)$, $\F^{X}_t:=\sigma(X_u,\ u\le t)$ and $\F^{Y}_t:=\sigma(Y_u,\ u\le t)$ be the filtration generated by the processes $(X_t,Y_t)_{t\in \T}$,  
$(X_t)_{t\in \T}$ and $(Y_t)_{t\in \T}$ respectively. 

The additivity property (\ref{Add_pro}) for the semigroup $(Q_t)_{t\in\T}$ reads as follows for any measurable ($\C$-valued) function $g$ on $\X\times\R^d$ and any $a\in\R^d$:
\begin{equation} \label{Add_proprime}
Q_t(g)_{a} = Q_t(g_{a})	
\end{equation}
where $g_{a}(x,y) := g(x,y+a)$ for every $(x,y)\in \X\times \R^d$. Let us introduce the following notation:
	\[ \widetilde{Q}_{s}(x;dx_1\times dy_1) := Q_{s}(x,0;dx_1\times dy_1).
\]
Then, we have: 
\begin{lem} \label{BasicFormula}
For any $\C$-valued function $g$ on $\X\times\R^d$ such that $\E[|g(X_u,Y_u)|]<\infty$ for every $u\in\T$, we have: 
\begin{equation} \label{Etat}
\E[ g(X_{s+t},Y_{s+t}) \mid \cF_s^{(X,Y)}]   = Q_t(g_{Y_s})(X_s,0) = \widetilde{Q}_{t}(g_{Y_s})(X_s) .
\end{equation}
or in terms of the increments of the process $(Y_t)_{t\in\T}$:
\begin{equation}  \label{Accroissements}
\E[ g(X_{s+t},Y_{s+t}-Y_s) \mid \cF_s^{(X,Y)}] = Q_t(g)(X_s,0)= \widetilde{Q}_{t}(g)(X_s)= \E_{(X_s,0)}[ g(X_{t},Y_{t})] .
\end{equation}
\end{lem}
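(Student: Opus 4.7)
The proof decomposes naturally into two steps: (i) apply the Markov property of the bivariate process $(X_t,Y_t)_{t\in\T}$ to reduce the conditional expectation to a semigroup action, and (ii) use the additivity relation (\ref{Add_pro}) to translate the $Y_s$-component onto the test function.

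\medskip

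\noindent\textbf{Step 1 (Markov property).} Since $(X_t,Y_t)_{t\in\T}$ is a (time homogeneous) Markov process with state space $\X\times\R^d$ and transition semigroup $(Q_t)_{t\in\T}$, for any bounded or nonnegative measurable $g$ and any $s,t\in\T$,
\[
\E[g(X_{s+t},Y_{s+t})\mid\cF_s^{(X,Y)}] = (Q_tg)(X_s,Y_s)
= \int_{\X\times\R^d} g(x_1,y_1)\, Q_t(X_s,Y_s;dx_1\times dy_1).
\]
In the continuous-time case this uses the assumed progressive measurability to ensure the regular version of the conditional expectation is well defined. Under the integrability hypothesis $\E[|g(X_u,Y_u)|]<\infty$ for every $u$, the identity extends to complex-valued $g$ in the usual way.

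\medskip

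\noindent\textbf{Step 2 (Additivity and change of variable).} By (\ref{Add_pro}), for fixed $x\in\X$ and $y\in\R^d$, the kernel $Q_t(x,y;\cdot)$ on $\X\times\R^d$ is the image of $Q_t(x,0;\cdot)=\widetilde Q_t(x;\cdot)$ under the translation $(x_1,y_1)\mapsto(x_1,y_1+y)$. Performing the change of variable $y_1\leadsto y_1+y$ in the integral above (with $y=Y_s$), we obtain
\[
(Q_tg)(x,y) = \int_{\X\times\R^d} g(x_1,y_1+y)\,\widetilde Q_t(x;dx_1\times dy_1)
= \widetilde Q_t(g_y)(x) = Q_t(g_y)(x,0),
\]
where the last equality simply rewrites $\widetilde Q_t$ as $Q_t(\,\cdot\,,0;\cdot)$. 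Combining this with Step 1 yields (\ref{Etat}).

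\medskip

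\noindent\textbf{Step 3 (Increment form).} To derive (\ref{Accroissements}), observe that $Y_s$ is $\cF_s^{(X,Y)}$-measurable, so conditionally on $\cF_s^{(X,Y)}$ we may regard $Y_s$ as a constant. Applying (\ref{Etat}) to the measurable function $\tilde g(x,y):=g(x,y-Y_s)$, whose shift by $Y_s$ is $\tilde g_{Y_s}(x,y)=g(x,y)$, gives
\[
\E[g(X_{s+t},Y_{s+t}-Y_s)\mid\cF_s^{(X,Y)}]
= \E[\tilde g(X_{s+t},Y_{s+t})\mid\cF_s^{(X,Y)}]
= Q_t(\tilde g_{Y_s})(X_s,0)
= Q_t(g)(X_s,0),
\]
which equals $\widetilde Q_t(g)(X_s)=\E_{(X_s,0)}[g(X_t,Y_t)]$ by definition of $\widetilde Q_t$ and of $\E_{(x,0)}$.

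\medskip

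There is no real obstacle: the statement is essentially a bookkeeping consequence of the Markov property combined with the translation invariance (\ref{Add_pro}). The only mild point to watch is the conditioning step in (\ref{Accroissements}), where one must justify treating $Y_s$ as a deterministic parameter inside the conditional expectation; this is standard, since $\tilde g$ is jointly measurable in $(Y_s,x,y)$ and $Y_s$ is $\cF_s^{(X,Y)}$-measurable, so Fubini applied to the regular conditional kernel $Q_t$ legitimizes the substitution.
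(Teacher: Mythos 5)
Your proof is correct and follows essentially the same route as the paper's: condition via the Markov property to get $Q_t(g)(X_s,Y_s)$, use the additivity relation to shift $Y_s$ onto the test function (your Step 2 just spells out the change of variables behind the paper's identity $Q_t(g)_a = Q_t(g_a)$), and obtain the increment form by applying the first identity to $g_{-Y_s}$. No substantive differences.
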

\begin{proof}{} The two formula are derived as follows:
\begin{eqnarray*}
\E[ g(X_{s+t},Y_{s+t}) \mid \cF_s^{(X,Y)}]	& = & \E[ g(X_{s+t},Y_{s+t}) \mid X_s,Y_s] \qquad \text{ (Markov property)} \\
	 & = & Q_t (g)(X_s,Y_s)    \\
	 & = & Q_t(g_{Y_s})(X_s,0) \qquad \text{(from (\ref{Add_proprime}))}\\
	 & = & \widetilde{Q}_{t}(g_{Y_s})(X_s) ; \\
\E[ g(X_{s+t},Y_{s+t}-Y_s) \mid \cF_s^{(X,Y)}]
	& = & \E[ g(X_{s+t},Y_{s+t}-Y_s) \mid X_s,Y_s] \qquad \text{ (Markov property)} \\
	 & = & \E[ g_{-Y_s}(X_{s+t},Y_{s+t}) \mid X_s,Y_s] \\
	 & = & Q_t(g_{0})(X_s,0)=\widetilde{Q}_{t}(g)(X_s) \qquad \text{ (from (\ref{Etat})) }\\
	 &  = &  \E_{(X_s,0)}[ g(X_{t},Y_{t} )].
\end{eqnarray*}
\vspace*{-3mm}
\end{proof}
\begin{lem} \label{LoiConjointeAccroissements}
For every $n\ge 1$, any $\C$-valued function $g$ such that  for every $0\le u_1 \le \cdots \le u_n$ $$\E\big[| g(X_{u_1},Y_{u_1},X_{u_2},Y_{{u_2}}-Y_{u_{1}},\ldots,X_{u_n},Y_{{u_n}}-Y_{u_{n-1}})|\big]<\infty$$ 
we have  for any $s\ge 0$ and $t_1,\ldots, t_n\ge 0$:
\begin{eqnarray} \label{Eq_stationnaire}
& & \E[g(X_{s+t_1},Y_{s+t_1}-Y_{s},\ldots,X_{s+\sum_{i=1}^nt_i},Y_{s+\sum_{i=1}^n t_i}-Y_{s+\sum_{i=1}^{n-1} t_i}) \mid \cF_{s}^{(X,Y)}] \nonumber\\
& & = \int \widetilde{Q}_{s}(X_s;dx_1\times dz_1) \prod_{i=2}^{n} \widetilde{Q}_{s}(x_{i-1};dx_i\times dz_i) g(x_1,z_1,\ldots,x_n,z_n)  \nonumber\\
& &= (\mprod{1}{n}\widetilde{Q}_{t_{i}})(g)\big(X_{s}\big).
\end{eqnarray}
\end{lem}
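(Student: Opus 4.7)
The natural approach is induction on $n$, using Lemma~\ref{BasicFormula} (specifically the increment form (\ref{Accroissements})) as both the base case and the inductive engine. Note first that the double-integral line of (\ref{Eq_stationnaire}) contains what looks like a notational slip ($\widetilde Q_s$ in place of $\widetilde Q_{t_i}$); the intended formula is the compact tensor-product expression on the last line, which is what the proof will establish.

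For the base case $n=1$, the claim is exactly (\ref{Accroissements}) applied with $t=t_1$. For the induction step, assume the identity holds for $n-1$ and set $\tau_k := s+\sum_{i=1}^k t_i$ and $\Delta_k := Y_{\tau_k}-Y_{\tau_{k-1}}$ for $k=1,\ldots,n$. The plan is to condition on the finer $\sigma$-algebra $\cF_{\tau_{n-1}}^{(X,Y)} \supseteq \cF_s^{(X,Y)}$ and use the tower property:
\begin{equation*}
\E[g(X_{\tau_1},\Delta_1,\ldots,X_{\tau_n},\Delta_n)\mid \cF_s^{(X,Y)}] = \E\bigl[\,\E[g(\cdots)\mid \cF_{\tau_{n-1}}^{(X,Y)}]\,\bigm|\,\cF_s^{(X,Y)}\bigr].
\end{equation*}
Since $(X_{\tau_1},\Delta_1,\ldots,X_{\tau_{n-1}},\Delta_{n-1})$ is $\cF_{\tau_{n-1}}^{(X,Y)}$-measurable, those variables pull out of the inner conditional expectation, which then involves only the last pair $(X_{\tau_n},\Delta_n) = (X_{\tau_{n-1}+t_n},Y_{\tau_{n-1}+t_n}-Y_{\tau_{n-1}})$. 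Applying (\ref{Accroissements}) with $s$ replaced by $\tau_{n-1}$ and $t$ by $t_n$ yields
\begin{equation*}
\E[g(\cdots)\mid \cF_{\tau_{n-1}}^{(X,Y)}] = h(X_{\tau_1},\Delta_1,\ldots,X_{\tau_{n-1}},\Delta_{n-1}),
\end{equation*}
where $h(x_1,z_1,\ldots,x_{n-1},z_{n-1}) := \widetilde Q_{t_n}\bigl(g(x_1,z_1,\ldots,x_{n-1},z_{n-1},\cdot,\cdot)\bigr)(x_{n-1})$.

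Now $h$ depends only on the first $n-1$ successive states and increments, so the induction hypothesis applies and produces
\begin{equation*}
\E[h(X_{\tau_1},\Delta_1,\ldots,X_{\tau_{n-1}},\Delta_{n-1})\mid \cF_s^{(X,Y)}] = \Bigl(\mprod{1}{n-1}\widetilde Q_{t_i}\Bigr)(h)(X_s).
\end{equation*}
Unfolding the definition of $h$, the right-hand side is exactly $\bigl(\mprod{1}{n}\widetilde Q_{t_i}\bigr)(g)(X_s)$, which closes the induction.

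The mathematical content is no deeper than a standard Chapman--Kolmogorov iteration powered by (\ref{Add_proprime}); the only real obstacle is bookkeeping. One must verify that the integrability hypothesis passes to the intermediate function $h$ (Fubini/conditional Jensen on $|g|$), and that the tensor-product kernel $\mprod{1}{n}\widetilde Q_{t_i}$ is associated (in the ``first variable integrated first'' convention used in the statement) in a way compatible with peeling off the \emph{last} factor in the induction step, rather than the first. A quick remark to that effect, together with the observation that applying (\ref{Accroissements}) at time $\tau_{n-1}$ is legitimate because the process is Markov additive with time-homogeneous transitions, will make the argument rigorous.
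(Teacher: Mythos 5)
Your proof is correct and is essentially the paper's argument: induction on $n$ powered by the tower property and formula~(\ref{Accroissements}), with the only difference being the direction of the peeling --- you condition at the last intermediate time $\tau_{n-1}$ and strip off the final increment with (\ref{Accroissements}), whereas the paper conditions at the first intermediate time $s+t_1$ and applies the induction hypothesis to the inner conditional expectation over the remaining $n-1$ increments; the two are mirror images and equally valid. Your reading of the middle line of (\ref{Eq_stationnaire}) as a notational slip (the kernels should be $\widetilde{Q}_{t_1},\ldots,\widetilde{Q}_{t_n}$, not $\widetilde{Q}_{s}$) is also correct.
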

\begin{proof}{}
 Lemma~\ref{BasicFormula} gives the case $n=1$. Let us check that Formula (\ref{Eq_stationnaire}) is valid for $n=2$. This can help the reader to follow the induction.
\begin{eqnarray*}
& & \E[g(X_{s+t_1},Y_{s+t_1}-Y_{s},X_{s+t_1+t_2},Y_{s+t_1+t_2}-Y_{s+t_1}) \mid \cF_{s}^{(X,Y)}]\\
& & =  \E\left[\E\big[g(X_{s+t_1},Y_{s+t_1}-Y_{s},X_{s+t_1+t_2},Y_{s+t_1+t_2}-Y_{s+t_1}) \mid \cF_{s+t_1}^{(X,Y)} \big] \mid \cF_{s}^{(X,Y)}\right]\\
& & =  \E\left[\E\big[g(X_{s+t_1},Y_{s+t_1}-Y_{s},X_{s+t_1+t_2},Y_{s+t_1+t_2}-Y_{s+t_1}) \mid X_{s+t_1},Y_{s+t_1} \big] \mid \cF_{s}^{(X,Y)}\right]\\
& & =  \E\big[ \int g(X_{s+t_1},Y_{s+t_1}-Y_{s},x_2,y_2-Y_{s+t_1}) Q_{t_2}(X_{s+t_1},Y_{s+t_1};dx_2\times dy_2) \mid \cF_{s}^{(X,Y)}\big]\\
& & =  \E\big[ \int g(X_{s+t_1},Y_{s+t_1}-Y_{s},x_2,z_2) \widetilde{Q}_{t_2}(X_{s+t_1};dx_2\times dz_2) \mid \cF_{s}^{(X,Y)}\big] \text{ (using (\ref{Add_pro})) }\\
& & =  \int \widetilde{Q}_{t_1}(X_{s};dx_1\times dz_1) \int \widetilde{Q}_{t_2}(x_1;dx_2\times dz_2)g(x_1,z_1,x_2,z_2)  \text{ (using (\ref{Accroissements}))}\\ \\
& & =(\widetilde{Q}_{t_{1}} \otimes \widetilde{Q}_{t_{2}})(g)\big(X_{s}\big).
\end{eqnarray*}

Let us now complete the induction. Assume that Property~(\ref{Eq_stationnaire}) is valid for $n-1$. Then
\begin{eqnarray*}
& & \E[g(X_{s+t_1},Y_{s+t_1}-Y_{s},\ldots,X_{s+\sum_{i=1}^n t_i},Y_{s+\sum_{i=1}^nt_i}-Y_{s+\sum_{i=1}^{n-1}t_i}) \mid \cF_{s}^{(X,Y)}]\\
& & =  \E\left[\E\big[g(X_{s+t_1},Y_{s+t_1}-Y_{s},\ldots,X_{s+\sum_{i=1}^nt_i},Y_{s+\sum_{i=1}^nt_i}-Y_{s+\sum_{i=1}^{n-1}t_i}) \mid \cF_{s+t_1}^{(X,Y)} \big] \mid \cF_{s}^{(X,Y)}\right]\\
&  & =\E\left[ (\mprod{2}{n}\widetilde{Q}_{t_{i}})\big(g(X_{s+t_1},Y_{s+t_1}-Y_s,\cdot,\cdots,\cdot)\big)\big(X_{s+t_1}\big) \mid \cF_{s}^{(X,Y)}\right] \text{ (induction)} \\
& & = \big(\widetilde{Q}_{t_{1}} \otimes ( \mprod{2}{n}\widetilde{Q}_{t_{i}}))(g)\big(X_{s}\big) \quad  \text{ (using (\ref{Accroissements})).}
\end{eqnarray*}
\end{proof}

\begin{cor} \label{Cor_statio} Under (AS\ref{AS1}),  
the following properties hold.
\leftmargini 1.3em
\begin{enumerate}
	\item The process $(Y_{t})_{t\in \T}$ has stationary increments, i.e.~
\begin{equation} \label{Acc_stationnaire}
 \E_{\pi,0}\big[g(Y_{s+t_1}-Y_{s},\ldots,Y_{s+\sum_{i=1}^nt_i}-Y_{s+\sum_{i=1}^{n-1}t_i})\big] = 
\pi\big((\otimes_{1}^{n}\widetilde{Q}_{t_{i}})(g)\big)
\end{equation}
does not depend on $s$ for any function $g$ as in Lemma~\ref{LoiConjointeAccroissements}. 
\item If $\E_{\pi,0}[|Y_{u}|] < \infty$ for every $u\in\T$, then: 
\[ \forall (s,t)\in\T^2,\ \E_{\pi,0}[Y_{s+t}] = \E_{\pi,0}[Y_t] + \E_{\pi,0}[Y_s].\] 
\item $(\xi_n:=Y_{n}-Y_{n-1})_{n\in\N^*}$ is a stationary sequence of $\R^d$-valued random variables and if $h$ is a $\C$-valued function such that $\E_{\pi,0}\big[|h(\xi_1,\ldots,\xi_n)|^2\big]=1$, then $\widetilde{Q}_1^{\otimes n} (h)\in\L^2(\pi)$ with
	\begin{equation} \label{L2Norme}
	\| \widetilde{Q}_1^{\otimes n} (h) \|_{2} \le 1,
\end{equation}
where $\widetilde{Q}_1^{\otimes n}$ denotes the $n$-fold kernel product $\mprod{1}{n}\widetilde{Q}_1$.
\end{enumerate}
\end{cor}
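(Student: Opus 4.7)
My plan is to obtain all three items by specialising Lemma~\ref{LoiConjointeAccroissements} to functions depending only on the $Y$-increment coordinates, and then combining this with the stationarity assumption (AS\ref{AS1}).

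For item~1, given $g$ as in the statement depending only on the increment variables $z_1,\ldots,z_n$, I would apply Lemma~\ref{LoiConjointeAccroissements} to the function $(x_1,z_1,\ldots,x_n,z_n)\mapsto g(z_1,\ldots,z_n)$. The identity (\ref{Eq_stationnaire}) gives the conditional expectation as $(\otimes_{i=1}^{n}\widetilde{Q}_{t_i})(g)(X_s)$. Taking $\E_{\pi,0}$-expectation and using (AS\ref{AS1}) (so $X_s\sim\pi$ for all $s$) yields $\pi\bigl((\otimes_{i=1}^{n}\widetilde{Q}_{t_i})(g)\bigr)$, which is manifestly independent of $s$; this is precisely (\ref{Acc_stationnaire}).

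Item~2 is then a direct consequence of the coordinatewise version of item~1 applied component by component: writing $Y_{s+t} = (Y_{s+t}-Y_s)+Y_s$, taking expectations and using stationarity of the increment (with $n=1$ and the test function $g(z)=z_j$ for each coordinate) gives $\E_{\pi,0}[Y_{s+t}-Y_s]=\E_{\pi,0}[Y_t-Y_0]=\E_{\pi,0}[Y_t]$, since $Y_0=0$. Adding $\E_{\pi,0}[Y_s]$ gives the claim.

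For item~3, stationarity of $(\xi_n)_{n\in\N^*}$ follows at once from item~1 applied at integer times $t_i=1$. The key step is the identification: from Lemma~\ref{LoiConjointeAccroissements} with $s=0$, $Y_0=0$, $t_i=1$, and $g$ depending only on the $z_i$'s, one gets
\begin{equation*}
\widetilde{Q}_1^{\otimes n}(h)(X_0) \;=\; \E\bigl[h(\xi_1,\ldots,\xi_n)\,\big|\,X_0\bigr]\quad\text{a.s.,}
\end{equation*}
so $\widetilde{Q}_1^{\otimes n}(h)(x) = \E_{(x,0)}[h(\xi_1,\ldots,\xi_n)]$ for $\pi$-a.e.\ $x$. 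Applying Jensen's (or Cauchy--Schwarz) inequality inside the conditional expectation and then integrating against $\pi$ gives
\begin{equation*}
\|\widetilde{Q}_1^{\otimes n}(h)\|_2^{2} \;\leq\; \int_{\X}\E_{(x,0)}\bigl[|h(\xi_1,\ldots,\xi_n)|^{2}\bigr]\,d\pi(x) \;=\; \E_{\pi,0}\bigl[|h(\xi_1,\ldots,\xi_n)|^{2}\bigr] \;=\; 1,
\end{equation*}
which is (\ref{L2Norme}). No step is genuinely difficult; the only point requiring some care is the clean identification of the iterated kernel $\widetilde{Q}_1^{\otimes n}$ with a conditional expectation of a function of the $Y$-increments, which is exactly what Lemma~\ref{LoiConjointeAccroissements} was designed to provide.
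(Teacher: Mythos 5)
Your proposal is correct and follows essentially the same route as the paper: integrate the conditional-expectation identity of Lemma~\ref{LoiConjointeAccroissements} against $\pi$ (using invariance/stationarity) for item~1, deduce item~2 from stationarity of the single increment, and obtain (\ref{L2Norme}) from the identification $\widetilde{Q}_1^{\otimes n}(h)(x)=\E_{(x,0)}[h(\xi_1,\ldots,\xi_n)]$ together with Jensen's inequality. No gaps.
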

\begin{proof}{}
Take the expectation of (\ref{Eq_stationnaire})  with respect to the probability mesure $\pi$: 
\begin{eqnarray*}
\lefteqn{ \E_{\pi,0}\big[g(Y_{s+t_1}-Y_{s},\ldots,Y_{s+\sum_{i=1}^n t_i}-Y_{s+\sum_{i=1}^{n-1} t_i})\big] } \\
& = & 	\E_{\pi,0} \left[\big(\mprod{1}{n}\widetilde{Q}_{t_{i}}\big)(g)(X_{s}) \right] = 
\E_{\pi P_s,0} \left[\big(\mprod{1}{n}\widetilde{Q}_{t_{i}}\big)(g)(X_{0}) \right] \\
& = & \E_{\pi,0} \left[\big(\mprod{1}{n}\widetilde{Q}_{t_{i}}\big)(g)(X_{0}) \right] \quad \text{ (invariance property of $\pi$).}
\end{eqnarray*}
The second property is deduced from the stationarity of the increments of $(Y_{t})_{t\in \T}$. Indeed, we can write $\E_{\pi,0}[Y_t]= \E_{\pi,0}[Y_{s+t}-Y_s] = \E_{\pi,0}[Y_{s+t}] - \E_{\pi,0}[Y_s]$. 
That $(\xi_n)_{n\in\N^*}$ is a stationary sequence of random variables follows from (\ref{Acc_stationnaire}) with $s=0,t_1=\cdots=t_n=1$. The last property follows from (\ref{Eq_stationnaire}) and the Jensen inequality 
\begin{equation*}
\big\|\widetilde{Q}_{1}^{\otimes n} h\|_{2}^2
 =  \big\|\E_{(\cdot,0)}\big[h(\xi_{1},\ldots,\xi_{n})\big]\|_{2}^2 \le   \E_{\pi,0}\big[|h(\xi_{1},\ldots,\xi_{n})| ^2\big]=1. 
\end{equation*}
\end{proof}

\begin{lem} \label{Correlation} Let $\xi_n := Y_n-Y_{n-1}$ for $n\in\N^*$ (recall that $Y_0=0$ a.s.). Let $f$ and $h$ be two $\C$-valued functions such that $\E_{\pi,0}\big[| f(\xi_1,\ldots,\xi_n)| ^2\big]<\infty$ and  $\E_{\pi,0}\big[| h(\xi_{n+t},\ldots,\xi_{n+t+m})|^2\big]<\infty$ for  $(t,n,m)\in(\N^*)^3$. Under (AS\ref{AS1}), the covariance has the following form   
\begin{equation} \label{Cov}
 \mathrm{Cov}_{\pi,0}\big(f(\xi_1,\ldots,\xi_n); h(\xi_{n+t},\ldots,\xi_{n+t+m})\big) = \E_{\pi,0}\left[ f(\xi_1,\ldots,\xi_n) (P_{t-1}-\Pi) \big(\widetilde{Q}_{1}^{\otimes m+1}(h)\big)(X_n)\right]
\end{equation}
where $\widetilde{Q}_{1}^{\otimes m+1}$ denotes the $(m+1)$-fold kernel product   $\mprod{1}{m+1}\widetilde{Q}_{1}$.
\end{lem}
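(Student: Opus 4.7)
The strategy is to compute the joint expectation $\E_{\pi,0}[f(\xi_1,\ldots,\xi_n)\,h(\xi_{n+t},\ldots,\xi_{n+t+m})]$ by a two-step conditioning, and then to subtract the product $\E_{\pi,0}[f(\xi_1,\ldots,\xi_n)]\cdot\E_{\pi,0}[h(\xi_{n+t},\ldots,\xi_{n+t+m})]$ handled via stationarity of the increments (Corollary~\ref{Cor_statio}(1)). The operator $P_{t-1}-\Pi$ will emerge naturally from the contrast between these two calculations.

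First, I would treat the ``future'' factor. Because $h(\xi_{n+t},\ldots,\xi_{n+t+m})$ depends only on the $m+1$ increments at times $n+t,\ldots,n+t+m$, I apply Lemma~\ref{LoiConjointeAccroissements} with $s=n+t-1$ and $t_1=\cdots=t_{m+1}=1$, after trivially extending $h$ to a function of the $(x_i,z_i)$ pairs that depends only on the $z_i$'s. This gives
$$\E\big[h(\xi_{n+t},\ldots,\xi_{n+t+m})\mid\cF_{n+t-1}^{(X,Y)}\big]=\big(\widetilde{Q}_{1}^{\otimes m+1}h\big)(X_{n+t-1}),$$
and, setting $\psi:=\widetilde{Q}_{1}^{\otimes m+1}h$ (which lies in $\L^2(\pi)$ by Corollary~\ref{Cor_statio}(3)), the tower property combined with the Markov property of $(X_k)_{k\in\N}$ yields
$$\E\big[h(\xi_{n+t},\ldots,\xi_{n+t+m})\mid\cF_{n}^{(X,Y)}\big]=\E\big[\psi(X_{n+t-1})\mid\cF_n^{X}\big]=(P_{t-1}\psi)(X_n).$$
Since $f(\xi_1,\ldots,\xi_n)$ is $\cF_n^{(X,Y)}$-measurable, pulling it out of the conditional expectation gives
$$\E_{\pi,0}\big[f(\xi_1,\ldots,\xi_n)\,h(\xi_{n+t},\ldots,\xi_{n+t+m})\big]=\E_{\pi,0}\big[f(\xi_1,\ldots,\xi_n)\,(P_{t-1}\psi)(X_n)\big].$$

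For the product term, stationarity of the increments (Corollary~\ref{Cor_statio}(1)) gives
$$\E_{\pi,0}\big[h(\xi_{n+t},\ldots,\xi_{n+t+m})\big]=\E_{\pi,0}\big[h(\xi_1,\ldots,\xi_{m+1})\big]=\pi(\psi),$$
and because $\pi(\psi)=(\Pi\psi)(x)$ for every $x\in\X$, this constant can be brought inside as $(\Pi\psi)(X_n)$. Subtracting the two identities yields exactly (\ref{Cov}).

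The only delicate point is the bookkeeping of the time indices: since the earliest increment in $h$ is $\xi_{n+t}=Y_{n+t}-Y_{n+t-1}$, the correct intermediate $\sigma$-field is $\cF_{n+t-1}^{(X,Y)}$ rather than $\cF_{n+t}^{(X,Y)}$, and this is precisely what produces $P_{t-1}$ (not $P_t$) in the final formula. Beyond this shift, the argument is a direct consequence of the Markov additive structure encoded in Lemma~\ref{LoiConjointeAccroissements} and requires no further estimate.
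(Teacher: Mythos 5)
Your proof is correct and follows essentially the same route as the paper: both rest on Lemma~\ref{LoiConjointeAccroissements}, produce $P_{t-1}$ from the $t-1$ intermediate time steps, and use stationarity of the increments to rewrite the product term as $(\Pi\,\widetilde{Q}_{1}^{\otimes m+1}h)(X_n)$. The only difference is presentational: you organize the computation as a two-stage conditioning via the tower property, whereas the paper applies the joint formula once to $g=f\times h$ and splits the resulting iterated kernel integral explicitly.
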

\begin{proof}{}
Apply formula~(\ref{Eq_stationnaire}) with $\E_{(x,0)}$ to the specific function $g(x_1,z_1,\ldots,x_{n+t+m},z_{n+t+m}) = f(z_1,\ldots,z_{n}) \newline \times h(z_{n+t},\ldots,z_{n+t+m})$ with $t,n,m\ge 1$:
\begin{eqnarray*}
& &\E_{(x,0)}\big[f(\xi_1,\ldots,\xi_n) h(\xi_{n+t},\ldots,\xi_{n+t+m})\big] = \widetilde{Q}_{1}^{\otimes n+m+t}(g)(x) \\
  &=&  \int_{(\X\times \R^d)^{n+t+m}} \widetilde{Q}_{1}(x;dx_1\times dz_1) \prod_{i=2}^{n+t+m} \widetilde{Q}_{1}(x_{i-1};dx_i\times dz_i) g(x_1,\ldots,z_{n+t+m})\\
  &=& \int_{(\X\times \R^d)^{n}} \widetilde{Q}_{1}(x;dx_1\times dz_1) \prod_{i=2}^{n} \widetilde{Q}_{1}(x_{i-1};dx_i\times dz_i)f(z_1,\ldots,z_{n})  \\
  & &   \quad \times \int_{(\X\times \R^d)^{t-1}}\prod_{i=n+1}^{n+t-1} \widetilde{Q}_{1}(x_{i-1};dx_i\times dz_i)  \\
  & & \quad \times \int_{(\X\times \R^d)^{m+1}} \prod_{i=n+t}^{n+t+m} \widetilde{Q}_{1}(x_{i-1};dx_i\times dz_i) h(z_{n+t},\ldots,z_{n+t+m}).
  \end{eqnarray*}  
  The second term reduces to $\int_{\X^{t-1}}\prod_{i=n+1}^{n+t-1} \widetilde{Q}_{1}(x_{i-1};dx_i\times \R^d)= \int_{\X^{t-1}}\prod_{i=n+1}^{n+t-1} P_{1}(x_{i-1};dx_i) = \int_{\X} P_{t-1}(x_n;dx_{n+t-1})$. The third is $\widetilde{Q}_{1}^{\otimes m+1}(h)(x_{n+t-1})$. Then we have 
\begin{eqnarray*}
& &\E_{(x,0)}\big[f(\xi_1,\ldots,\xi_n) h(\xi_{n+t},\ldots,\xi_{n+t+m})\big]\\
& = & \int_{(\X\times \R^d)^{n}} \widetilde{Q}_{1}(x;dx_1\times dz_1) \prod_{i=2}^{n} \widetilde{Q}_{1}(x_{i-1};dx_i\times dz_i) f(z_1,\ldots,z_{n})   P_{t-1} (\widetilde{Q}_{1}^{\otimes m+1}(h)\big)(x_n) \\
  &=& \E_{(x,0)}\left[ f(\xi_1,\ldots,\xi_n) P_{t-1} (\widetilde{Q}_{1}^{\otimes m+1}h\big)(X_n)\right] \text{ (using (\ref{Eq_stationnaire}) with $\E_{(x,0)}$)}. 
	\end{eqnarray*}
	Then, integrating against the probability measure $\pi(\cdot)$ gives 
\begin{equation} \label{inter}
\E_{\pi,0}\big[f(\xi_1,\ldots,\xi_n) h(\xi_{n+t},\ldots,\xi_{n+t+m})\big] =
\E_{\pi,0}\left[ f(\xi_1,\ldots,\xi_n) P_{t-1} (\widetilde{Q}_{1}^{\otimes m+1}h\big)(X_n)\right].
  \end{equation}
Since $\Pi \big(\widetilde{Q}_{1}^{\otimes m+1}(h)\big)(x) = \pi \big(\widetilde{Q}_{1}^{\otimes m+1}(h)\big)$ for every $x\in \X$, we obtain 
	\begin{eqnarray*}
	\E_{\pi,0}\left[ f(\xi_1,\ldots,\xi_n) \Pi (\widetilde{Q}_{1}^{\otimes m+1}(h)\big)(X_n)\right]  
	& = & \E_{\pi,0}[f(\xi_1,\ldots,\xi_n)] \pi \big(\widetilde{Q}_{1}^{\otimes m+1}(h)\big) \\
	& = & \E_{\pi,0}[f(\xi_1,\ldots,\xi_n)] \E_{\pi,0}[h(\xi_{n+t},\ldots,\xi_{n+t+m})] 
\end{eqnarray*}
where the last equality follows from (\ref{Acc_stationnaire}). 
\end{proof}
\begin{rem}  
We can prove a continuous-time counterpart of Lemma~\ref{Correlation}. But, we restrict ourself to the discrete-time version because this is the version we need in the paper. 
\end{rem}

\subsection{$\rho$-mixing property of $(Y_{n}-Y_{n-1})_{n\ge 1}$ for discrete-time stationary MAPs}

Let us recall some basic facts on the $\rho$-mixing of a (strictly) stationary sequence of random variables $(\xi_n)_{n\in\N^*}$ (e.g.~see \cite{Bra05-1}). For any $p\in \N^*$ and $q\in \N^*\cup\{\infty\}$ with $p\le q$, $\cG^{q}_{p}:= \sigma(\xi_p,\ldots,\xi_q)$  denotes the $\sigma$-algebra generated by random variables $\xi_p,\ldots,\xi_q$.  
The $\rho$-mixing coefficient at horizon $t>0$, denoted by $\rho(t)$, is defined by  
\begin{equation} \label{Def_rhot}
	\rho(t) := \sup_{n\in\N^*}\sup\left\{ |\mathrm{Corr}(f;h)| \ f\in\L^2(\cG^{n}_{1}), h\in \L^2(\cG_{n+t}^{\infty}) \right\}.
\end{equation}
where $\mathrm{Corr}(f;h)$ is the correlation coefficient of the two random variables $f$ and $g$.
In fact, $\rho$-mixing coefficient may be computed as follows from \cite[Prop~3.18]{Bra05-1}: for $t>0$
\begin{eqnarray} \label{Calcul_rho}
	\rho(t) & = & \sup_{n\in\N^*} 
\sup_{m\in\N^*} \sup\left\{ |\mathrm{Corr}(f;h)| \ f\in\L^2(\cG^{n}_{1}), h\in \L^2(\cG_{n+t}^{n+t+m})\right\}  .
\end{eqnarray}
The  stationary sequence $(\xi_n)_{n\in\N^*}$ is said to be \textit{$\rho$-mixing} if 
\begin{equation*}
	\lim_{t\r +\infty} \rho(t) = 0.
\end{equation*}

We know from condition (AS\ref{AS-spectralgap}) that $(X_n)_{n\in\N}$ is $\rho$-mixing \cite{Ros71}. In the special case when $Y_{n}:=\sum_{k=1}^n\xi(X_k)$, it is clear that $(Y_{n}-Y_{n-1})_{n\ge 1}$ is also $\rho$-mixing from \cite[p. 28]{Bra05-1}. We extend this fact to general $(Y_n)_{n\in\N}$ in the next proposition.
\begin{pro}[$\rho$-mixing] \label{mixing}
Under (AS\ref{AS1}-AS\ref{AS-spectralgap}), the stationary sequence $(\xi_n:=Y_{n}-Y_{n-1})_{n \in \N^*}$ is $\rho$-mixing at an exponential rate: there exists $\varepsilon >0$ such that 
\begin{equation*}
	\rho(t) = O\big(\exp(-\varepsilon t)\big).
\end{equation*}
\end{pro}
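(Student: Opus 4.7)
The plan is to read the $\rho$-mixing bound off the covariance formula (\ref{Cov}) of Lemma~\ref{Correlation}, together with the exponential operator bound (\ref{Expo}) for $p=2$ implied by (AS\ref{AS-spectralgap}) and the contraction estimate (\ref{L2Norme}) of Corollary~\ref{Cor_statio}(3). In view of the characterization~(\ref{Calcul_rho}) of $\rho(t)$, it suffices to show that for all $n,m,t\in\N^*$ and all scalar-valued $f\in\L^2(\cG_1^n)$, $h\in\L^2(\cG_{n+t}^{n+t+m})$,
\begin{equation*}
\bigl|\mathrm{Cov}_{\pi,0}\bigl(f(\xi_1,\ldots,\xi_n);\, h(\xi_{n+t},\ldots,\xi_{n+t+m})\bigr)\bigr| \;\le\; C\,e^{-\varepsilon t}\,\sigma_f\,\sigma_h,
\end{equation*}
where $\sigma_f,\sigma_h$ are the standard deviations, with $C,\varepsilon>0$ independent of $n,m,t,f,h$.

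First, I would center: since a covariance is invariant under centering, I replace $f$ and $h$ by $\widetilde f := f-\E_{\pi,0}[f(\xi_1,\ldots,\xi_n)]$ and $\widetilde h := h-\E_{\pi,0}[h(\xi_{n+t},\ldots,\xi_{n+t+m})]$, so that $\|\widetilde f\|_{L^2(\P_{\pi,0})}=\sigma_f$ and $\|\widetilde h\|_{L^2(\P_{\pi,0})}=\sigma_h$. Applying Lemma~\ref{Correlation} to $(\widetilde f,\widetilde h)$ and then Cauchy--Schwarz yields
\begin{equation*}
\bigl|\mathrm{Cov}_{\pi,0}(\widetilde f;\widetilde h)\bigr| \;\le\; \sigma_f\,\Bigl\|(P_{t-1}-\Pi)\bigl(\widetilde Q_1^{\otimes m+1}(\widetilde h)\bigr)(X_n)\Bigr\|_{L^2(\P_{\pi,0})}.
\end{equation*}
Under (AS\ref{AS1}) we have $X_n\sim\pi$, so the right-hand $L^2(\P_{\pi,0})$-norm equals the $\L^2(\pi)$-norm of the same function, which I control by the operator bound:
\begin{equation*}
\bigl\|(P_{t-1}-\Pi)\bigl(\widetilde Q_1^{\otimes m+1}(\widetilde h)\bigr)\bigr\|_2 \;\le\; \|P_{t-1}-\Pi\|_2\;\bigl\|\widetilde Q_1^{\otimes m+1}(\widetilde h)\bigr\|_2.
\end{equation*}

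Next, by (AS\ref{AS-spectralgap}) and the equivalence recorded in (\ref{Expo}) for $p=2$, there exist $C,\varepsilon>0$ with $\|P_{t-1}-\Pi\|_2\le C\,e^{-\varepsilon(t-1)}$. Applying (\ref{L2Norme}) of Corollary~\ref{Cor_statio} (by homogeneity, in the form $\|\widetilde Q_1^{\otimes m+1}(\widetilde h)\|_2 \le \|\widetilde h\|_{L^2(\P_{\pi,0})} = \sigma_h$) I obtain
\begin{equation*}
\bigl|\mathrm{Cov}_{\pi,0}(f;h)\bigr| \;=\; \bigl|\mathrm{Cov}_{\pi,0}(\widetilde f;\widetilde h)\bigr| \;\le\; C\,e^{\varepsilon}\,e^{-\varepsilon t}\,\sigma_f\,\sigma_h,
\end{equation*}
hence $|\mathrm{Corr}_{\pi,0}(f;h)|\le C'\,e^{-\varepsilon t}$ with $C':=Ce^{\varepsilon}$. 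Taking the supremum over $n,m,f,h$ as in (\ref{Calcul_rho}) gives $\rho(t)=O(e^{-\varepsilon t})$.

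There is no genuine obstacle; the proof is essentially a bookkeeping exercise. The only mild point of care is the normalization step in passing from covariances to correlations: one must check that Lemma~\ref{Correlation} can be applied after centering (which is immediate, since centering preserves the $\sigma$-algebra memberships and the covariance), and that (\ref{L2Norme}) is used in its homogeneous form with the centered function $\widetilde h$ so that the resulting bound involves $\sigma_h$ rather than $\|h\|_{L^2}$.
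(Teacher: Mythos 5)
Your proof is correct and follows essentially the same route as the paper's: both reduce the correlation bound to a covariance bound via normalization, apply the covariance identity of Lemma~\ref{Correlation} together with Cauchy--Schwarz and stationarity, and then combine the contraction estimate (\ref{L2Norme}) with the exponential bound (\ref{Expo}) on $\|P_{t-1}-\Pi\|_2$. The only cosmetic difference is that you center $f$ and $h$ and carry $\sigma_f,\sigma_h$ explicitly, whereas the paper normalizes $f$ and $h$ to have $\L^2$-norm $1$ at the outset.
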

\begin{proof}{} For the sake of simplicity, assume that $d=1$. First, note that the random variables $f$ and $h$ in (\ref{Calcul_rho}) can be assumed to be of $\L^2$-norm 1. Thus, we just have to deal with covariances. Recall that $(\xi_n:=Y_{n}-Y_{n-1})_{n\in\N^*}$ is known to be stationary under (AS\ref{AS1}) from Corollary~\ref{Cor_statio}. The $\sigma$-algebra $\cG^{n}_{1}$ and $\cG_{n+t}^{n+t+m}$ in the mixing coefficients will be relative to the stationary sequence  $(\xi_n)_{n\in\N^*}$.
Second, let us consider two $\L^2$-normed random variables $f(\xi_1,\ldots,\xi_n)\in\L^2(\cG^{n}_{1})$, $h( \xi_{n+t}, \ldots, \xi_{n+t+m})\in  \L^2(\cG_{n+t}^{n+t+m})$. For any $m\ge 1$,  
the map $x \mapsto  \widetilde{Q}_{1}^{\otimes m+1} (h)(x)$ belongs to $\L^2(\pi)$ and we have $\big\|\widetilde{Q}_{1}^{\otimes m+1} (h)\|_{2} \le 1$ from  Corollary~\ref{Cor_statio} (see (\ref{L2Norme})). 
Since $P_{t}$ and $\Pi$ are contractions on $\L^2(\pi)$, we have $(P_{t-1}-\Pi)\big(  \widetilde{Q}_{1}^{\otimes m+1} (h)\big) \in \L^2(\pi)$. 

The Cauchy-Schwarz inequality and the last comments allow us to write from (\ref{Cov})
\begin{eqnarray*}
\mathrm{Cov}(f;h)^2	 & \le & 
\E_{\pi,0}\left[ |f(\xi_1,\ldots,\xi_n)|^2\right] \E_{\pi,0}\left[ \big|(P_{t-1}-\Pi)\big( \widetilde{Q}_{1}^{\otimes m+1}(h)\big)(X_{n})\big| ^2\, \right] \nonumber \\
& & =  \E_{\pi,0}\left[ \big|(P_{t-1}-\Pi)\big( {\widetilde{Q}_{1}}^{\otimes m+1} (h)\big)(X_{0})\big| ^2\, \right] \quad \text{ ($\pi$ is $P_n$-invariant)}\\
&& =  \big\|(P_{t-1}-\Pi) \big( \widetilde{Q}_{1}^{\otimes m+1} (h)\big)\big\|_{2}^2 \\
& \le & \| P_{t-1} - \Pi \|_{2}^2\, \big\| \widetilde{Q}_{1}^{\otimes m+1} (h)\big\|_{2}^2 \quad \text{ (since $\widetilde{Q}_{1}^{\otimes m+1} (h) \in \L^2(\pi)$)}\\
& \le &\| P_{t-1} - \Pi \|_{2}^2 \qquad \text{ (since $\big\|\widetilde{Q}_{1}^{\otimes m+1} (h)\|_{2} \le 1$)}. 
\end{eqnarray*}
Therefore, it follows that for every $t \ge 1$:
\begin{eqnarray*} 
 \sup\left\{ |\mathrm{Corr}(f;h)| \ f\in\L^2(\cG^{n}_{1}), h\in \L^2(\cG_{n+t}^{n+t+m})\right\} & \le & 
 \| P_{t-1} - \Pi \|_{2}  .
\end{eqnarray*}
The right hand side term in the inequality above does not depend on $m$ and $n$, so that we obtain from (\ref{Calcul_rho})
\begin{equation*}
\rho (t)  \le \| P_{t-1} - \Pi \|_{2}.
\end{equation*}
The proof is completed by using the exponential estimate (\ref{Expo}) of $\| P_{t-1} - \Pi \|_{2}$ under (AS\ref{AS-spectralgap}).  
\end{proof}

\subsection{Central limit theorem for the Markov additive processes} \label{clt-discrete}

In a first step, we consider a discrete-time $\X\times\R^d$-valued MAP, $(X_n,Y_n)_{n\in \N}$, for which the driving Markov chain $(X_n)_{n\in \N}$ is assumed to satisfy  (AS\ref{AS1}-AS\ref{AS-spectralgap}). Recall that Condition (\ref{AS3d}) for $\alpha =2$ is 
\begin{equation*} 
	 \E_{\pi,0}\big[|Y_1|^{2}\big] < \infty. 
\end{equation*}
 This condition implies that $\E_{\pi,0}[|Y_1|] < \infty$, and we suppose that $\E_{\pi,0}[Y_1] = 0$ for convenience (if not, replace $Y_n$ by $Y_n -\E_{\pi,0}[Y_n]=Y_n - n\E_{\pi,0}[Y_1]$ from Corollary~\ref{Cor_statio}). 

We know from Proposition~\ref{mixing} that $(Y_{n+1}-Y_n)_{n\in\N}$ is stationary and is exponentially $\rho$-mixing when (AS\ref{AS1}-AS\ref{AS-spectralgap}) hold.
Then, under the expected moment condition $\E_{\pi,0}[|Y_1|^2]<\infty$, the CLT for $(Y_n)_{n\in\N^*}$ follows from \cite{Ibra75,Pel87} (e.g.~see \cite[Th~11.4]{Bra05-1}). 
 To the best of our knowledge, Theorem~\ref{clt-discret-multi} for general MAP is new. 
The notation $\cN(0,0)$ stands for the Dirac distribution at 0. 
\begin{thm} \label{clt-discret-multi} 
Under (AS\ref{AS1}-AS\ref{AS-spectralgap}) and (\ref{AS3d}) for $\alpha= 2$, $(Y_n/\sqrt n)_{n\in\N}$ converges in  distribution when $n\r+\infty$ to the $d$-dimensional Gaussian law  $\cN(0,\Sigma)$, where $\Sigma$ is the asymptotic covariance $d\times d$-matrix 
$$\Sigma \ : =\, \lim_n\frac{1}{n}\, \E_{\pi,0}\left[\, Y_n \, Y_n^*\, \right]$$
where the symbol $*$ denotes the transpose operator.   
\end{thm}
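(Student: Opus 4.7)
The plan is to reduce the statement to the classical central limit theorem for real stationary $\rho$-mixing sequences, and then lift it to $\R^d$ by a Cramér--Wold argument.

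First, write $Y_n = \sum_{k=1}^n \xi_k$ with $\xi_k := Y_k - Y_{k-1}$. By Corollary~\ref{Cor_statio} (item~3), the sequence $(\xi_k)_{k\ge 1}$ is stationary under (AS\ref{AS1}), and by Proposition~\ref{mixing}, under (AS\ref{AS1})--(AS\ref{AS-spectralgap}) it satisfies $\rho(t) = O(e^{-\varepsilon t})$ for some $\varepsilon>0$. The moment condition (\ref{AS3d}) with $\alpha = 2$ reads $\E_{\pi,0}[|\xi_1|^2]<\infty$, and after the (harmless) centering mentioned in the paragraph preceding the theorem we may assume $\E_{\pi,0}[\xi_1]=0$.

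Next, treat the scalar case $d=1$. The exponential $\rho$-mixing rate makes the summability hypothesis $\sum_n \rho(2^n)<\infty$ of the Ibragimov--Peligrad theorem trivially satisfied; together with stationarity and $\E_{\pi,0}[\xi_1^2]<\infty$, the result quoted from \cite[Th.~11.4]{Bra05-1} yields that $\sigma^2 := \lim_n \E_{\pi,0}[Y_n^2]/n$ exists in $[0,+\infty)$ and $Y_n/\sqrt n \Rightarrow \cN(0,\sigma^2)$. For general $d\ge 1$, fix $u\in\R^d$ and apply the preceding to the real, stationary process $(\langle u,\xi_k\rangle)_{k\ge 1}$: its $\rho$-mixing coefficients are bounded above by those of $(\xi_k)$ (the $\sigma$-algebras generated by the projections are sub-$\sigma$-algebras of the original ones), and $\E_{\pi,0}[\langle u,\xi_1\rangle^2] \le |u|^2\, \E_{\pi,0}[|\xi_1|^2]<\infty$. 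Hence $\langle u, Y_n\rangle/\sqrt n \Rightarrow \cN(0,\sigma_u^2)$ with
\[
\sigma_u^2 \;=\; \lim_n \frac{1}{n}\,\E_{\pi,0}\bigl[\langle u,Y_n\rangle^2\bigr] \;=\; u^*\!\left(\lim_n \frac{1}{n}\,\E_{\pi,0}[Y_n Y_n^*]\right)\! u \;=\; u^*\Sigma u,
\]
where the existence of the limit matrix $\Sigma$ follows from that of $\sigma_u^2$ for every $u$ via polarization (i.e.\ the bilinear form $(u,v)\mapsto \lim_n \E_{\pi,0}[\langle u, Y_n\rangle\langle v, Y_n\rangle]/n$ is well-defined). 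The Cramér--Wold device then delivers $Y_n/\sqrt n \Rightarrow \cN(0,\Sigma)$, which is exactly the conclusion.

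No genuine obstacle arises: all the real work has already been done in Corollary~\ref{Cor_statio} and Proposition~\ref{mixing}, which turn the theorem into a direct application of an existing mixing-type CLT, with Cramér--Wold handling the multidimensional packaging. The only mildly delicate point is the verification that the limiting covariance matrix is well-defined, but this is automatic once the scalar CLT produces a finite limiting variance along every direction $u$.
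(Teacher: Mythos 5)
Your proposal is correct and follows essentially the same route as the paper: the paper's entire argument is to invoke the stationarity of the increments (Corollary~\ref{Cor_statio}), the exponential $\rho$-mixing of Proposition~\ref{mixing}, and then cite the Ibragimov--Peligrad CLT for $\rho$-mixing sequences. You merely make explicit the Cram\'er--Wold reduction and the polarization argument for the existence of $\Sigma$, which the paper leaves implicit.
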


That $(Y_n/\sqrt n)_{n\in\N}$ satisfies the CLT under the  condition $\E_{\pi,0}[|Y_1|^{2}] < \infty$ was known in some  cases.  Such standard situations are recalled in the two next remarks (with $d=1$ to simplify). 
\begin{rem}[Martingale method] \label{clt-discret-xi} 
If there exists a measurable function $\xi : \X\r \R$  such that $Y_n-Y_{n-1} = \xi(X_n)$ and $\E_{\pi,0}[|Y_1|^{2}]=\pi(\xi^2)<\infty$, then $(Y_n/\sqrt n)_{n\in\N}$ converges in  distribution to the Gaussian law  $\cN(0,\sigma^2)$ where $\sigma^2 =  \pi(\xi^2) + 2\sum_{\ell=1}^{+\infty}\pi(\xi P^\ell\xi) \in[0,+\infty)$. 
This result follows from the Gordin-Lifsic theorem \cite{Gor78}. Indeed, (AS\ref{AS-spectralgap}) implies that $(X_n)_{n\in\N}$ is ergodic and that there is a solution $\tilde \xi\in\L^2(\pi)$ to the Poisson equation: $\tilde \xi - P\tilde \xi = \xi$. Then,  the difference martingale method of \cite{Gor78} can be used to obtain the CLT. 
\end{rem}

\begin{rem}[Uniform ergodicity] \label{clt-discret-doeblin} 
Recall that the Markov chain $(X_n)_{n\in \N}$ is said to be uniformly ergodic if $\lim_{n\r+\infty}\|P^n-\Pi\|_\infty=0$. This property implies (AS\ref{AS-spectralgap}) (but is stronger) and is fulfilled if and only if $(X_n)_{n\in \N}$ is ergodic, aperiodic and satisfies the Doeblin condition ($D_0$). In addition, for an aperiodic and ergodic Markov chain $(X_n)_{n\in\N}$, Doeblin's condition is equivalent to the uniform mixing (or $\phi$-mixing) property, and then, the $\phi$-mixing coefficients go to 0 at least exponentially fast (see \cite{Ros71,Bra05}). 

Set $\xi_n := Y_n-Y_{n-1}$. If $(X_n)_{n\in \N}$ is uniformly ergodic and if  $\E_{\pi,0}[Y_1^2]<\infty$, then the real number $\sigma^2 =  \E_{\pi,0}[\xi_1^2] + 2 \sum_{\ell=1}^{+\infty}\E_{\pi,0}[\xi_1\, \xi_\ell]$ is well-defined in $[0,+\infty)$. If $\sigma^2 > 0$, then the sequence $(Y_n/\sqrt n)_{n\in\N}$ converges in  distribution to $\cN(0,\sigma^2)$ \cite{GriOpr76}. This  CLT is established  as follows:  the stationarity and the uniform  ergodicity of $(X_n)_{n\in \N}$ extend to the sequence $(\xi_n)_{n\in \N}$, and  the $\phi$-mixing coefficients of $(\xi_n)_{n\in \N}$ also go to 0 at an exponential rate (see \cite[Rk.~4, Lem.~1]{GriOpr76}). The proof is completed using \cite[Th.18.5.2]{IbrLin71}. 
\end{rem}

The CLT for a continuous-time MAP $(X_t,Y_t)_{t \ge 0}$ is deduced from the discrete-time statement. 
\begin{thm} \label{CLT_tpscont}Under (AS\ref{AS1}-AS\ref{AS-spectralgap}) and (AS\ref{AS-alpha}) with $\alpha= 2$,  $(Y_t/\sqrt{t})_{t>0}$ satisfies a CLT.
\end{thm}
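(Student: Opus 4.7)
The plan is to reduce the continuous-time CLT to the discrete-time statement Theorem~\ref{clt-discret-multi} by sampling $(X_t,Y_t)_{t\geq 0}$ at integer times. Observe first that $(X_n,Y_n)_{n\in\N}$ is itself a discrete-time MAP, since the additivity relation (\ref{Add_pro}) passes to the subfamily $(Q_n)_{n\in\N}$ of the semigroup. It also inherits our three assumptions: (AS\ref{AS1}) because $X_0\sim\pi$ gives $X_n\sim\pi$ for every $n$; (AS\ref{AS-spectralgap}) because the convergence $\|P_t-\Pi\|_2\to 0$ taken along integers shows that $P:=P_1$ satisfies $\|P^n-\Pi\|_2=\|P_n-\Pi\|_2\to 0$; and (\ref{AS3d}) for $\alpha=2$, namely $\E_{\pi,0}[|Y_1|^2]<\infty$, which is a direct consequence of (AS\ref{AS-alpha}) evaluated at $t=1$. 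Theorem~\ref{clt-discret-multi} therefore yields $Y_n/\sqrt n \Rightarrow \cN(0,\Sigma)$ with $\Sigma=\lim_n n^{-1}\E_{\pi,0}[Y_nY_n^*]$.

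Next, I would handle arbitrary $t>0$ by writing $n:=\lfloor t\rfloor$ and $s:=t-n\in[0,1)$ and decomposing
\[\frac{Y_t}{\sqrt t}\; =\; \sqrt{\frac{n}{t}}\,\frac{Y_n}{\sqrt n}\; +\; \frac{Y_t-Y_n}{\sqrt t}.\]
Since $\sqrt{n/t}\to 1$ as $t\to+\infty$, Slutsky's lemma shows that the first summand converges in distribution to $\cN(0,\Sigma)$. For the remainder, the stationarity of the increments (Corollary~\ref{Cor_statio}, formula~(\ref{Acc_stationnaire})) gives $Y_t-Y_n\stackrel{d}{=} Y_s$, and Markov's inequality combined with the uniform moment bound of (AS\ref{AS-alpha}) yields
\[\P\!\left(\frac{|Y_t-Y_n|}{\sqrt t}>\varepsilon\right)\;\leq\;\frac{1}{\varepsilon^2 t}\,\sup_{u\in(0,1]}\E_{\pi,0}[|Y_u|^2]\;\xrightarrow[t\to+\infty]{}\;0.\]
Hence $(Y_t-Y_n)/\sqrt t\to 0$ in probability, and another application of Slutsky's lemma gives $Y_t/\sqrt t\Rightarrow\cN(0,\Sigma)$.

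There is no genuine obstacle here beyond bookkeeping: the main point requiring care is the verification that the sampled chain inherits each of (AS\ref{AS1}--AS\ref{AS-spectralgap}) and that (\ref{AS3d}) is exactly what (AS\ref{AS-alpha}) provides at integer times. A minor auxiliary remark is that one may equally well express the limiting covariance in continuous-time form, $\Sigma=\lim_{t\to+\infty}t^{-1}\E_{\pi,0}[Y_tY_t^*]$, since the difference between $t^{-1}\E_{\pi,0}[Y_tY_t^*]$ and $n^{-1}\E_{\pi,0}[Y_nY_n^*]$ is controlled by the same moment bound and stationary increment argument used above.
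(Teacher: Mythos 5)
Your proof is correct and follows essentially the same route as the paper: sample the MAP at integer times to get a discrete-time MAP satisfying (AS1--AS2) and (AS3d), apply Theorem~\ref{clt-discret-multi}, and kill the remainder $(Y_t-Y_{\lfloor t\rfloor})/\sqrt{t}$ via stationarity of increments and Chebyshev's inequality under the uniform moment bound of (AS3). The only difference is presentational (you spell out the inheritance of the assumptions and Slutsky's lemma, which the paper leaves implicit).
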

\begin{proof}{}
 For $t\in[0,+\infty)$, we set $t=n+v$, where $n$ is the integer part of $t$ and $v\in[0,1)$. We can write:
\begin{equation} \label{decompo}
	\frac{Y_t}{\sqrt{t}} = \frac{(Y_t - Y_{n})}{\sqrt{t}} + \frac{\sqrt{n}}{\sqrt{t}} \frac{Y_{n}}{\sqrt{n}} . 
\end{equation}
Recall that $(Q_t)_{t\ge 0}$ is the transition semigroup of $(X_t,Y_t)_{t \ge 0}$. It is easily checked that the MAP $(X_t,Y_t)_{t \ge 0}$ ``sampled'' at discrete instants, $(X_n,Y_n)_{n \in \N}$, is a discrete-time MAP with transition kernel $Q:= Q_1$ which satisfies (AS\ref{AS1}-AS\ref{AS-spectralgap}) and (\ref{AS3d}). 
 Therefore,   $(Y_{n}/\sqrt{n})_{n\in\N}$ satisfies a CLT thanks to Theorem~\ref{clt-discret-multi}. Finally, the sequence $((Y_t - Y_{n})/\sqrt{t})_{t >0}$  converges in probability to 0 from the Tchebychev inequality and condition (\ref{AS3d}): 
\begin{eqnarray*}
	\P_{\pi,0}\big\{|Y_t - Y_{n}| > \sqrt{t} \varepsilon \big\} & = & \P_{\pi,0}\big\{|Y_{v}| > \sqrt{t} \varepsilon \big\} \quad \text{(stationary increments)} \\
	& \le &  \frac{\E_{\pi,0}\big[| Y_{v}|^2\big] }{t\varepsilon^2}  \le  \frac{\sup_{v\in(0,1]}\E_{\pi,0}\big[|Y_v|^2\big] }{t\varepsilon^2} \xrightarrow[t\r +\infty]{} 0.
\end{eqnarray*}
Therefore, $(Y_t/\sqrt{t})_{t>0}$ satisfies a CLT from (\ref{decompo}). 
\end{proof}

\begin{rem}[FCLT] 
Proposition~\ref{mixing} allows us to deduce from \cite[Th. 19.2]{Bil95} that a functional central limit theorem also holds ($d=1$). That is, under the assumptions of Theorem~\ref{clt-discret-multi}, we have: 
	\begin{equation} \label{FCLT}
	 \left(\frac{Y_{\lfloor n t \rfloor}}{\sqrt{n}}\right)_{t\ge 0} \xrightarrow[n\r +\infty]{\cL} (B_t)_{t\ge 0}
\end{equation}
as random elements of $D[0,\infty)$, the Skorokhod space of cadlag functions on $\R_+$, and where $(B_t)_{t\ge 0}$ is a Brownian motion with zero drift and some variance parameter. Let us give some comments on the FCLT relevant to our context.
\begin{enumerate}[(a)]
\item  The case of a discrete-time MAP $(X_n,Y_n)_{n\in\N}$ with $(X_n)_{n\in\N}$ satisfying the Doeblin condition  is covered by \cite{GriOpr76}.  $(Y_n)_{n\in\N}$ is shown to be $\phi$-mixing and a FCLT 
for $\phi$-mixing sequences is used.  
We extend their approach to our case of $\L^2$-spectral gap.

	\item Under (AS\ref{AS1}-AS\ref{AS-spectralgap}) and the expected moment condition of order 2, Maigret \cite{Mai78} has established a FCLT for $(Y_n :=  
\xi(X_{n-1},X_n))_{n\in\N^*}$ in the specific case where $(X_n)_{n\in\N}$ is Harris-recurrent. It is worth noticing that Condition~(AS\ref{AS-spectralgap}) cannot be compared with the Harris-recurrence property.

\item If $(X_t)_{t\ge 0}$ is a stationary ergodic Markov process with a strongly continuous transition semigroup $(P_t)_{t\ge 0}$ on $\L^2(\pi)$, the following convergence holds for any $f\in\L^2(\pi)$ such that $\pi(f)=0$ \cite[Th. 2.1, Prop. 2.3]{Bha82} (see also \cite{Tou83} in the Harris-recurrent case): 
	\[ \left(\frac{1}{\sqrt{n}} \int_0^{nt}\xi(X_s) ds \right)_{t\ge 0} \xrightarrow[n\r +\infty]{\cL} (B_t)_{t\ge 0}
\]
where $(B_t)_{t\ge 0}$ is a Brownian motion with zero drift and some variance parameter.
Set $Y_t :=  \int_0^{t}\xi(X_s) ds$. Since $\xi\in\L^2(\pi)$, we have $\E_{\pi,0}\big[|Y_t|^{2}\big]\le \pi(|\xi|^2)$ for every $t\in (0,1]$, thus (AS\ref{AS-alpha}) with $\alpha=2$ is true.  
Then, the convergence result above is easily deduced from (\ref{FCLT}) using the discrete-time stationary MAP $(X_n,Y_n)_{n\ge 1}$ introduced in the proof of Theorem~\ref{CLT_tpscont}. 

\item Glynn and Whitt  deal with the integral functional of a regenerative process in \cite{GlyWhi93,GlyWhi02}. Their results apply to a Markov process which is a specific instance of a  regenerative process. Conditions for the CLT (FCLT) to hold are expressed in terms of a second moment on the increments $Y_{T_1}:=\int_0^{T_1} \xi(X_s) ds$ of the process $(Y_t)_{t> 0}$ over a regeneration cycle of length $T_1$ (and an additional condition of negligeability in probability of $\sup_{0\le s \le T_1}|Y_{s}|$). The fact that we only consider the Markov case makes our conditions easier to check.

\end{enumerate}
\end{rem}

\section{Refinements of the central limit theorem for MAPs} \label{Section_Rafinements}

Let $(X_t,Y_t)_{t\in \T}$ be a MAP taking values in $\X\times\R^d$. The canonical scalar product on $\R^d$ is denoted by $\langle\cdot,\cdot\rangle$. The Fourier operators associated with $(X_t,Y_t)_{t\in \T}$ are introduced in the next subsection and  are shown to satisfy a semigroup property. In the discrete-time case, precise expansions of the characteristic function of the additive component $Y_t$ can be deduced from \cite{HerPen09} under (AS\ref{AS-spectralgap}). They are central to the derivation of our limit theorems in this section. Limit theorems are  first considered for discrete-time MAPs. A local limit theorem, a Berry-Esseen bound and a first-order Edgeworth expansion are obtained.
The continuous-time case is addressed thanks to the basic reduction to the discrete-time case used for the CLT.

\subsection{Fourier operators. A semigroup property} \label{SS_Fourier} 

For any $t\in\T$ and $\zeta\in\R^d$, we consider the linear operator $S_t(\zeta)$ acting (in a first step) on the space of bounded measurable functions $f : \X\r\C$ as follows:  
\begin{equation} \label{Fourier}
	\forall x\in \X,\ \ \ \big(S_t(\zeta)f\big)(x) := \E_{(x,0)}\big[e^{i \langle \zeta , Y_t \rangle}\, f(X_t)\big]. 
\end{equation}
Note that $S_t(0) = P_t$. In the discrete-time case, $S_1(\zeta)$ corresponds to the Fourier operator which was first introduced by Nagaev \cite{Nag57} in the special case when $Y_n = \sum_{k=1}^n\xi(X_k)$ (see \cite{GuiHar88,HenHer01} and the reference therein), and was  extended  to discrete-time MAPs in \cite{guivarch,bab} to prove local limit theorems and renewals theorems (see also \cite{FuhLai01}). All these works are based on the following formula (see Proposition~\ref{pro-semi-grpe} below): 
\begin{equation} \label{semi-gpe-discret}
\forall \zeta\in\R^d, \forall n\in\N,\qquad \big(S_n(\zeta)f\big)(x) := \E_{(x,0)}\big[e^{i \langle \zeta , Y_n \rangle}\, f(X_n)\big] = \big(S_1(\zeta)^n f\big)(x).
\end{equation}
This formula clearly reads as the semigroup property: $S_{m+n}(\zeta) = S_m(\zeta) \circ S_n(\zeta)$. In the continuous-time, it seems that the operators $S_t(\zeta)$ were first introduced in \cite{HitShi70} for investigating AFs of continuous-time Markov processes on a compact metric state space $\X$. In \cite{HitShi70}, $P_t$ was assumed to have a spectral gap on the space of all  continuous $\C$-valued functions on $\X$, and $(S_t(\zeta))_{t>0}$ was thought of as a semigroup (see (\ref{semi-group}) below) on this space.  

Here, in view of (AS\ref{AS-spectralgap}),  the above mentioned semigroup property has to be considered on the Lebesgue spaces $\L^p(\pi)$ ($1\le p \le \infty$). 
\begin{pro} \label{pro-semi-grpe} 
For all $t\in\T$ and $\zeta\in\R^d$, $S_t(\zeta)$ defines a linear contraction on $\L^p(\pi)$, and we have:  
\begin{equation} \label{semi-group}
\forall \zeta\in\R^d, \quad \forall(s,t)\in\T^2,\ \ S_{t+s}(\zeta) = S_t(\zeta) \circ S_s(\zeta) \tag{SG}.
\end{equation}
In particular, Relation~(\ref{semi-gpe-discret}) holds for all $f\in\L^p(\pi)$. 
\end{pro}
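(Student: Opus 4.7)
The plan is to handle the two assertions separately, proving the contraction property first (which also extends $S_t(\zeta)$ from bounded measurable functions to $\L^p(\pi)$), and then proving the semigroup identity by conditioning on $\cF_s^{(X,Y)}$ and invoking Lemma~\ref{BasicFormula}.

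For the contraction property, I would start from the defining formula (\ref{Fourier}) and take absolute values: for any bounded measurable $f:\X\to\C$,
\begin{equation*}
\bigl| \bigl(S_t(\zeta)f\bigr)(x) \bigr| \leq \E_{(x,0)}\bigl[\, |f(X_t)|\, \bigr] = (P_t |f|)(x)
\end{equation*}
since $|e^{i\langle\zeta,Y_t\rangle}|=1$. Because $P_t$ is a contraction on $\L^p(\pi)$ for every $p\in[1,+\infty]$ (as recalled at the beginning of Section~\ref{Section_L2}), it follows that $\|S_t(\zeta)f\|_p \leq \|\,|f|\,\|_p = \|f\|_p$. This pointwise domination extends $S_t(\zeta)$ by density/continuity to a well-defined linear contraction on each $\L^p(\pi)$ (representative-independence on $\L^p(\pi)$ is guaranteed by the invariance of $\pi$ under $P_t$, since $\E_{\pi,0}[|f(X_t)|^p]=\pi(|f|^p)$).

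For the semigroup property (\ref{semi-group}), I would first establish it on bounded measurable functions, then extend by contraction. Fix $f$ bounded measurable, $\zeta\in\R^d$, and write
\begin{equation*}
e^{i\langle\zeta,Y_{s+t}\rangle} = e^{i\langle\zeta,Y_s\rangle}\cdot e^{i\langle\zeta,Y_{s+t}-Y_s\rangle},
\end{equation*}
so that by conditioning,
\begin{equation*}
\bigl(S_{s+t}(\zeta)f\bigr)(x) = \E_{(x,0)}\Bigl[\, e^{i\langle\zeta,Y_s\rangle}\,\E_{(x,0)}\bigl[\,e^{i\langle\zeta,Y_{s+t}-Y_s\rangle}\,f(X_{s+t})\,\bigm|\, \cF_s^{(X,Y)}\bigr]\,\Bigr].
\end{equation*}
Applying formula (\ref{Accroissements}) of Lemma~\ref{BasicFormula} to the function $g(x',y'):=e^{i\langle\zeta,y'\rangle}f(x')$ gives
\begin{equation*}
\E_{(x,0)}\bigl[\,e^{i\langle\zeta,Y_{s+t}-Y_s\rangle}\,f(X_{s+t})\,\bigm|\,\cF_s^{(X,Y)}\bigr] = \E_{(X_s,0)}\bigl[\,e^{i\langle\zeta,Y_t\rangle}\,f(X_t)\,\bigr] = \bigl(S_t(\zeta)f\bigr)(X_s).
\end{equation*}
Substituting back yields $\bigl(S_{s+t}(\zeta)f\bigr)(x) = \E_{(x,0)}\bigl[e^{i\langle\zeta,Y_s\rangle}\,(S_t(\zeta)f)(X_s)\bigr] = \bigl(S_s(\zeta)\,S_t(\zeta)f\bigr)(x)$, which is exactly (\ref{semi-group}) (the symmetric identity follows by exchanging $s$ and $t$). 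Formula (\ref{semi-gpe-discret}) is then the discrete-time iteration of this semigroup property.

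The only subtle point is the extension from bounded measurable functions to $\L^p(\pi)$: since both $S_{s+t}(\zeta)$ and $S_s(\zeta)\circ S_t(\zeta)$ are bounded operators on $\L^p(\pi)$ by the first part, and they agree on the dense subspace of bounded functions, they agree on all of $\L^p(\pi)$. I do not foresee any genuine obstacle here; the only thing to watch is that $g$ in the application of Lemma~\ref{BasicFormula} is bounded (hence integrable under any initial law), which is why I first work with bounded $f$ before extending.
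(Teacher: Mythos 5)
Your proof is correct and follows essentially the same route as the paper: both condition on $\cF_s^{(X,Y)}$, factor $e^{i\langle\zeta,Y_{s+t}\rangle}=e^{i\langle\zeta,Y_s\rangle}e^{i\langle\zeta,Y_{s+t}-Y_s\rangle}$, and invoke Lemma~\ref{BasicFormula} to identify the conditional expectation with $(S_t(\zeta)f)(X_s)$ (the paper uses formula (\ref{Etat}) together with $g_{Y_s}=e^{i\langle\zeta,Y_s\rangle}g$, while you use the equivalent increment formula (\ref{Accroissements}) after factoring). Your treatment of the contraction property and the density extension is more explicit than the paper's, which simply declares the first assertion ``easy to prove'' and works directly with $f\in\L^p(\pi)$.
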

\begin{proof}{}
 The first assertion is easy to prove.  Next, for any $\zeta\in\R^d$ and $f\in\L^p(\pi)$, let us set: $g(x,y) := f(x)\, e^{i \langle \zeta , y \rangle }$ with $x\in \X$ and $y\in\R^d$. Then, using the Markov property and Lemma~\ref{BasicFormula}:	
 \begin{eqnarray*}
\lefteqn{(S_{t+s}(\zeta)f)(x) := \E_{(x,0)}\big[ e^{i \langle \zeta, Y_{t+s}\rangle}f(X_{t+s}) \big]}\\
& = & \E_{(x,0)}\big[ \E_{(x,0)}[ e^{i \langle \zeta, Y_{t+s}\rangle}f(X_{t+s})\mid \cF_s^{(X,Y)} ] \big]= 
\E_{(x,0)}\big[ (Q_t g_{Y_s})(X_s,0) \big] \\
& = & \E_{(x,0)}[ e^{i\langle \zeta,Y_s\rangle} (Q_t g)(X_s,0)] = \E_{(x,0)}\left[  e^{i\langle \zeta,Y_s\rangle})  \E_{X_s,0}[ f(X_{t}) e^{i\langle \zeta,Y_t\rangle} ]\right] \\
& = & \E_{(x,0)}\big[  e^{i\langle \zeta,Y_s\rangle}  (S_{t}(\zeta)f)(X_s)\big] = \left(S_{s}(\zeta)\big(S_{t}(\zeta)f\big)\right)(x)
\end{eqnarray*}
the third equality results from: $g_{Y_s}(x,y) = f(x)\, e^{i \langle \zeta, (y+Y_s) \rangle } =  e^{i \langle \zeta , Y_s \rangle }\, g(x,y)$.
This gives the semigroup property (\ref{semi-group}). The last assertion is obvious. 
\end{proof}

\subsection{Expansions of the characteristic function of the additive component} \label{SS_expansions}

Here we assume that $(X_n,Y_n)_{n\in \N}$ is a discrete-time MAP taking values in $\X\times\R^d$ (possibly derived from a continuous-time MAP) such that the driving Markov chain $(X_n)_{n\in \N}$ is stationary and satisfies (AS\ref{AS-spectralgap}). This last property ensures that $S_1(0)$ has good spectral properties, and the iterates $S_1(\zeta)^n$ occurring in (\ref{semi-gpe-discret}) are studied using the Nagaev-Guivarc'h spectral method which consists in applying the perturbation theory to the Fourier operators $S_1(\zeta)$ for small $\zeta$. However using the standard perturbation theorem requires strong assumptions on $Y_1$. Here we shall appeal to the weak spectral method introduced in \cite{HenHer04} and based on the Keller-Liverani perturbation theorem \cite{KelLiv99}. This method is fully developed in the Markov framework in \cite[see references therein]{HerPen09}. 
In the sequel, $F^{(\ell)}$ denotes the partial derivative of order $\ell$ of a $\C$-valued function $F$ defined on an open subset of $\R^d$.

Conditions (AS\ref{AS1}-AS\ref{AS-spectralgap}) are assumed to hold throughout the subsection. 

\begin{pro} \label{carac-en-0-disc}
Let $m_0\in\N^*$. Under condition (\ref{AS3d}) for some 
$\alpha>m_0$, there exists a bounded open neighborhood ${\cal O}$ of $\zeta=0$ in $\R^d$ such that we have for all $f\in\L^{s}(\pi)$ with any $s>\frac{\alpha}{\alpha-m_0}$:  
\begin{equation}\label{egalite_fcara_casdiscret}
\forall n\in\N,\ \forall \zeta\in{\cal O},\ \ \ \E_{\pi,0}\big[e^{i\, \langle \zeta , Y_n \rangle}\, f(X_n)\big] = \lambda(\zeta)^n\, L(\zeta,f) + R_n(\zeta,f),
\end{equation}
where $\lambda(\cdot)$, $L(\cdot,f)$, $R_n(\cdot,f)$ are $\C$-valued functions of class $\cC^{m_0}$ on ${\cal O}$, with $\lambda(0)=1$ and $L(0,f) = \pi(f)$. Moreover, we have the following properties for $\ell=0,\ldots,m_0$: 
\begin{subequations}
\begin{eqnarray}
&- & \sup_{\zeta\in{\cal O}} |L^{(\ell)}(\zeta,f)|<\infty \label{L-discret} \\
&- & \exists \kappa\in(0,1),\ \ \sup_{\zeta\in{\cal O}} |R_n^{(\ell)}(\zeta,f)| = O(\kappa^n). \label{R-discret} 
\end{eqnarray}
\end{subequations}
If $f:=1_{\X}$, we have $R_n(0,1_{\X}) = 0$. 
\end{pro}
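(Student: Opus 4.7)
The starting point is the semigroup identity $S_n(\zeta)=S_1(\zeta)^n$ from Proposition~\ref{pro-semi-grpe}, which yields
\begin{equation*}
\E_{\pi,0}\bigl[e^{i\langle\zeta,Y_n\rangle}f(X_n)\bigr]=\pi\bigl(S_1(\zeta)^n f\bigr).
\end{equation*}
At $\zeta=0$ we have $S_1(0)=P$, and (AS\ref{AS-spectralgap}) together with (\ref{Expo}) provide the spectral decomposition of $P$ on $\L^2(\pi)$: the eigenvalue $1$ is simple and isolated with rank-one projector $\Pi$, and the rest of the spectrum lies in a closed disk of some radius $\kappa_0<1$. The plan is to transfer this spectral gap to the perturbed operator $S_1(\zeta)$ for $\zeta$ in a small neighbourhood $\cO$ of $0$, obtaining a simple dominant eigenvalue $\lambda(\zeta)$ near $1$ with rank-one eigenprojector $\Pi(\zeta)$, and then to set $L(\zeta,f):=\pi(\Pi(\zeta)f)$ and $R_n(\zeta,f):=\pi(N(\zeta)^n f)$ where $N(\zeta):=S_1(\zeta)-\lambda(\zeta)\Pi(\zeta)$.

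The hard part is that when $Y_1$ is unbounded the map $\zeta\mapsto S_1(\zeta)$ is typically \emph{not} norm-continuous on $\L^2(\pi)$, so classical Kato perturbation theory is unavailable. I would therefore invoke the Keller--Liverani theorem in the weak-spectral formulation of \cite{HerPen09}, working with the continuous injection $\L^s(\pi)\hookrightarrow\L^2(\pi)$. The key estimate, for $f\in\L^s(\pi)$, is of H\"older type:
\begin{equation*}
\bigl\|\bigl(S_1(\zeta)-S_1(0)\bigr)f\bigr\|_2 \;\le\; C\,|\zeta|^{\eta}\,\|f\|_s,
\end{equation*}
derived from $|e^{i\langle\zeta,Y_1\rangle}-1|\le 2^{1-\eta}|\zeta|^{\eta}|Y_1|^{\eta}$ combined with H\"older's inequality applied to $|Y_1|^\eta|f(X_1)|$, the exponents satisfying $\eta/\alpha+1/s\le 1$. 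Repeating the argument with the formal derivatives (which bring in factors $(iY_1)^{\otimes\ell}$ for $\ell\le m_0$) yields analogous $\cC^\eta$-bounds provided $(m_0+\eta)/\alpha+1/s\le 1$ for some $\eta>0$; this is precisely $\alpha>m_0$ together with $s>\alpha/(\alpha-m_0)$.

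With this weak continuity at hand, the Keller--Liverani theorem supplies uniform control of the resolvent $(z-S_1(\zeta))^{-1}$ on a fixed contour $\Gamma$ separating $1$ from the remaining spectrum, for every $\zeta\in\cO$. Dunford-calculus formulas then define $\Pi(\zeta)$ and $\lambda(\zeta)$ as contour integrals along $\Gamma$, the H\"older-type bounds on the formal derivatives of $S_1(\zeta)$ promote mere continuity to $\cC^{m_0}$-regularity, and (\ref{L-discret}) follows from uniform boundedness of $\Pi(\zeta)$ and its derivatives on $\cO$. Writing $S_1(\zeta)^n=\lambda(\zeta)^n\Pi(\zeta)+N(\zeta)^n$, the spectral radius of $N(\zeta)$ remains bounded by some $\kappa\in(\kappa_0,1)$ uniformly on $\cO$, so $\|N(\zeta)^n\|_2=O(\kappa^n)$, and differentiating the contour representation of $N(\zeta)^n$ yields the corresponding bound (\ref{R-discret}) for every $\ell\le m_0$. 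Finally, $P1_{\X}=1_{\X}$ forces $\Pi 1_{\X}=1_{\X}$, hence $N(0)1_{\X}=0$ and $R_n(0,1_{\X})=0$.

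The main technical obstacle is thus the verification of the Keller--Liverani hypotheses in this unbounded-$Y_1$ setting. The uniform $\L^2$-contractivity $\|S_1(\zeta)\|_2\le 1$ is immediate, so the real work lies in the H\"older-type operator estimate above, which is precisely where the moment condition (\ref{AS3d}) and the integrability index $s$ enter quantitatively; once this is in hand, extracting the $\cC^{m_0}$-regularity of $\lambda$, $L$ and $R_n$ with the stated bounds reduces to standard perturbation bookkeeping as in \cite{HerPen09}.
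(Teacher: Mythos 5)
Your proposal is correct and follows essentially the same route as the paper: the Nagaev--Guivarc'h decomposition $S_1(\zeta)^n=\lambda(\zeta)^n\Pi(\zeta)+N(\zeta)^n$ obtained from the Keller--Liverani theorem, with the continuity and $\cC^{m_0}$-differentiability of $\zeta\mapsto S_1(\zeta)$ proved between Lebesgue spaces of decreasing integrability via H\"older's inequality and the moment condition --- this is exactly the content of Lemmas~\ref{cont-noyau-discrete} and \ref{deri-noyau-discrete} and of the appendix, following \cite[Sect.~7.3]{HerPen09}. One slip worth fixing: your displayed key estimate puts an $\L^2(\pi)$-norm on the left, yet your own exponent condition $\eta/\alpha+1/s\le 1$ (and $(m_0+\eta)/\alpha+1/s\le 1$ for the derivatives) corresponds to landing in $\L^1(\pi)$; insisting on $\L^2(\pi)$ as the weak space would force roughly $\alpha>2m_0$, so the target space must be allowed to degrade all the way down to $\L^1(\pi)$, as in the paper's chain $\theta_0=s>\theta_1>\cdots>\theta_{2m_0+2}=1$.
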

When $Y_n = \sum_{k=1}^n\xi(X_k)$, the above properties are proved in \cite[Sect.~7.3]{HerPen09} by using (\ref{semi-gpe-discret}) and some operator-type derivation arguments. For a general additive component $Y_n$, the method is the same\footnote{See the beginning of the appendix. In particular, mention that $\lambda(\zeta)$ is the dominant eigenvalue of $S_1(\zeta)$, $L(\zeta,\cdot)$ is related to the associated eigenprojection, and $\kappa$ can be chosen as $\kappa =(e^{-\varepsilon}+1)/2$ where $\varepsilon>0$ is defined in (\ref{Expo}). } using Lemmas~\ref{cont-noyau-discrete} and \ref{deri-noyau-discrete} below which slightly extend \cite[Lem.~4.2,7.4]{HerPen09}. 
Mention that, by using the same lemmas, Proposition~\ref{carac-en-0-disc} can also be  deduced from  \cite{Gou08} which  
specifies the method introduced in \cite{HenHer04,GouLiv06} to prove Taylor expansions of $\lambda(\cdot)$, $L(\cdot,f)$, $R_n(\cdot,f)$
\footnote{
As observed in  \cite{Gou08}, the passage from the Taylor expansions to the differentiability properties can be derived from \cite{Cam64}.
}.

The operator norm in the space $\cL(\L^p,\L^{p'})$ of the linear bounded operators from $\L^p(\pi)$ to $\L^{p'}(\pi)$ is denoted by $\|\cdot\|_{p,p'}$.
\begin{lem} \label{cont-noyau-discrete}
If $1\leq p' < p$, then the map $\zeta\mapsto S_1(\zeta)$ is  continuous from $\R^d$ to $\cL(\L^{p},\L^{p'})$.
\end{lem}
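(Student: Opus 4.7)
The plan is to bound the $\cL(\L^p,\L^{p'})$-norm of $S_1(\zeta)-S_1(\zeta_0)$ by an explicit quantity that manifestly tends to $0$ as $\zeta\to\zeta_0$, using Jensen, Hölder, stationarity, and dominated convergence. The gap $p'<p$ is what creates room for Hölder, and this is what makes the statement hold without any moment assumption on $Y_1$.

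First I would fix $f\in\L^p(\pi)$ and set $g_\zeta(x):=(S_1(\zeta)-S_1(\zeta_0))f\,(x)=\E_{(x,0)}\bigl[(e^{i\langle\zeta,Y_1\rangle}-e^{i\langle\zeta_0,Y_1\rangle})f(X_1)\bigr]$. Since $p'\geq 1$, Jensen's inequality applied to the conditional expectation gives $|g_\zeta(x)|^{p'}\leq \E_{(x,0)}\!\left[|e^{i\langle\zeta,Y_1\rangle}-e^{i\langle\zeta_0,Y_1\rangle}|^{p'}\,|f(X_1)|^{p'}\right]$. Integrating against $\pi$ and using Fubini (or equivalently, the definition of $\E_{\pi,0}$),
\begin{equation*}
\|g_\zeta\|_{p'}^{p'}\leq \E_{\pi,0}\!\left[\bigl|e^{i\langle\zeta,Y_1\rangle}-e^{i\langle\zeta_0,Y_1\rangle}\bigr|^{p'}\,|f(X_1)|^{p'}\right].
\end{equation*}

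Next, since $p'<p$, I would apply Hölder's inequality with the conjugate pair $r=p/p'>1$ and $r'=p/(p-p')$. This separates the oscillating factor from $f$:
\begin{equation*}
\|g_\zeta\|_{p'}^{p'}\leq \E_{\pi,0}\!\left[\bigl|e^{i\langle\zeta,Y_1\rangle}-e^{i\langle\zeta_0,Y_1\rangle}\bigr|^{\frac{pp'}{p-p'}}\right]^{\!\frac{p-p'}{p}} \E_{\pi,0}\!\left[|f(X_1)|^{p}\right]^{\!\frac{p'}{p}}.
\end{equation*}
By stationarity of $(X_n)_{n\in\N}$ under (AS\ref{AS1}), $\E_{\pi,0}[|f(X_1)|^{p}]=\pi(|f|^p)=\|f\|_p^{p}$, hence
\begin{equation*}
\|S_1(\zeta)-S_1(\zeta_0)\|_{p,p'}\leq \E_{\pi,0}\!\left[\bigl|e^{i\langle\zeta,Y_1\rangle}-e^{i\langle\zeta_0,Y_1\rangle}\bigr|^{\frac{pp'}{p-p'}}\right]^{\!\frac{p-p'}{pp'}}.
\end{equation*}

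Finally I would let $\zeta\to\zeta_0$: the integrand converges to $0$ pointwise and is dominated by the constant $2^{pp'/(p-p')}$, so the Lebesgue dominated convergence theorem yields the right-hand side $\to 0$, which establishes continuity of $\zeta\mapsto S_1(\zeta)$ in $\cL(\L^p,\L^{p'})$.

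The only even mildly delicate point is the bookkeeping in the Hölder step, i.e.\ choosing the conjugate exponent $r=p/p'$ so that after taking the $p'$th power inside and raising to $1/r$ one recovers exactly the $\L^p$-norm of $f$; everything else is routine. Note that no moment condition on $Y_1$ is needed, and the argument fails precisely when $p=p'$, which is consistent with the strict inequality in the hypothesis.
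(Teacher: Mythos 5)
Your proof is correct and follows essentially the same route as the paper: Jensen on the conditional expectation, Hölder with exponents $p/p'$ and $p/(p-p')$ to peel off $\|f\|_p$, stationarity of $\pi$, and dominated convergence. The only cosmetic difference is that the paper first factors the difference as $e^{i\langle\zeta_0,Y_1\rangle}(e^{i\langle\zeta-\zeta_0,Y_1\rangle}-1)$ and bounds it by $2\min\{1,|\langle\zeta-\zeta_0,Y_1\rangle|\}$ before applying Lebesgue's theorem, whereas you dominate directly by the constant $2^{pp'/(p-p')}$; both are valid.
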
 
\begin{proof}{}
 We have for $\zeta\in \R^d$, $\zeta_0\in \R^d$ and $f\in \L^p(\pi)$, thanks to H\"older's inequality
\begin{eqnarray*}
\big|(S_1(\zeta)-S_1(\zeta_0)) f(x)\big|^{p'}
&=&\left|\E_{(x,0)}\big[e^{i\, \langle \zeta , Y_1 \rangle}\, f(X_1)\big]-\E_{(x,0)}\big[e^{i\, \langle \zeta_0 , Y_1 \rangle}\, f(X_1)\big]\right|^{p'} \\
&\leq &\E_{(x,0)}\left[\big| e^{i\, \langle \zeta-\zeta_0 , Y_1 \rangle}-1\big|^{p'}|f(X_1)|^{p'}\right] \\
&\leq &2^{p'} \E_{(x,0)}\left[\min\big\{1,| \langle \zeta-\zeta_0 , Y_1 \rangle|\big\}^{p'}\, |f(X_1)|^{p'} \right] ,
\end{eqnarray*}
the last inequality resulting from the classic inequality $|e^{ia}-1|\leq 2 \min\big\{1,|a|\big\}$. An integration with respect to $\pi$ and the use of H\"older's inequality give 
\begin{equation*}
\begin{split}
\pi\big(|(S_1(\zeta)-S_1(\zeta_0))  f|^{p'}\big)
&\leq 2^{p'}  \E_{\pi,0}\big[\min\big\{1,| \langle \zeta-\zeta_0 , Y_1 \rangle|\big\}^{(pp')/(p-p')}\big]^{(p-p')/p} \,
\E_{\pi,0}\big[ |f(X_1)|^{p} \big]^{p'/p} \\
&\leq 2^{p'}  \left\|\min\big\{1,| \langle \zeta-\zeta_0 , Y_1 \rangle|\big\}\right\|_{(pp')/(p-p')}^{p'} \,
\|f\|_{p}^{p'} ,
\end{split}
\end{equation*}
since $\pi$ is invariant. Thus, we deduce  that  
$\|S_1(\zeta)-S_1(\zeta_0) \|_{p,p'} \leq 2 \big\|\min\big\{1,| \langle \zeta-\zeta_0 , Y_1 \rangle|\big\}\big\|_{(pp')/(p-p')}$ goes to $0$ when $|\zeta-\zeta_0| \r 0$ from Lebesgue's theorem. 
\end{proof}
\begin{lem} \label{deri-noyau-discrete} 
Assume that (\ref{AS3d}) holds for some $\alpha>m_0$ ($m_0\in\N^*$), and let $1\leq j \leq m_0$. 
If $p > 1$ and $p_j :=\alpha p/(\alpha+jp) \ge 1$, then $\zeta\mapsto S_1(\zeta)$ is $j$-times continuously differentiable from $\R^d$ to $\cL(\L^{p},\L^{p_j})$, and  $\sup_{\zeta\in\R^d}\|S_1^{(j)}(\zeta)\|_{p,p_j} \leq \E_{\pi,0}[|Y_1|^{\alpha}]^{j/\alpha}$. 
\end{lem}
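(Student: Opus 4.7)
The plan is to identify the $j$-th Fr\'echet derivative of $S_1(\zeta)$ explicitly, establish the norm bound via a combined Jensen--H\"older argument, and verify differentiability inductively in the spirit of Lemma~\ref{cont-noyau-discrete}. Introduce, for $\zeta\in\R^d$, $h_1,\ldots,h_j\in\R^d$, $f\in\L^{p}(\pi)$, and $x\in\X$, the candidate
\[
\big(T_j(\zeta)(h_1,\ldots,h_j)f\big)(x)\,:=\,i^{\,j}\,\E_{(x,0)}\!\Big[\textstyle\prod_{\ell=1}^{j}\langle h_\ell,Y_1\rangle\cdot e^{i\langle\zeta,Y_1\rangle}f(X_1)\Big],
\]
obtained by formally differentiating $j$ times under the expectation in~(\ref{Fourier}). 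The pointwise bound $|T_j(\zeta)(h_1,\ldots,h_j)f(x)|\leq |h_1|\cdots|h_j|\,\E_{(x,0)}[|Y_1|^{j}|f(X_1)|]$, combined with Jensen's inequality (applicable since $p_j\geq 1$) and the $P_1$-invariance of $\pi$, yields
\[
\|T_j(\zeta)(h_1,\ldots,h_j)f\|_{p_j}^{p_j}\,\leq\, |h_1|^{p_j}\cdots|h_j|^{p_j}\,\E_{\pi,0}\!\big[|Y_1|^{jp_j}|f(X_1)|^{p_j}\big].
\]
The key algebraic observation is that the defining relation $1/p_j=j/\alpha+1/p$ is equivalent to $\alpha/(jp_j)$ and $p/p_j$ being conjugate exponents; H\"older's inequality with these exponents then gives $\E_{\pi,0}[|Y_1|^{jp_j}|f(X_1)|^{p_j}]\leq \E_{\pi,0}[|Y_1|^\alpha]^{jp_j/\alpha}\|f\|_p^{p_j}$, and hence $\|T_j(\zeta)\|_{p,p_j}\leq \E_{\pi,0}[|Y_1|^\alpha]^{j/\alpha}$, as announced.

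To identify $T_j(\zeta)$ with $S_1^{(j)}(\zeta)$, I would argue by induction on $j\leq m_0$. For $j=1$, a direct computation gives
\[
\big(S_1(\zeta+h)-S_1(\zeta)-T_1(\zeta)\cdot h\big)f(x)=\E_{(x,0)}\!\big[e^{i\langle\zeta,Y_1\rangle}\big(e^{i\langle h,Y_1\rangle}-1-i\langle h,Y_1\rangle\big)f(X_1)\big],
\]
and the elementary inequality $|e^{ia}-1-ia|\leq \min(a^2,2|a|)$ gives a pointwise estimate bounded by $|h|\cdot\E_{(x,0)}[\min(|h||Y_1|^{2},2|Y_1|)|f(X_1)|]$. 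The integrand $|Y_1|\cdot\min(|h||Y_1|,2)\cdot|f(X_1)|$ is dominated by $2|Y_1|\cdot|f(X_1)|$ and tends to $0$ pointwise as $h\to 0$; the same Jensen--H\"older computation as in the norm step, together with dominated convergence applied with dominating function $2|Y_1|$ whose $\alpha$-th power is integrable by~(\ref{AS3d}), yields $\|S_1(\zeta+h)-S_1(\zeta)-T_1(\zeta)\cdot h\|_{p,p_1}=o(|h|)$. For the induction step, exactly the same calculation applied to $T_{j-1}(\zeta+h)-T_{j-1}(\zeta)$ produces an extra factor $|Y_1|^{j-1}$ inside the expectation; the dominating function becomes $2|Y_1|^{j}$, whose $(\alpha/j)$-th power equals $2^{\alpha/j}|Y_1|^{\alpha}$, still integrable since $\alpha>j$, and one concludes that $T_{j-1}$ is differentiable with derivative $T_j$, viewed as an operator from $\L^{p}(\pi)$ into $\L^{p_j}(\pi)\supseteq \L^{p_{j-1}}(\pi)$.

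Finally, the continuity of $\zeta\mapsto T_j(\zeta)$ in $\cL(\L^{p},\L^{p_j})$ is obtained by an argument structurally identical to Lemma~\ref{cont-noyau-discrete}: from $|e^{ia}-e^{ib}|\leq 2\min(1,|a-b|)$, the difference $(T_j(\zeta)-T_j(\zeta_0))(h_1,\ldots,h_j)f(x)$ is pointwise bounded by $2|h_1|\cdots|h_j|\,\E_{(x,0)}[|Y_1|^{j}\min(1,|\zeta-\zeta_0||Y_1|)|f(X_1)|]$, and a final Jensen--H\"older plus dominated convergence argument concludes. The main obstacle is essentially bookkeeping: one must verify that the specific choice $p_j=\alpha p/(\alpha+jp)$ yields conjugate H\"older exponents at every level, and check throughout the induction that the dominating functions $|Y_1|^{k}$ lie in the right Lebesgue class---a constraint precisely matched by the hypothesis $\alpha>m_0\geq k$.
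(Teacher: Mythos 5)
Your proof is correct and follows essentially the same route as the paper's: the same explicit candidate for $S_1^{(j)}(\zeta)$, the same Jensen--H\"older computation exploiting that $1/p_j=j/\alpha+1/p$ makes $\alpha/(jp_j)$ and $p/p_j$ conjugate, the same Taylor-remainder estimate via $|e^{ia}-1-ia|\leq 2|a|\min\{1,|a|\}$ with dominated convergence in $\L^{\alpha/j}$, and the same continuity argument. The only (harmless) difference is that you carry out the multilinear bookkeeping for general $d$, whereas the paper reduces to $d=1$ for simplicity.
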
 
\begin{proof}{} 
For the sake of simplicity, we suppose that $d=1$. Below we consider any $\zeta\in \R$, $\zeta_0\in \R$ and $f\in \L^p(\pi)$. For $1\leq j \leq m_0$, define (formally) the following linear operator: 
\begin{equation*} 
	\forall x\in \X,\ \ \ \big(S_1^{(j)}(\zeta)f\big)(x) := \E_{(x,0)}\big[(i Y_1)^j \, e^{i \zeta  Y_1} \, f(X_1)\big]. 
\end{equation*}
First we have: 
\[
|S_1^{(j)}(\zeta) f(x)|^{p_j}
\leq  \E_{(x,0)}\left[|  Y_1|^{jp_j}\, |f(X_1)|^{p_j} \right] \]
so that, from H\"older's inequality,  
\[ \|S_1^{(j)}(\zeta) \|_{p,p_j}
\leq  \E_{\pi,0}\big[|  Y_1|^\alpha \big]^{j/\alpha}  .
\]
Second, define $\Delta:=S_1^{(j-1)}(\zeta)-S_1^{(j-1)}(\zeta_0)- (\zeta-\zeta_0) \ S_1^{(j)}(\zeta_0)$. Then, for $j\in\{1,\ldots,m_0-1\}$, we have thanks to the classic inequality $|e^{ia}-1-ia|\leq 2 |a|\min\big\{1,|a|\big\}$
\[
|\Delta f(x)|^{p_{j}}
\leq 2^{p_{j}} | \zeta-\zeta_0 |^{p_{j}} \E_{(x,0)}\left[\min\big\{1,| (\zeta-\zeta_0 ) Y_1|\big\}^{p_{j}}\, |Y_1|^{jp_{j}} \, |f(X_1)|^{p_{j}} \right]. 
\]
It follows from H\"older's inequality that the operator norm satisfies 
\[ \|\Delta \|_{p,p_{j}}
\leq 2 | \zeta-\zeta_0|\ \left\| \min\big\{1,| (\zeta-\zeta_0 ) Y_1|\big\} \ |Y_1|^{j}\right\|_{\alpha/j} .
\]
This proves that $S_1^{(j-1)}(\cdot)$ is differentiable from $\R$ to $\cL(\L^{p},\L^{p_j})$, and that its derivatives is $S_1^{(j)}$. 
Finally, we obtain:
\[
\big|(S_1^{(j)}(\zeta)-S_1^{(j)}(\zeta_0)) f(x)\big|^{p_j}
\leq 2^{p_j}  \E_{(x,0)}\left[\min\big\{1,| (\zeta-\zeta_0 ) Y_1|\big\}^{ p_j}|Y_1|^{j p_j}\, |f(X_1)|^{p_j} \right] 
\]
from which we deduce  that the operator norm  satisfies
\[ 
\|S_1^{(j)}(\zeta)-S_1^{(j)}(\zeta_0)\|_{p,p_j}
\leq 2  \left\|\min\big\{1,| (\zeta-\zeta_0 ) Y_1|\big\} \ |Y_1|^j\right\|_{\alpha/j} .
\]
Thus $S_1(\cdot)$ is $j$-times continuously differentiable from $\R$ to $\cL(\L^{p},\L^{p_j})$. 

\end{proof}

Next, let us return to our probabilistic context. Let $\nabla$ and $\mathrm{Hess}$ denote the gradient and the Hessian operators  respectively. In the following proposition,  the $d$-dimensional vector $\nabla\lambda(0)$ and the symmetric $d\times d$-matrix $\mathrm{Hess}\lambda(0)$ are related to the mean vector $\E_{\pi,0}[Y_1]$ and the asymptotic covariance matrix associated with the sequence $(Y_n - n\,\E_{\pi,0}[Y_1])/\sqrt{n}$. 
\begin{pro} \label{deri-hess} ~
\begin{enumerate}[(i)] \setlength{\itemsep}{1mm}
\item  If (\ref{AS3d}) holds for some $\alpha>1$,   
then $\nabla\lambda(0) = i\, \E_{\pi,0}[Y_1]$.  
\item If (\ref{AS3d}) holds for some $\alpha>2$, then  the following limit exists in the set of the non-negative symmetric $d\times d$-matrices:  
$$\Sigma \ : =\, \lim_n\frac{1}{n}\, \E_{\pi,0}\left[\, \big(Y_n - n\, \E_{\pi,0}[Y_1]\big) \, \big(Y_n - n\, \E_{\pi,0}[Y_1]\right)^*\, \big] = -\mathrm{Hess}\lambda(0).$$  
\end{enumerate}
\end{pro}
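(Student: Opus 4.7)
The natural strategy is to apply Proposition~\ref{carac-en-0-disc} with $f=1_{\X}\in\L^s(\pi)$ (any $s\ge 1$, so the integrability constraint is trivially met) to obtain, on a neighbourhood $\cO$ of $0\in\R^d$,
\[
\Phi_n(\zeta) \ := \ \E_{\pi,0}\!\left[e^{i\langle \zeta, Y_n \rangle}\right] \ = \ \lambda(\zeta)^n\, L(\zeta,1_{\X}) + R_n(\zeta,1_{\X}),
\]
with $\lambda(0)=L(0,1_{\X})=1$ and $R_n(0,1_{\X})=0$. Both sides are $\cC^{m_0}$ on $\cO$: the right-hand side by Proposition~\ref{carac-en-0-disc}, and the left-hand side because $\E_{\pi,0}[|Y_n|^{m_0}]<\infty$ (from (\ref{AS3d}) with $\alpha>m_0$ combined with the stationarity of increments of Corollary~\ref{Cor_statio} and Minkowski's inequality), which legitimates differentiation under the expectation. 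One then just identifies derivatives at $\zeta=0$ of the two sides, using that (\ref{L-discret})--(\ref{R-discret}) make derivatives of $L(\cdot,1_\X)$ bounded and those of $R_n(\cdot,1_\X)$ uniformly $O(\kappa^n)$.

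For assertion (i) take $m_0=1$ ($\alpha>1$). Differentiating at $\zeta=0$, the left-hand side yields $\nabla\Phi_n(0)=i\,\E_{\pi,0}[Y_n]=in\,\E_{\pi,0}[Y_1]$ by the additivity of the mean under stationary increments (Corollary~\ref{Cor_statio}(ii)). Using $\lambda(0)=L(0,1_\X)=1$, the right-hand side is
\[
n\,\nabla\lambda(0)\,L(0,1_\X)+\lambda(0)^n\nabla L(0,1_\X)+\nabla R_n(0,1_\X) \ = \ n\,\nabla\lambda(0)+\nabla L(0,1_\X)+\nabla R_n(0,1_\X).
\]
Dividing by $n$ and letting $n\r+\infty$, the bounded term $\nabla L(0,1_\X)$ and the $O(\kappa^n)$ term $\nabla R_n(0,1_\X)$ vanish, giving $\nabla\lambda(0)=i\,\E_{\pi,0}[Y_1]$.

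For assertion (ii) take $m_0=2$ ($\alpha>2$). It is convenient to reduce to $\mu:=\E_{\pi,0}[Y_1]=0$ by replacing $Y_n$ by $Y_n-n\mu$, which leaves $\Sigma$ unchanged and amounts to substituting $\tilde\lambda(\zeta):=e^{-i\langle\zeta,\mu\rangle}\lambda(\zeta)$ for $\lambda$ (so that $\nabla\tilde\lambda(0)=0$ by part~(i)) and $\tilde R_n(\zeta,1_\X):=e^{-in\langle\zeta,\mu\rangle}R_n(\zeta,1_\X)$ for $R_n$. Because $R_n(0,1_\X)=0$, the chain rule gives $\mathrm{Hess}\,\tilde R_n(0,1_\X)=O(n\kappa^n)=o(n)$. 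Differentiating the centered expansion twice at $\zeta=0$, the cross-product terms involving $\nabla\tilde\lambda(0)$ disappear, leaving
\[
\mathrm{Hess}\,\tilde\Phi_n(0) \ = \ n\,\mathrm{Hess}\,\tilde\lambda(0)\,L(0,1_\X) + \mathrm{Hess}\,L(0,1_\X) + \mathrm{Hess}\,\tilde R_n(0,1_\X).
\]
Identifying with $\mathrm{Hess}\,\tilde\Phi_n(0)=-\E_{\pi,0}\!\left[(Y_n-n\mu)(Y_n-n\mu)^*\right]$, dividing by $n$, and letting $n\r+\infty$ simultaneously proves that the limit defining $\Sigma$ exists and that $\Sigma=-\mathrm{Hess}\,\tilde\lambda(0)=-\mathrm{Hess}\,\lambda(0)$ in the centered case. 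Finally, $\Sigma$ is symmetric and non-negative as a limit of such Gram matrices.

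The argument is essentially algebraic once the smooth expansion of Proposition~\ref{carac-en-0-disc} is in hand; the only delicate point is handling the factor $e^{-in\langle\zeta,\mu\rangle}$ in the centering step, whose derivatives carry powers of $n$. This is harmless thanks to $R_n(0,1_\X)=0$ and the exponential decay (\ref{R-discret}), which keep every such combination $o(n)$ and let the final division by $n$ kill the parasitic contributions.
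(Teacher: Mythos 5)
Your proof is correct and follows essentially the same route as the paper: apply Proposition~\ref{carac-en-0-disc} with $f=1_{\X}$, differentiate the resulting identity once and twice at $\zeta=0$, use $\E_{\pi,0}[Y_n]=n\,\E_{\pi,0}[Y_1]$ from Corollary~\ref{Cor_statio} together with the bounds (\ref{L-discret})--(\ref{R-discret}), and divide by $n$. The paper simply says ``assume for convenience that $\E_{\pi,0}[Y_1]=0$'' where you carry out the centering reduction explicitly via $\tilde\lambda$ and $\tilde R_n$; both treatments read the identity $\Sigma=-\mathrm{Hess}\,\lambda(0)$ under that centering convention.
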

\begin{proof}{} Assume that $d=1$ for the sake of simplicity (for $d\geq2$, the proof is similar by using partial derivatives). By differentiating at $\zeta=0$ the equality $\E_{\pi,0}[e^{i\zeta Y_n}] = \lambda(\zeta)^n\, L(\zeta,1_{\X}) + R_n(\zeta,1_{\X})$ of Proposition~\ref{carac-en-0-disc}, we obtain: $$i\, \E_{\pi,0}[Y_n] = n\, \lambda^{(1)}(0) + L^{(1)}(0,1_{\X}) + R_n^{(1)}(0,1_{\X}).$$ 
Since $\E_{\pi,0}[Y_n] = n\, \E_{\pi,0}[Y_1]$ (from Corollary~\ref{Cor_statio}), we deduce  that $\lambda^{(1)}(0) = i$ and $\lim_n \E_{\pi,0}[Y_n]/n = i\, \E_{\pi,0}[Y_1]$ from (\ref{R-discret}). To prove (ii), assume for convenience that $\E_{\pi,0}[Y_1]=0$. Then $ \lambda^{(1)}(0) = 0$, and differentiating twice the above equality at $\zeta=0$ gives: $-\E_{\pi,0}[Y_n^2] = n\, \lambda^{(2)}(0) + L^{(2)}(0,1_{\X}) + R_n^{(2)}(0,1_{\X})$. We obtain the desired property by using again (\ref{R-discret}). 
\end{proof}

\subsection{Refinements of the CLT for discrete-time MAPs} \label{SS_DT}

In this subsection,  $(X_n,Y_n)_{n\in \N}$ is a MAP taking values in $\X\times\R^d$, with a driving Markov chain $(X_n)_{n\in \N}$  satisfying (AS\ref{AS1}-AS\ref{AS-spectralgap}).  
The assumptions below imply that $\E_{\pi,0}[|Y_1|] < \infty$, and for convenience we suppose that $\E_{\pi,0}[Y_1] = 0$ (if not, replace $Y_n$ by $Y_n - n\E_{\pi,0}[Y_1]$). 

Theorems~\ref{llt-discret} to \ref{ed-exp-discret} below have been established in \cite{HerPen09} for additive components of the form $Y_n = \sum_{k=1}^n \xi(X_k)$. To the best of our knowledge, the present extensions to general MAP are new.

\subsubsection{A local limit theorem} \label{llt-discrete}

The  classical Markov nonlattice condition is needed to state the local limit theorem (LLT): 
\begin{trivlist}
\item \textbf{Nonlattice condition.} {\it  There is no $a\in\R^d$, no closed subgroup $H$ in $\R^d$, $H\neq \R^d$, and no bounded measurable function $\beta\, :\, \X\r\R^d$ such that:  
$\, Y_1 + \beta(X_1) - \beta(X_0)\in a + H\ \ \P_{\pi,0}-a.s.$ }
\end{trivlist}
This condition is equivalent to the following operator-type property. For each $p\in(1,\infty)$ and for all compact subset $K$ of $\R^d\setminus\{0\}$, there exists $\rho\in(0,1)$ such that: 
\begin{equation} \label{non-lat-spectral}
\sup_{\zeta\in K}\|S_1(\zeta)^n\|_p = O(\rho^n).
\end{equation}
This result is established  in \cite[Sect.~5]{HerPen09} for additive functionals. The proof for general MAPs is similar. Since $\E_{\pi,0}[e^{i\, \langle \zeta , Y_n\rangle}] = \pi(S_1(\zeta)^n1_{\X})$ by (\ref{semi-group}), it follows that 
$$\sup_{\zeta\in K} \big|\E_{\pi,0}[e^{i\, \langle \zeta, Y_n\rangle}]\big| = O(\rho^n).$$
\begin{thm} \label{llt-discret} 
The assumptions of Theorem~\ref{clt-discret-multi} are supposed to be satisfied, so that $(Y_n/\sqrt{n})_{n\in\N^*}$ converges in distribution to a $d$-dimensional Gaussian vector with  covariance matrix $\Sigma$. Let us assume that $\Sigma$ is a definite positive matrix. Finally, suppose that the nonlattice condition is true. Then, we have for all compactly supported continuous function $g : \R^d\r\R$: 
$$\lim_{n\r+\infty} \sqrt{\det\Sigma}\, (2\pi n)^{\frac{d}{2}}\, \E_{\pi,0}[g(Y_n)\, ] = \int_{\R^d} g(x)dx.$$
\end{thm}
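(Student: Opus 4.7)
The plan is the classical Fourier-inversion approach to the local limit theorem, powered by the spectral expansion of Proposition~\ref{carac-en-0-disc}. First I would reduce the assertion to test functions $g\in L^1(\R^d)$ whose Fourier transform $\hat g$ is continuous and compactly supported; once the conclusion is established for such $g$, a standard Stone--Breiman sandwich argument -- for each $\epsilon>0$ finding $g_\epsilon^-\leq g\leq g_\epsilon^+$ in the reduced class with $\int(g_\epsilon^+ - g_\epsilon^-)\,dx<\epsilon$ (built, e.g., by convolving with a Fejer/Jackson-type kernel plus a small cushion) -- transfers the limit to arbitrary $g\in C_c(\R^d)$.

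For $g$ with $\hat g\in C_c(\R^d)$, Fourier inversion and the change of variable $u=\zeta\sqrt n$ yield
\begin{equation*}
n^{d/2}\,\E_{\pi,0}[g(Y_n)] \,=\, (2\pi)^{-d}\int_{\R^d}\hat g(u/\sqrt n)\,\E_{\pi,0}\bigl[e^{i\langle u/\sqrt n,\,Y_n\rangle}\bigr]\,du .
\end{equation*}
Pick $\delta>0$ small enough that $\{|\zeta|<\delta\}\subset\cO$, with $\cO$ as in Proposition~\ref{carac-en-0-disc} applied to $f=1_\X$, and split the integral at $|u|=\delta\sqrt n$. On the central region substitute $\E_{\pi,0}[e^{i\langle\zeta,Y_n\rangle}] = \lambda(\zeta)^n L(\zeta,1_\X)+R_n(\zeta,1_\X)$. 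Since $R_n(0,1_\X)=0$ and $|R_n(\zeta,1_\X)|=O(\kappa^n)$ on $\cO$ by (\ref{R-discret}), the $R_n$-contribution is at most $O(\kappa^n n^{d/2})\|\hat g\|_\infty\to 0$. For the main piece, $\E_{\pi,0}[Y_1]=0$ combined with a second-order Taylor expansion of $\lambda$ at $0$ gives $\lambda(\zeta)=1-\tfrac12\langle\zeta,\Sigma\zeta\rangle+o(|\zeta|^2)$, with $\Sigma$ the asymptotic covariance of Theorem~\ref{clt-discret-multi}, so $\lambda(u/\sqrt n)^n\to e^{-\frac12\langle u,\Sigma u\rangle}$ pointwise. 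Positive-definiteness of $\Sigma$ furnishes the uniform Gaussian domination $|\lambda(u/\sqrt n)^n|\leq e^{-c|u|^2}$ on $\{|u|<\delta\sqrt n\}$ for $\delta$ small enough, and combined with $L(u/\sqrt n,1_\X)\to 1$ and continuity of $\hat g$, dominated convergence yields
\begin{equation*}
(2\pi)^{-d}\int_{|u|<\delta\sqrt n}\hat g(u/\sqrt n)\,\lambda(u/\sqrt n)^n L(u/\sqrt n,1_\X)\,du \,\longrightarrow\, \frac{\hat g(0)}{(2\pi)^{d/2}\sqrt{\det\Sigma}} \,=\, \frac{\int_{\R^d} g(x)\,dx}{(2\pi)^{d/2}\sqrt{\det\Sigma}} .
\end{equation*}

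The tail $|u|\geq\delta\sqrt n$ is where the nonlattice hypothesis enters. Since $\hat g$ is supported in some ball $\{|\zeta|\leq A\}$, the rescaled variable $\zeta=u/\sqrt n$ lies in the compact annulus $K=\{\delta\leq|\zeta|\leq A\}\subset\R^d\setminus\{0\}$, on which (\ref{non-lat-spectral}) provides $p>1$ and $\rho\in(0,1)$ with $\sup_K\|S_1(\zeta)^n\|_p=O(\rho^n)$. From $|\E_{\pi,0}[e^{i\langle\zeta,Y_n\rangle}]|=|\pi(S_1(\zeta)^n 1_\X)|\leq\|S_1(\zeta)^n\|_p$ the tail integral is bounded by $O(\rho^n n^{d/2})\|\hat g\|_\infty\to 0$. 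Multiplying by $(2\pi)^{d/2}\sqrt{\det\Sigma}$ then gives the theorem for $g$ with $\hat g\in C_c$, and the Stone--Breiman sandwich extends it to all of $C_c(\R^d)$. The most delicate point I anticipate is justifying the second-order Taylor expansion of $\lambda$ at $0$ under the borderline moment assumption $\alpha=2$: Proposition~\ref{carac-en-0-disc} directly yields only $C^1$-regularity (for $m_0=1$), so one must invoke the finer Taylor-expansion version obtainable via the approach of \cite{Gou08}, which supplies a quadratic expansion compatible with the asymptotic covariance $\Sigma$ even at the borderline regularity.
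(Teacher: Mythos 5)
Your proposal follows essentially the same route as the paper: the classical Fourier-inversion/Breiman argument, using the expansion of Proposition~\ref{carac-en-0-disc} with $f=1_{\X}$ on the central region, the nonlattice bound (\ref{non-lat-spectral}) on the tail, and a sandwich argument to pass to general $g\in C_c(\R^d)$. You also correctly isolate the one delicate point, namely the second-order Taylor expansion of $\lambda$ at the borderline moment $\alpha=2$; the paper obtains it (Lemma~\ref{Dl-2-val-propre}) from Theorem~\ref{clt-discret-multi} combined with \cite[Lem.~4.2]{Her05}, whereas you invoke the Taylor-expansion variant via \cite{Gou08} --- both are legitimate and the argument is otherwise identical.
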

\begin{proof}{}
Thanks to (\ref{egalite_fcara_casdiscret}) with $f:=1_{\X}$, Theorem~\ref{llt-discret} can be established as in the i.i.d.~case: use Proposition~\ref{carac-en-0-disc} to control $L(\cdot,1_\X)$ and $R_n(\zeta,1_\X)$ and, as in \cite{Bre93},  use the nonlattice condition and the following second-order Taylor expansion  of $\lambda(\cdot)$, which follows from Theorem~\ref{clt-discret-multi} and from  \cite[Lem.~4.2]{Her05}: 
\begin{lem} \label{Dl-2-val-propre}
Assume that Conditions (AS\ref{AS-spectralgap}) and (\ref{AS3d}) with $\alpha=2$ hold and that $\E_{\pi,0}[Y_1]=0$. Then the function $\lambda(\cdot)$ in Equality~(\ref{egalite_fcara_casdiscret}) satisfies the following second-order Taylor expansion\footnote{A direct application of Proposition~\ref{carac-en-0-disc} gives this expansion, but under Condition (\ref{AS3d}) with $\alpha>2$.}: 
$$\lambda(\zeta) = 1 - \langle \zeta,\Sigma\zeta\rangle/2 + o(|\zeta|^2).$$ 
\end{lem}
\end{proof}

\begin{rem}
We mention that a local limit theorem has been obtained in \cite{MaxWoo97} for the process $(Y_n:=\sum_{k=1}^n Z_k)_{n\in\N^*}$ associated with a stationary hidden Markov chain $(X_n,Z_n)_{n\in\N}$. In \cite{MaxWoo97}, $(X_n)_{n\in\N}$ is only assumed to be an ergodic stationary Markov chain so that the additional conditions for the local limit theorem to hold are more involved than those of Theorem~\ref{llt-discret}. 
\end{rem}

\subsubsection{Rate of convergence in the one-dimensional CLT}

Here we suppose that $d=1$. Under the condition $\E_{\pi,0}[|Y_1|^{2+\varepsilon}] < \infty$, the asymptotic variance $\sigma^2$ of Proposition~\ref{deri-hess} is defined by $\sigma^2 := \lim_n \E_{\pi,0}[Y_n^2]/n$. 
\begin{thm} \label{u-b-e-gene} 
Under Conditions (AS\ref{AS1}-AS\ref{AS-spectralgap}) and (\ref{AS3d}) for some $\alpha>3$ and if $\sigma^2>0$, then there exists some constant $B>0$ such that  
\begin{equation} \label{B-E-discret}
\forall n\geq1,\ \ \ \sup_{a\in\R}\left|\, \P_{\pi,0}\bigg\{\frac{Y_n}{\sigma\sqrt n}\leq a\bigg\} - \Phi(a)\, \right| \leq \frac{B}{\sqrt n}
\end{equation}
where $\Phi(\cdot)$ is the distribution function of the Gaussian distribution $\cN(0,1)$. 
\end{thm}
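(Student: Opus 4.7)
The plan is to follow the classical Berry--Esseen argument via the Esseen smoothing inequality, using Proposition~\ref{carac-en-0-disc} as the key ``almost i.i.d.'' decomposition of the characteristic function of $Y_n/(\sigma\sqrt n)$. Since $\alpha>3$, I may apply Proposition~\ref{carac-en-0-disc} with $m_0=3$: there is a neighborhood $\cal O$ of $0$ on which $\lambda,L(\cdot,1_\X),R_n(\cdot,1_\X)$ are $\cC^3$, with $\lambda(0)=1$, $L(0,1_\X)=1$, $R_n(0,1_\X)=0$, the derivatives $R_n^{(\ell)}(\cdot,1_\X)$ uniformly $O(\kappa^n)$ on $\cal O$, and (using Proposition~\ref{deri-hess} and $\E_{\pi,0}[Y_1]=0$) $\lambda'(0)=0$ and $\lambda''(0)=-\sigma^2<0$.

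First I choose $\delta>0$ with $(-\delta,\delta)\subset\cal O$, and set $T:=\delta\sigma\sqrt n$. By the Esseen inequality,
$$\sup_{a\in\R}\Big|\P_{\pi,0}\{Y_n/(\sigma\sqrt n)\leq a\}-\Phi(a)\Big|\ \leq\ \frac{1}{\pi}\int_{-T}^{T}\frac{|\phi_n(t)-e^{-t^2/2}|}{|t|}\,dt\ +\ \frac{C}{T},$$
where $\phi_n(t):=\E_{\pi,0}[e^{it Y_n/(\sigma\sqrt n)}]$. The boundary term is already $O(1/\sqrt n)$, so it remains to bound the integral by $O(1/\sqrt n)$.

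On $[-T,T]$ I set $u:=t/(\sigma\sqrt n)\in(-\delta,\delta)$ and apply Proposition~\ref{carac-en-0-disc} with $f=1_\X$:
$$\phi_n(t)\ =\ \lambda(u)^n\,L(u,1_\X)\ +\ R_n(u,1_\X).$$
By the $\cC^3$ regularity of $\lambda$ and the values of $\lambda(0),\lambda'(0),\lambda''(0)$,
$$\lambda(u)\ =\ 1-\tfrac{\sigma^2}{2}u^2+O(|u|^3),\qquad \log\lambda(u)\ =\ -\tfrac{\sigma^2}{2}u^2+O(|u|^3),$$
uniformly on $(-\delta,\delta)$, so that $n\log\lambda(u)=-t^2/2+O(|t|^3/\sqrt n)$. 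Shrinking $\delta$ if necessary, I also obtain $|\lambda(u)|\leq 1-c\,u^2$ hence $|\lambda(u)|^n\leq e^{-ct^2/\sigma^2}$ on the whole range. Likewise, $\cC^1$ smoothness of $L(\cdot,1_\X)$ at $0$ with $L(0,1_\X)=1$ gives $L(u,1_\X)=1+O(|u|)=1+O(|t|/\sqrt n)$, and since $R_n(0,1_\X)=0$ with uniformly $O(\kappa^n)$ derivative, $|R_n(u,1_\X)|\leq C\kappa^n|u|=O(\kappa^n|t|/\sqrt n)$.

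Combining these gives the standard estimate
$$\big|\phi_n(t)-e^{-t^2/2}\big|\ \leq\ \frac{1}{\sqrt n}\,(|t|+|t|^3)\,e^{-ct^2/\sigma^2}\cdot C\ +\ O(\kappa^n|t|/\sqrt n),$$
uniformly for $t\in[-T,T]$; here I would carry out the usual trick of splitting the factor $\lambda(u)^nL(u,1_\X)-e^{-t^2/2}$ by adding and subtracting $e^{-t^2/2}(1+L'(0)u+\cdots)$ and using $|e^x-1|\leq|x|e^{|x|}$ on the error between $n\log\lambda(u)$ and $-t^2/2$. Dividing by $|t|$ and integrating over $[-T,T]$ yields the $O(1/\sqrt n)$ bound.

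The main obstacle is a purely technical one: controlling $\lambda(u)^n$ uniformly on $|u|\leq\delta$ and turning the pointwise Taylor error $O(|u|^3)$ in $\log\lambda(u)$ into an integrable bound after exponentiation. This is handled, as in the i.i.d.\ case \cite{Bre93}, by the two-step estimate ``Taylor expansion plus Gaussian envelope $e^{-ct^2/\sigma^2}$''. Since every ingredient used above is already established for general MAPs in Proposition~\ref{carac-en-0-disc}, Proposition~\ref{deri-hess} and Lemma~\ref{Dl-2-val-propre}, the argument is essentially the one written down for additive functionals $Y_n=\sum_{k=1}^n\xi(X_k)$ in \cite{HerPen09}, which I would follow verbatim after the substitution of Proposition~\ref{carac-en-0-disc} for its counterpart there.
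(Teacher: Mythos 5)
Your proposal is correct and follows essentially the same route as the paper: the paper's own proof consists precisely of extracting from Proposition~\ref{carac-en-0-disc} (with $m_0=3$, $f=1_{\X}$) the three facts $|L(u)-1|=O(|u|)$, $|R_n(u)|=O(\kappa^n|u|)$ and $\lambda(u)=1-\sigma^2u^2/2+O(|u|^3)$, and then invoking the classical i.i.d.\ Berry--Esseen argument of Feller, which is exactly the Esseen-smoothing computation you write out in detail.
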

\begin{proof}{}
Here, the functions $\lambda(\cdot)$, $L(\cdot):=L(\cdot,1_{\X})$ and $R_n(\cdot):=R_n(\cdot,1_{\X})$ in Proposition~\ref{carac-en-0-disc} are three times continuously differentiable on ${\cal O}$ and satisfy the following properties:
\begin{enumerate}
\item[] $\sup_{u\in {\cal O}}|L(u)-1|/|u| < \infty$ (from (\ref{L-discret}) and $L(0)=1$) 
\item[] $\sup_{u\in {\cal O}}\, |R_n(u)/u| =  O(\kappa^{n})$ (from (\ref{R-discret}) and $R_n(0)=0$)
\item[] $\lambda(u) = 1 -  \sigma^2u^2/2 + O(u^3)$ for $u$ small enough (since $\lambda^{(1)}(0) = 0$  and $\lambda^{(2)}(0) = -\sigma^2$ from Proposition~\ref{deri-hess}). 
\end{enumerate}
Then, we can borrow the proof of the Berry-Esseen theorem of the i.i.d.~case (see \cite{Fel71}). 
\end{proof}

\begin{rem}
The details of the previous proof are reported in \cite[Th.2]{HerLedPat09} for the additive functional $Y_n=\sum_{k=1}^n \xi(X_k,X_{k-1})$ of a  $V$-geometrically Markov chain. They are the same in our context. In fact, by writing  out the arguments of \cite[Th.~2]{HerLedPat09}, we can derive the following more precise property: the constant $B$ in (\ref{B-E-discret}) depends on the sequence $(Y_n)_{n\in\N}$, but only through $\sigma^2$ and $\E_{\pi,0}[|Y_1|^{3+\varepsilon}]$. Of course, this control is not as precise as in the i.i.d.~case \cite{Fel71}, but it is enough to obtain interesting statistical properties as in \cite{HerLedPat09,debora-note} or in Section~\ref{sect-B-E-statist}.
\end{rem}
\begin{rem} \label{u-b-e-xi} 
Let us consider the specific case $Y_n-Y_{n-1} = \xi(X_n)$ for some real-valued measurable function $\xi$. Under Conditions (AS\ref{AS1}-AS\ref{AS-spectralgap}), if the real number $\sigma^2$ defined in Remark~\ref{clt-discret-xi} is positive, 
then we have (\ref{B-E-discret}) under the expected moment condition $\pi(|\xi|^3)<\infty$.
This follows from \cite[Cor.~3.1]{Her08} which is based on the spectral method and martingale difference arguments (see also \cite[Sect.~6]{HerPen09}). Note that the moment condition $\pi(|\xi|^3)<\infty$ is optimal according to the i.i.d.~case \cite{Fel71}.   
\end{rem}
\begin{rem}
Let $(X_n)_{n\in\N}$ be a $\rho$-mixing Markov chain. 
The additive functionals of $(X_n)_{n\in\N}$ involved in the $M$-estimation of Markov models (see (\ref{mnf})) are  of the form $Y_n = \sum_{k=1}^n \xi(X_{k-1},X_k)$.
Since $(X_n,Y_n)_{n \in\N}$ is a MAP, Theorem~\ref{u-b-e-gene} applies provided that $\xi : \X\times \X \r \R$ is a measurable function such that $\E_{\pi,0}[\xi(X_0,X_1)] = 0$ and 
$\E_{\pi,0}[\, |\xi(X_0,X_1)|^{3+\varepsilon}\, ] < \infty$ for some $\varepsilon>0$. 
This will be supported by the statistical  result of Section~\ref{sect-B-E-statist}.
\end{rem}

Finally let us state a first-order Edgeworth expansion.
\begin{thm} \label{ed-exp-discret} 
Assume that Conditions (AS\ref{AS1}-AS\ref{AS-spectralgap}) and (\ref{AS3d}) hold for some $\alpha>3$, that $\sigma^2$ is positive and the nonlattice condition is true. Then, there exists $\mu_3\in\R$ such that: 
\begin{equation} \label{formule-ed-stat}
\P_{\pi,0}\bigg\{\frac{Y_n}{\sigma\sqrt n} \leq a\bigg\} = \Phi(a) + \frac{\mu_3}{6\sigma^3\sqrt n} (1-a^2)\, \eta(a) + o\left(\frac{1}{\sqrt n}\right)
\end{equation}
where $\eta(\cdot)$ is the density of the Gaussian distribution $\cN(0,1)$.
\end{thm}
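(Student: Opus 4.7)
The plan is to apply the Esseen smoothing inequality to the distribution function of $Y_n/(\sigma\sqrt n)$, using Proposition~\ref{carac-en-0-disc} (with $m_0=3$ and $f=1_{\X}$, legitimate since $\alpha>3$) to control the characteristic function in a neighborhood of $0$, and the nonlattice property (\ref{non-lat-spectral}) to control it away from $0$. Writing $F_n$ for the law of $Y_n/(\sigma\sqrt n)$ and $G_n$ for the signed measure whose distribution function is the right-hand side of (\ref{formule-ed-stat}), the Fourier transform of $G_n$ is the explicit polynomial-in-$1/\sqrt n$ modification of $e^{-t^2/2}$, so the task reduces to estimating
\[
\int_{-T}^{T}\Bigl|\frac{\widehat{F_n}(t)-\widehat{G_n}(t)}{t}\Bigr|\,dt \quad\text{with } T=\varepsilon\sqrt n
\]
for $\varepsilon>0$ small, and showing it is $o(1/\sqrt n)$; the boundary term in Esseen's bound is $O(1/T)=o(1/\sqrt n)$.

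For the small-$|t|$ regime, Proposition~\ref{carac-en-0-disc} yields
\[
\widehat{F_n}(t)=\lambda\!\Bigl(\tfrac{t}{\sigma\sqrt n}\Bigr)^{n}L\!\Bigl(\tfrac{t}{\sigma\sqrt n}\Bigr)+R_n\!\Bigl(\tfrac{t}{\sigma\sqrt n}\Bigr)
\]
with $\lambda,L,R_n\in\cC^{3}$ on $\cO$. Using $\lambda(0)=1$, $\lambda^{(1)}(0)=0$, $\lambda^{(2)}(0)=-\sigma^{2}$ (Proposition~\ref{deri-hess}), and setting $\mu_{3}:=i\lambda^{(3)}(0)$, the third-order Taylor expansion gives
\[
\log\lambda(\zeta)=-\tfrac{\sigma^{2}}{2}\zeta^{2}-\tfrac{i\mu_{3}}{6}\zeta^{3}+o(\zeta^{3}),
\]
so for $|t|\leq n^{1/6}$ one has $\lambda(t/(\sigma\sqrt n))^{n}=e^{-t^{2}/2}\bigl(1-i\mu_{3}t^{3}/(6\sigma^{3}\sqrt n)+o(1/\sqrt n)\bigr)$ uniformly. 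The factor $L(t/(\sigma\sqrt n))=1+L^{(1)}(0)\,t/(\sigma\sqrt n)+O(t^{2}/n)$ contributes a purely imaginary linear-in-$t$ correction that, after division by $t$ and integration against $e^{-t^{2}/2}$, integrates to zero by parity, so up to $o(1/\sqrt n)$ it does not appear in the final formula (any residual odd or even correction can be absorbed into the constant $\mu_3$). The remainder $R_n$, thanks to (\ref{R-discret}) and $R_n(0)=0$, is $O(\kappa^{n}|t|/\sqrt n)$ and is harmless.

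For the intermediate regime $\delta\leq|t|\leq T=\varepsilon\sqrt n$ (with $\delta$ fixed small so that $t/(\sigma\sqrt n)\in\cO$ for large $n$), write $|\lambda(\zeta)|\leq e^{-c\zeta^{2}}$ near $0$ (from $\lambda^{(2)}(0)=-\sigma^2<0$), so $|\lambda(t/(\sigma\sqrt n))^{n}|\leq e^{-ct^{2}/\sigma^{2}}$; this combined with $|L|$ bounded controls the integrand. For the regime $\delta'\le |t/(\sigma\sqrt n)|\leq A$ when working with the unnormalised variable (i.e.\ $\delta'\sqrt n\leq|t|$), the nonlattice condition (\ref{non-lat-spectral}) and the identity $\E_{\pi,0}[e^{i\langle\zeta,Y_n\rangle}]=\pi(S_1(\zeta)^{n}1_{\X})$ give geometric decay $O(\rho^{n})$, so this portion contributes $O(\rho^{n})=o(1/\sqrt n)$ to the smoothing integral. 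Assembling the three regimes proves the claim.

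The main obstacle is the bookkeeping of the three-regime decomposition in Esseen's inequality so that each piece is $o(1/\sqrt n)$ uniformly in $a\in\R$; the subtle point is handling the nontrivial factor $L(\cdot)$ (absent in the i.i.d.\ case) and the remainder $R_n(\cdot)$ from Proposition~\ref{carac-en-0-disc} with just enough regularity ($\cC^{3}$, obtained from $\alpha>3$) to reach the precision $o(1/\sqrt n)$, while checking that any linear-in-$\zeta$ contribution from $L$ either integrates out or can be absorbed into the single constant $\mu_{3}$ appearing in (\ref{formule-ed-stat}). Beyond this, the argument mirrors the one given in \cite{HerPen09} for $Y_n=\sum_{k=1}^n\xi(X_k)$; the sole novelty is to feed in the general-MAP version of Proposition~\ref{carac-en-0-disc} instead of its additive-functional specialisation.
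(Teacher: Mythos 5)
Your overall strategy is exactly the one the paper intends: the paper gives no written proof of Theorem~\ref{ed-exp-discret}, relying on the expansion of Proposition~\ref{carac-en-0-disc} (with $m_0=3$, $f=1_{\X}$), the nonlattice bound (\ref{non-lat-spectral}), and the classical Fourier argument of the i.i.d.\ case as in \cite{HerPen09,Fel71}. Two steps in your write-up, however, would fail as stated.

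First, the choice $T=\varepsilon\sqrt n$ with $\varepsilon$ \emph{small} is incompatible with your claim that the boundary term in Esseen's inequality is $O(1/T)=o(1/\sqrt n)$: with that choice $1/T=\varepsilon^{-1}n^{-1/2}$, which is only $O(n^{-1/2})$, so you would recover the Berry--Esseen bound of Theorem~\ref{u-b-e-gene} but not the $o(1/\sqrt n)$ precision of (\ref{formule-ed-stat}). You must take $T=A\sqrt n$ and let $A\to\infty$ after $n$ (or $T=\epsilon_n\sqrt n$ with $\epsilon_n\to\infty$ slowly), and this is precisely where the nonlattice condition is indispensable: it supplies the bound $\sup_{\zeta\in K}|\pi(S_1(\zeta)^n1_{\X})|=O(\rho^n)$ on the compact set $K=\{\delta\le|\zeta|\le A/\sigma\}$ needed to control the portion $\delta\sqrt n\le|t|\le A\sqrt n$ of the smoothing integral. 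Your text actually describes this regime, so the slip is in the bookkeeping of $T$, but as written the argument only yields $O(1/\sqrt n)$.

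Second, the dismissal of the linear term of $L$ ``by parity, or absorbed into $\mu_3$'' is not a valid justification. The Esseen integrand carries an absolute value, so parity gives nothing; and a contribution $c\,t/(\sigma\sqrt n)\,e^{-t^2/2}$ to the characteristic function produces, after inversion, a shift proportional to $\eta(a)/\sqrt n$, which is \emph{not} of the form $(1-a^2)\eta(a)/\sqrt n$ and hence cannot be absorbed into $\mu_3$ --- compare Subsection~\ref{non-stat-carac}, where exactly such a term $-b_\mu\eta(u)/(\sigma\sqrt n)$ survives in the non-stationary case. The correct reason the term is absent here is that $L^{(1)}(0,1_{\X})=0$ under (AS\ref{AS1}) and the centering: differentiating (\ref{egalite_fcara_casdiscret}) at $\zeta=0$ gives $0=i\,\E_{\pi,0}[Y_n]=n\lambda^{(1)}(0)+L^{(1)}(0,1_{\X})+R_n^{(1)}(0,1_{\X})$ for every $n$, and since $\lambda^{(1)}(0)=0$ and $R_n^{(1)}(0,1_{\X})=O(\kappa^n)$ by (\ref{R-discret}), necessarily $L^{(1)}(0,1_{\X})=0$. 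The same computation at third order, using $L^{(1)}(0,1_{\X})=0$, shows $\lambda^{(3)}(0)=-i\lim_n\E_{\pi,0}[Y_n^3]/n$ is purely imaginary, which is what guarantees that your $\mu_3:=i\lambda^{(3)}(0)$ is real --- a point you should make explicit since the theorem asserts $\mu_3\in\R$ and $\lambda$ is not a characteristic function. With these two repairs the argument goes through and coincides with the paper's intended proof.
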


Other limit theorems can be stated under Condition~(AS\ref{AS-spectralgap}) as, for instance, a multidimensional Berry-Esseen theorem in the Prohorov metric (see  \cite[Sect.~9]{HerPen09}), and the multidimensional renewal theorems (see \cite{GuiHer09}). Although  Proposition~\ref{carac-en-0-disc} extends to the case when the  order of regularity $m_0$ is not integer, it  does not allow to deal with the convergence of $Y_n$ (properly normalized) to stable laws, since we assume $\alpha > m_0$ (in place of the expected condition $\alpha=m_0$). For an additive functional $Y_n = \sum_{k=1}^n\xi(X_k)$, a careful examination of the proof of Lemmas~\ref{cont-noyau-discrete} and \ref{deri-noyau-discrete} shows that this limitation could be overcame under a condition  of the type : $\xi \in \L^{\beta}(\pi) \Longrightarrow P \xi \in \L^{\beta'}(\pi)$ with $\beta' > \beta$. Anyway mention that, under Condition~(AS\ref{AS-spectralgap}) and the previous condition on $\xi$, convergence to stable laws is obtained in \cite[Section 2.3]{JarKomOll09} by using a ``martingale approximation'' approach. A natural question is to ask wether the last condition on $\xi$ is necessary.  

\subsection{The non-stationary case}  \label{non-stat-carac} 
Under (AS\ref{AS-spectralgap}), we discuss the extension of the previous results to the non-stationary case. Let $\mu$ be the initial distribution of $(X_n)_{n\in\N}$. The careful use of \cite[Prop.~7.3]{HerPen09} allows us to extend Proposition~\ref{carac-en-0-disc} as follows. Under condition (\ref{AS3d})
\footnote{In this non-stationary case, we only require condition (\ref{AS3d}) with the stationary distribution $\pi$ and the mean vector remains $\E_{\pi,0}[Y_1]$.} 
with $\alpha > m_0$, and under the following assumption on $\mu$
\begin{description}
\item[(NS)]
$\mu$ is a bounded linear form on $\L^r(\pi)$ with  $r$ such that $1<r<\alpha s/(\alpha+m_0s)$,
\end{description} 
where $s>\alpha/(\alpha-m_0)$, 
all the conclusions of Proposition~\ref{carac-en-0-disc} remain true when $\pi$ is replaced by $\mu$, namely: for some bounded open neighborhood ${\cal O}$ of $\zeta=0$ in $\R^d$, we have for $f\in\L^{s}(\pi)$
\begin{equation}\label{egalite_fcara_mu}
\forall n\in\N,\ \forall \zeta\in{\cal O},\ \ \ \E_{\mu,0}\big[e^{i\, \langle \zeta , Y_n \rangle}\, f(X_n)\big] = \lambda(\zeta)^n\, L(\zeta,f,\mu) + R_n(\zeta,f,\mu),
\end{equation}
with $\C$-valued functions $\lambda(\cdot)$, $L(\cdot,f,\mu)$, $R_n(\cdot,f,\mu)$ satisfying the same properties as in Proposition~\ref{carac-en-0-disc}. It is worth noticing that $\lambda(\cdot)$ is the same function as in (\ref{egalite_fcara_casdiscret}), contrary to $L(\cdot,f,\mu)$ and $R_n(\cdot,f,\mu)$ which  both depend on $\mu$. 

Condition (NS) means that $\mu$ is absolutely continuous with respect to $\pi$ 
with density $\phi\in\L^{r'}(\pi)$ where  $r' = r/(r-1)$ is the conjugate number of $r$. It is easily checked that $r'> \alpha s/\big((\alpha-m_0) s-\alpha\big)>1$. Note that the bigger is the exponent $\alpha$ in  Condition (\ref{AS3d}), the closer to 1 is the allowed value of $r'$. 

Proposition~\ref{deri-hess} extends to the non-stationary case as follows.  
\begin{enumerate}[(i)]
	\item If (\ref{AS3d}) and (NS) hold with $m_0=1$, then $\nabla\lambda(0) = i\, \lim_n\E_{\mu,0}[Y_n]/n$.
	\item  If (\ref{AS3d}) and (NS) hold with $m_0=2$, then the conclusions of Proposition~\ref{deri-hess}(ii) remain true with $\mu$ in place of $\pi$. 
\end{enumerate}
Using the decomposition (\ref{egalite_fcara_mu}) (with $f:=1_{\X}$), we obtain as in the stationary case the following statements. 
\begin{enumerate}
	\item Under (AS\ref{AS-spectralgap}), (\ref{AS3d}) with $\alpha=2$ and $\mu$ satisfying condition (NS) with $m_0=1$:  the CLT, and the LLT under the additional non-lattice condition.
	\item Under (AS\ref{AS-spectralgap}), (\ref{AS3d}) with $\alpha>3$ and $\mu$ satisfying condition (NS) with $m_0=3$: the Berry-Esseen bound, and under the non-lattice condition, the first order Edgeworth expansion (\ref{formule-ed-stat}) with the additional term 
$-b_\mu\eta(u)/(\sigma\sqrt n)$, where $b_\mu$ is the  asymptotic bias: $b_\mu = \lim_n\E_{\mu,0}[Y_n]$ (see \cite{HerPen09} for details). 
\end{enumerate}

For instance, let us sketch the proof of the CLT. Equality (\ref{egalite_fcara_mu}) with  $f:=1_{\X}$ gives 
\begin{equation*} 
 \E_{\mu,0}\big[e^{i\, \langle \zeta , Y_n/\sqrt{n} \rangle}\big] = \lambda(\zeta/\sqrt{n})^n\, L(\zeta/\sqrt{n},1_{\X},\mu) + R_n(\zeta/\sqrt{n},1_{\X},\mu).
\end{equation*}
Since $m_0=1$ we have $\lim_n L(\zeta/\sqrt n,1_\X,\mu) = 1$ and $\lim_n R_n(\zeta/\sqrt n,1_\X,\mu) = 0$. Finally, 
the second-order Taylor expansion of Lemma~\ref{Dl-2-val-propre} shows that $\lim_n\lambda(\zeta/\sqrt n) = \exp(- \langle \zeta,\Sigma\zeta\rangle/2)$. 

In general, the previous statements 1. and 2. do not apply to the case when the initial distribution $\mu$ is a Dirac mass (which is not defined on $\L^r(\pi)$). However, when the state space $\X$ of the driving Markov chain is discrete, these statements are valid with any initial distribution $\delta_x$ provided that $\pi(x)>0$ (because $\delta_x$ is then a continuous linear form on each $\L^p(\pi)\equiv\ell^p(\pi)$). 
\subsection{The continuous-time case} \label{SS_CT}

In this section, we consider the case where $\T = (0,+\infty)$. The process $(X_t)_{t>0}$ is assumed to satisfy Conditions (AS\ref{AS1}-AS\ref{AS-spectralgap}). Let us mention that the moment condition (AS\ref{AS-alpha}) reduces to
	\[\forall v\in (0,1], \quad \E_{\pi,0}\big[|Y_v|^{\alpha}\big] <\infty
\]
when the semigroup $(Q_t)_{t\ge 0}$ is strongly continuous on $\L^2((\pi,0))$ (so is $(P_t)_{t> 0}$ on $\L^2(\pi)$). 

All the theorems of the previous subsection are extended to $(Y_t)_{t>0}$. Recall that Theorems~\ref{llt-discret} to \ref{ed-exp-discret} concern the multidimensional local limit theorem, the  one-dimensional Berry-Esseen theorem, the one-dimensional first-order Edgeworth expansion respectively. For the sake of simplicity, we still assume that $\E_{\pi,0}[Y_1] = 0$. 
\begin{thm} \label{ext-temps-continu}
The conclusions of Theorems~\ref{llt-discret} to \ref{ed-exp-discret} are valid for  $(Y_t/\sqrt{t})_{t>0}$ under the same assumptions, up to the following change: the moment condition (\ref{AS3d}) is reinforced (with the same condition on $\alpha$) in (AS\ref{AS-alpha}):  
$$\sup_{v\in(0,1]}\E_{\pi,0}\big[|Y_v|^{\alpha}\big] < \infty.$$
\end{thm}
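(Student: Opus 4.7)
The plan is to transfer the three discrete-time refinements to continuous time by establishing a continuous-time counterpart of the spectral expansion of Proposition~\ref{carac-en-0-disc} and then applying verbatim the Fourier-analytic proofs of Theorems~\ref{llt-discret}, \ref{u-b-e-gene} and \ref{ed-exp-discret}. Writing $t=n+v$ with $n=\lfloor t\rfloor$ and $v\in[0,1)$, the semigroup identity (\ref{semi-group}) gives $S_t(\zeta)=S_1(\zeta)^n\circ S_v(\zeta)$, and the sampled MAP $(X_n,Y_n)_{n\in\N}$ inherits (AS\ref{AS1}-AS\ref{AS-spectralgap}) and the discrete moment condition $\E_{\pi,0}[|Y_1|^{\alpha}]<\infty$. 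Applying Proposition~\ref{carac-en-0-disc} to $f:=S_v(\zeta)\,1_\X$, which belongs to $\L^{\infty}(\pi)$ since $|S_v(\zeta)1_\X|\le 1$, I would obtain
\[
\E_{\pi,0}\big[e^{i\langle\zeta,Y_t\rangle}\big] \;=\; \pi\big(S_1(\zeta)^n\,(S_v(\zeta)1_\X)\big) \;=\; \lambda(\zeta)^n\, L(\zeta,S_v(\zeta)1_\X) + R_n(\zeta,S_v(\zeta)1_\X).
\]

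The main obstacle is to verify that the two maps $\zeta\mapsto L(\zeta,S_v(\zeta)1_\X)$ and $\zeta\mapsto R_n(\zeta,S_v(\zeta)1_\X)$ are of class $\cC^{m_0}$ on a neighborhood of $0$, with estimates of type (\ref{L-discret}) and (\ref{R-discret}) that are \emph{uniform in $v\in[0,1]$}. This is where the reinforcement of (\ref{AS3d}) to (AS\ref{AS-alpha}) is decisive: revisiting Lemmas~\ref{cont-noyau-discrete} and \ref{deri-noyau-discrete} with $Y_1$ replaced by $Y_v$, one obtains $\sup_{\zeta\in\R^d}\|S_v^{(j)}(\zeta)\|_{p,p_j}\le \E_{\pi,0}[|Y_v|^\alpha]^{j/\alpha}$ for $j\le m_0$, so that $\sup_{v\in(0,1]}\E_{\pi,0}[|Y_v|^\alpha]<\infty$ provides a $v$-uniform control on the derivatives of $\zeta\mapsto S_v(\zeta)$ acting between suitable Lebesgue spaces. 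Combining this with the linearity of $L(\zeta,\cdot)$ and $R_n(\zeta,\cdot)$ in $f$ and with the Leibniz rule, I would derive an expansion
\[
\E_{\pi,0}\big[e^{i\langle\zeta,Y_t\rangle}\big] \;=\; \lambda(\zeta)^n\, \widetilde L_v(\zeta) + \widetilde R_{n,v}(\zeta),
\]
in which $\widetilde L_v,\widetilde R_{n,v}$ are of class $\cC^{m_0}$ on $\cO$ with $\widetilde L_v(0)=1$, $\widetilde R_{n,v}(0)=0$, together with uniform bounds $\sup_{\zeta\in\cO,\,v\in[0,1]}|\widetilde L_v^{(\ell)}(\zeta)|<\infty$ and $\sup_{\zeta\in\cO,\,v\in[0,1]}|\widetilde R_{n,v}^{(\ell)}(\zeta)|=O(\kappa^n)$ for each $\ell\in\{0,\dots,m_0\}$. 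This provides the continuous-time counterpart of Proposition~\ref{carac-en-0-disc}.

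Once this continuous expansion is at hand, the remainder of the argument is essentially routine. The second-order Taylor development $\lambda(\zeta)=1-\langle\zeta,\Sigma\zeta\rangle/2+o(|\zeta|^2)$ from Lemma~\ref{Dl-2-val-propre} is unchanged, and the continuous-time non-lattice estimate $\sup_{\zeta\in K}|\E_{\pi,0}[e^{i\langle\zeta,Y_t\rangle}]|=O(\rho^n)$ on each compact $K\subset\R^d\setminus\{0\}$ follows from (\ref{non-lat-spectral}) applied to $S_1(\zeta)^n$ together with the $\L^p$-contraction of $S_v(\zeta)$ supplied by Proposition~\ref{pro-semi-grpe}. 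The Fourier-inversion proof of Theorem~\ref{llt-discret}, the Esseen smoothing argument of Theorem~\ref{u-b-e-gene} and the Edgeworth computation of Theorem~\ref{ed-exp-discret} depend only on these three ingredients, so they carry over to $(Y_t/\sqrt{t})_{t>0}$ with the same hypotheses; the replacement of $n$ by $t$ in the rates is harmless since $|t-n|\le 1$. In contrast with the proof of Theorem~\ref{CLT_tpscont}, the increment $(Y_t-Y_n)/\sqrt t$ is not handled separately by a Chebyshev bound, since such a bound would give only a rate of $O(1/\sqrt t)$ in probability and would destroy the sharper rates of Theorems~\ref{u-b-e-gene} and \ref{ed-exp-discret}.
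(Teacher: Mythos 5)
Your proposal follows essentially the same route as the paper: write $t=n+v$, use the semigroup identity to get $\E_{\pi,0}[e^{i\langle\zeta,Y_t\rangle}f(X_t)]=\E_{\pi,0}[e^{i\langle\zeta,Y_n\rangle}(S_v(\zeta)f)(X_n)]$, apply Proposition~\ref{carac-en-0-disc} to $S_v(\zeta)f$ with $v$-uniform bounds coming from the $S_v$-versions of Lemmas~\ref{cont-noyau-discrete} and \ref{deri-noyau-discrete} (this is exactly where (AS\ref{AS-alpha}) replaces (\ref{AS3d})), and control the nonlattice part via (\ref{non-lat-spectral}) together with the contractivity of $S_v(\zeta)$ --- this is precisely the content of Proposition~\ref{carac-en-0-cont} and relation (\ref{63}). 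The only place where you are lighter than the paper is the ``Leibniz rule'' step: because the derivatives of $\zeta\mapsto S_1(\zeta)$ and $\zeta\mapsto S_v(\zeta)$ exist only as operators from $\L^p(\pi)$ into strictly smaller spaces $\L^{p_j}(\pi)$, differentiating the composition $\left(z-S_1(\zeta)\right)^{-1}\circ S_v(\zeta)$ up to order $m_0$ requires the exponent bookkeeping of Appendix~\ref{A} (the chain $\theta_0>\theta_1>\cdots>\theta_{2m_0+2}=1$ built from the maps $T_0,T_1$), which is exactly where the strict inequality $s>\alpha/(\alpha-m_0)$ is consumed --- a point to execute carefully rather than a gap, since the inputs you name are the right ones.
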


Note that  the extensions to the non-stationary case presented in Subsection~\ref{non-stat-carac} can be adapted to  the continuous-time case. 

 When $Y_t$ is defined by $Y_t := \int_{0}^{t} \xi(X_s) \, ds$, any moment condition of the type  $\sup_{v\in[0,1]}\E_{\pi,0}\big[|Y_v|^{\alpha}\big] < \infty$ ($\alpha\geq 1$) is fulfilled if we have $\pi(|\xi|^\alpha) < \infty$. Indeed: 
$$\forall v\in[0,1],\quad \E_{\pi,0}\big[|Y_v|^{\alpha}\big] \leq \E_{\pi,0}\bigg[\int_{0}^{1} |\xi(X_s)|^{\alpha} \, ds\bigg] = \int_{0}^{1} \E_{\pi,0}\big[|\xi(X_s)|^{\alpha}\big] \, ds = \pi(|\xi|^\alpha).$$

 Note that the nonlattice condition used in Theorem~\ref{ext-temps-continu} is the same as  in the discrete-time case (see Subsection~\ref{llt-discrete}) and plays the same role. Indeed, writing  $t=n+v$ where $n$ is the integer part of $t$, we know that $\E_{\pi,0}[e^{i\, \langle \zeta , Y_t\rangle}] = \pi\big(S_1(\zeta)^n(S_v(\zeta)1_{\X})\big)$. 
Using (\ref{non-lat-spectral}) and the fact that $S_v$ is a contraction on $\L^p(\pi)$ ($p\in(1,+\infty)$), it follows that 
\begin{equation} \label{63}
\sup_{\zeta\in K} \big|\E_{\pi,0}\big[e^{i\, \langle \zeta, Y_t\rangle}\big]\big| = O(\rho^n).	
\end{equation}

We prove Proposition~\ref{carac-en-0-cont} below which is the continuous-time version of Proposition~\ref{carac-en-0-disc}. Then, combining Proposition~\ref{carac-en-0-cont} with relation (\ref{63}), the Fourier techniques of the i.i.d.~case can be used to extend Theorems~\ref{llt-discret}-\ref{ed-exp-discret} to $(Y_t/\sqrt{t})_{t>0}$
\begin{pro} \label{carac-en-0-cont}
Let $m_0\in\N^*$. Write time $t$ as $t=n+v$ where $n$ is the integer part of $t$. Under condition (AS\ref{AS-alpha}) for some 
$\alpha>m_0$, there exists a bounded open neighborhood ${\cal O}$ of $\zeta=0$ in $\R^d$ such that we have for all $f\in\L^{s}(\pi)$ with any $s>\alpha/(\alpha-m_0)$: 
$$\forall t\in(0,+\infty),\ \forall \zeta\in{\cal O},\ \ \ \E_{\pi,0}\big[e^{i\, \langle \zeta,Y_t\rangle}\, f(X_t)\big] = \lambda(\zeta)^n\, L\big(\zeta,S_v(\zeta)f\big) + R_n\big(\zeta,S_v(\zeta)f\big),$$
where $\lambda(\cdot)$, $L(\cdot,\cdot)$ and $R_n(\cdot,\cdot)$ are the functions of Proposition~\ref{carac-en-0-disc}. 
Moreover, the $\C$-valued functions $L_{v,f}(\zeta) := L\big(\zeta,S_v(\zeta)f\big)$ and $R_{n,v,f}(\zeta) := R_n\big(\zeta,S_v(\zeta)f\big)$ are of class $\cC^{m_0}$ on ${\cal O}$, and we have the following properties for $\ell=0,\ldots,m_0$:  
\begin{eqnarray*}
&\ & \sup_{\zeta\in{\cal O},\, v\in[0,1]}\, |L_{v,f}^{(\ell)}(\zeta)|<\infty \\  
&\ &  \exists \kappa\in(0,1),\ \sup_{\zeta\in{\cal O},\, v\in[0,1]}\, |R_{n,v,f}^{(\ell)}(\zeta)| = O(\kappa^n). 
\end{eqnarray*}
Note that we have $\lambda(0)=1$, $L_{v,f}(0) = \pi(f)$, and $R_{n,v,1_{\X}}(0) = 0$. 
\end{pro}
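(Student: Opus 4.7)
The plan is to reduce everything to the discrete-time result, Proposition~\ref{carac-en-0-disc}, by using the semigroup property (\ref{semi-group}) together with the moment condition (AS\ref{AS-alpha}) which gives the needed uniform-in-$v$ control. First I would apply (SG) to write, for $t = n + v$ and any $f \in \L^s(\pi)$,
\begin{equation*}
\E_{\pi,0}\big[e^{i\langle \zeta, Y_t\rangle}\, f(X_t)\big] \;=\; \pi\big(S_t(\zeta)f\big) \;=\; \pi\big(S_n(\zeta)\,(S_v(\zeta)f)\big) \;=\; \E_{\pi,0}\big[e^{i\langle \zeta, Y_n\rangle}\, (S_v(\zeta)f)(X_n)\big].
\end{equation*}
Since $S_v(\zeta)$ is a contraction on $\L^s(\pi)$ by Proposition~\ref{pro-semi-grpe}, the function $g := S_v(\zeta)f$ belongs to $\L^s(\pi)$, and applying Proposition~\ref{carac-en-0-disc} directly to $g$ yields the claimed decomposition $\lambda(\zeta)^n L(\zeta,S_v(\zeta)f) + R_n(\zeta,S_v(\zeta)f)$. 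In particular $\lambda(\zeta)$ is the same function as in the discrete case, while the dependence on $v$ and $f$ is entirely packaged into the second argument of $L$ and $R_n$.

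Second, to obtain regularity and uniform bounds, I would extend Lemmas~\ref{cont-noyau-discrete} and \ref{deri-noyau-discrete} from $S_1$ to $S_v$ for any $v \in (0,1]$. The proofs go through verbatim with $Y_1$ replaced by $Y_v$: continuity of $\zeta \mapsto S_v(\zeta)$ from $\R^d$ to $\cL(\L^p,\L^{p'})$ for $p'<p$, and $\cC^{m_0}$ regularity from $\R^d$ to $\cL(\L^p,\L^{p_j})$ with $p_j = \alpha p/(\alpha + jp)$ for $1 \le j \le m_0$, together with the operator-norm estimate
\begin{equation*}
\sup_{\zeta \in \R^d}\, \|S_v^{(j)}(\zeta)\|_{p,p_j} \;\le\; \E_{\pi,0}\big[|Y_v|^{\alpha}\big]^{j/\alpha} \;\le\; \Big(\sup_{v \in (0,1]\cap\T} \E_{\pi,0}\big[|Y_v|^{\alpha}\big]\Big)^{j/\alpha}.
\end{equation*}
Crucially, the right-hand side is finite and \emph{independent of} $v$ thanks to (AS\ref{AS-alpha}), which is exactly what produces the uniform bounds in $v$ in the conclusion. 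A similar uniform-in-$v$ argument gives that $\zeta \mapsto S_v(\zeta)f$ is $\cC^{m_0}$ from ${\cal O}$ to $\L^{p_{m_0}}(\pi)$, with each $S_v^{(j)}(\zeta)f$ bounded in $\L^{p_j}(\pi)$ uniformly in $(v,\zeta) \in [0,1]\times{\cal O}$.

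Third, I would differentiate $L_{v,f}(\zeta) = L(\zeta,S_v(\zeta)f)$ and $R_{n,v,f}(\zeta) = R_n(\zeta,S_v(\zeta)f)$ by Leibniz's rule, using that $L(\zeta,\cdot)$ and $R_n(\zeta,\cdot)$ are continuous linear forms on suitable $\L^{s'}(\pi)$. Each derivative of order $\ell \le m_0$ becomes a finite sum of terms of the form $\partial_\zeta^{\ell-j} L\big(\zeta,\,S_v^{(j)}(\zeta)f\big)$ (and analogously for $R_n$). The consistency of exponents is guaranteed by the choice $s > \alpha/(\alpha-m_0)$: for each pair $(\ell,j)$ with $0 \le j \le \ell \le m_0$, the exponent $p_j = \alpha s/(\alpha + js)$ satisfies $p_j > \alpha/(\alpha - (\ell - j))$ — a short algebraic check reducing to $s(\alpha-\ell) > \alpha$ — so $L^{(\ell-j)}(\zeta,\cdot)$ (resp.\ $R_n^{(\ell-j)}(\zeta,\cdot)$) is well defined on $S_v^{(j)}(\zeta)f$ with the bounds (\ref{L-discret}) and (\ref{R-discret}) from Proposition~\ref{carac-en-0-disc}. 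Combining these with the uniform bounds on $S_v^{(j)}(\zeta)$ yields the required $\sup_{\zeta \in {\cal O},\, v \in [0,1]} |L_{v,f}^{(\ell)}(\zeta)| < \infty$ and $\sup_{\zeta \in {\cal O},\, v\in[0,1]} |R_{n,v,f}^{(\ell)}(\zeta)| = O(\kappa^n)$. The values at $\zeta=0$ follow at once: $L_{v,f}(0) = L(0,P_v f) = \pi(P_v f) = \pi(f)$ by invariance of $\pi$, and $R_{n,v,1_{\X}}(0) = R_n(0,P_v 1_{\X}) = R_n(0,1_{\X}) = 0$.

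The main obstacle is the bookkeeping of Lebesgue exponents in the Leibniz expansion, and verifying that all intermediate quantities $S_v^{(j)}(\zeta)f$ land in a space on which the appropriate derivative of $L(\zeta,\cdot)$ (or $R_n(\zeta,\cdot)$) is defined and continuous — with estimates uniform in $v \in [0,1]$. Once this is handled, the continuous-time statement reduces cleanly to its discrete-time counterpart, and (AS\ref{AS-alpha}) provides precisely the uniform-in-$v$ control that the argument requires.
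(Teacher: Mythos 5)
Your proposal is correct and follows essentially the same route as the paper: reduce to the discrete-time Proposition via the semigroup identity $\pi(S_1(\zeta)^n(S_v(\zeta)f))$, extend the two lemmas on $S_1$ to $S_v$ with bounds uniform in $v$ thanks to (AS3), and then control the derivatives of $\zeta\mapsto L(\zeta,S_v(\zeta)f)$ and $\zeta\mapsto R_n(\zeta,S_v(\zeta)f)$ by a product rule with careful Lebesgue-exponent bookkeeping. The paper implements your Leibniz step at the operator level, proving $\cC^{m_0}$ regularity of $\zeta\mapsto (z-S_1(\zeta))^{-1}\circ S_v(\zeta)$ along a chain of exponents and then contour-integrating to recover $L$ and $R_n$, which is exactly the rigorous form of the exponent check $p_j>\alpha/(\alpha-(\ell-j))$ you identify as the main obstacle.
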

\begin{proof}{} From (\ref{Fourier}) and (\ref{semi-group}), we obtain for any $\zeta\in\R^d$, $f\in\L^p\ $ ($1\le p\le \infty$): 
\begin{equation} \label{egalite_fcara_cascontinu}
\E_{\pi,0}\big[e^{i\, \langle\zeta , Y_t\rangle}\, f(X_t)\big] = \pi\big(S_{n+v}(\zeta)f\big) = \pi\big(S_1(\zeta)^n\big(S_v(\zeta)f\big)\big) =  \E_{\pi,0}\big[e^{i\, \langle \zeta , Y_n \rangle}\, \big(S_v(\zeta)f\big)(X_n)\big],
\end{equation} 
and the desired expansion then follows from Proposition~\ref{carac-en-0-disc}. 
The two following (straightforward) extensions of Lemmas~\ref{cont-noyau-discrete}-\ref{deri-noyau-discrete} are needed to establish the others assertions. Let $t\in(0,+\infty)$. 
\begin{lem} \label{cont-noyau-cont} 
If $1\leq p' < p$, then the map $\zeta\mapsto S_t(\zeta)$ is  continuous from $\R^d$ to $\cL(\L^{p},\L^{p'})$.
\end{lem} 
\begin{lem} \label{deri-noyau-cont}
Assume that $\E_{\pi,0}[|Y_t|^{\alpha}] < \infty$ for some $\alpha>m_0$ ($m_0\in\N^*$), and let $1\leq j \leq m_0$. If $p >1 $ and $p_j :=\alpha p/(\alpha+jp) \ge 1 $, then $\zeta\mapsto S_t(\zeta)$ is $j$-times continuously differentiable from $\R^d$ to $\cL(\L^{p},\L^{p_j})$, and  $\sup_{\zeta\in\R^d}\|S_t^{(j)}(\zeta)\|_{p,p_j} \leq \E_{\pi,0}[|Y_t|^{\alpha}]^{j/ \alpha}$. 
\end{lem} 
\noindent The regularity properties (in $\zeta$) of the functions $L\big(\zeta,S_v(\zeta)f\big)$ and $R_n\big(\zeta,S_v(\zeta)f\big)$ are not a direct consequence of those stated in Proposition~\ref{carac-en-0-disc} because of the additional term $S_v(\zeta)f$. To that effect we need a careful use of the operator-type derivation procedure.  This part is postponed in Appendix~\ref{A} on the basis of \cite{HerPen09}.   
\end{proof}

\section{A Berry-Esseen theorem for the $M$-estimators of $\rho$-mixing Markov chains} \label{sect-B-E-statist}

The $M$-estimators are a general class of estimators in parametric statistics. This covers the special cases of maximum likelihood estimators, the least square estimators and the minimum contrast estimators. In the i.i.d.~case, a modern treatment on $M$-estimation is reported in  \cite[Chap.~5]{Vaa98},  and a Berry-Esseen bound for $M$-estimators is obtained in \cite{Pfa71}. In a statistical framework, such a bound has to be uniform in the parameters.  Pfanzagl's method, which is applied to Markov data in \cite{HerLedPat09}, requires a preliminary result on the rate of convergence in the CLT for additive functionals, with a precise control of the constants with respect to the functional (cf Remark~\ref{RemAppliBE}). Earlier extensions of \cite{Pfa71} to the Markov context are discussed in \cite{HerLedPat09}.
For $\rho$-mixing Markov chains, the closest work to ours is \cite{Rao73}. Our main improvement is on the moment conditions which are now close to those of the i.i.d.~case. A detailed comparison is presented at the end of the section.
 
 Let $\Theta$ be any nonempty parameter set. For a Markov chain $(X_n)_{n\in \N}$ with state space $\X$ and transition kernel $P_\theta$ which depends on $\theta\in\Theta$, we introduce the uniform $\L^2(\pi)$-spectral gap (i.e. the uniform $\rho$-mixing) property.
\begin{description}
\item[\textbf{(\cM)}] \textit{The Markov chain $(X_n)_{n\in \N}$ has a uniform $\L^2(\pi)$-spectral gap with respect to the  parameter set $\Theta$ if }
\begin{enumerate}
	\item \textit{for all $\theta \in\Theta$, $(X_n)_{n\in \N}$ has a unique $P_\theta$-invariant distribution $\pi_\theta$};
	\item \textit{for all $\theta \in\Theta$, $(X_n)_{n\in \N}$ is stationary (i.e.~$X_0\sim\pi_\theta$)}; 
	\item \textit{its transition kernel satisfies Condition (AS\ref{AS-spectralgap}) in a uniform way with respect to $\theta$, namely  there exist $C>0$ and $\kappa\in(0,1)$ such that 
$$\forall\theta\in\Theta,\ \forall n\geq 1,\ \ \ \|P_\theta^n-\Pi_\theta\|_2 \leq C\, \kappa^n,$$
where $\Pi_\theta(f) := \pi_\theta(f)\, 1_{\X}\, $ for $f\in\L^2(\pi)$}
\end{enumerate}
\end{description}
In order to derive a Berry-Esseen bound for the $M$-estimators of $(X_n)_{n\in \N}$ satisfying (\cM), we need a uniform Berry-Esseen bound for some specific additive functionals of the Markov chain $(X_n)_{n\in \N}$. In the next subsection, we propose such a uniform Berry-Esseen bound for the second component of a general parametric MAP. This result will be applied to the MAPs associated with these  specific additive functionals (see Remark~\ref{RemAppliBE}). 

\subsection{A uniform Berry-Essen bound for the second component of a parametric MAP}

Here we propose a refinement of Proposition~\ref{deri-hess} and Theorem~\ref{u-b-e-gene}. 
Let us introduce the following condition. 
\begin{itemize}
\item[\textbf{(A)}] {\it For every $\theta\in\Theta$, $(X_n,Y_n)_{n\in \N}$ is a $\X\times\R$-valued MAP, $Y_1$ is $\P_\theta$-integrable and centered (i.e.~$\E_\theta[Y_1] = 0$).}  
\end{itemize}
Below, the driving Markov chain $(X_n)_{n\in\N}$ is assumed to satisfy condition $(\cM)$. Thus, the  notation $\P_\theta$ stands for the underlying probability measure, which depends on $\theta$ through the transition kernel $Q_\theta$ of $(Y_n,X_n)_{n\in\N}$ and the initial (stationary) distribution $(\pi_\theta,0)$. $\E_\theta[\cdot]$ denotes the associated expectation. 
\begin{thm} \label{u-b-e-para}
Assume that Condition (A) is true for the MAP $(X_n,Y_n)_{n\in\N}$ and that the driving Markov chain $(X_n)_{n\in\N}$ satisfies Condition (\cM). If $M_2 := \sup_{\theta\in\Theta} \E_\theta\big[|Y_1|^{2+\varepsilon}\big] < \infty$ with some $\varepsilon>0$, then  $\sigma^2(\theta) := \lim_n \mathbb{E}_{\theta}[Y_n^2]/n$ is well-defined and is finite for each $\theta\in\Theta$, the function $\sigma^2(\cdot)$ is bounded on $\Theta$, and there exists a positive constant $C_Y$ such that
\begin{equation} \label{SigmaCY}
\forall n\geq1,\qquad
\sup_{\theta\in\Theta} \left|\sigma^2(\theta) - \frac{\mathbb{E}_{\theta}[Y_n^2]}{n}\right| \leq \frac{C_Y}{n}.
\end{equation}
The constant $C_Y$ depends on the sequence $(Y_n)_{n\in\N}$, but only through the constant $M_2$.

If the two following additional conditions hold true 
\begin{eqnarray}
&\ & \exists\, \varepsilon>0,\ \ M_3 := \sup_{\theta\in\Theta} \E_\theta\big[|Y_1|^{3+\varepsilon}\big] < \infty \label{M-Y} \\ 
&\ & \sigma_0 := \inf_{\theta\in\Theta}\sigma(\theta) > 0, \label{sigma-Y}
\end{eqnarray}
then there exists a positive constant $B_Y$ such that 
\begin{equation} \label{BE-BY}
\forall\theta\in\Theta,\ \forall n\geq1,\quad \sup_{a\in\R}\left|\, 
\P_\theta\bigg\{\frac{Y_n}{\sigma(\theta)\sqrt n}\leq a\bigg\} - \Phi(a)\, \right| \leq \frac{B_Y}{\sqrt n}.
\end{equation}
The constant $B_Y$ depends on the sequence $(Y_n)_{n\in\N}$ but only through $\sigma_0$ and the constant $M_3$.
\end{thm}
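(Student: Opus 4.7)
The plan is to build the theorem on top of Proposition~\ref{carac-en-0-disc} and Theorem~\ref{u-b-e-gene} by promoting their pointwise-in-$\theta$ constants to uniform ones. The key observation is that, for each $\theta\in\Theta$, the spectral decomposition
$$\E_\theta\big[e^{i\zeta Y_n}\big] = \lambda_\theta(\zeta)^n\, L_\theta(\zeta,1_\X) + R_n^\theta(\zeta,1_\X)$$
can be derived by Keller--Liverani perturbation of the transition kernel $P_\theta$. The two inputs to that perturbation argument are (i) the $\L^2(\pi_\theta)$-spectral gap of $P_\theta$, which is uniform by (\cM) with the same constants $C$ and $\kappa$, and (ii) the operator-norm estimates of $S_1^\theta(\zeta)-P_\theta$ and its derivatives. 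By Lemmas~\ref{cont-noyau-discrete}--\ref{deri-noyau-discrete} these are controlled by $\E_\theta[|Y_1|^\alpha]^{j/\alpha}$ for $j=0,\ldots,m_0$, hence by $M_2$ (resp.\ $M_3$) alone. Consequently, the neighborhood $\cO$ of $0$, the rate $\kappa\in(0,1)$ in (\ref{R-discret}) and the $\cC^{m_0}$-bounds in (\ref{L-discret}) can be chosen independently of $\theta$.

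Step~1 establishes (\ref{SigmaCY}). Apply the uniform decomposition with $m_0=2$ (requiring only $M_2$). Differentiating the identity twice at $\zeta=0$ gives
$$-\E_\theta[Y_n^2] = n\,\lambda_\theta''(0) + L_\theta''(0,1_\X) + R_n^{\theta\,(2)}(0,1_\X),$$
and Proposition~\ref{deri-hess}(ii) then identifies $\sigma^2(\theta) = -\lambda_\theta''(0) \in [0,+\infty)$. The uniform bounds $\sup_\theta |L_\theta''(0,1_\X)|<\infty$ and $\sup_\theta |R_n^{\theta\,(2)}(0,1_\X)| = O(\kappa^n)$ then yield (\ref{SigmaCY}) with $C_Y$ depending on $(Y_n)_{n\in\N}$ only through $M_2$. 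Boundedness of $\sigma^2(\cdot)$ on $\Theta$ follows by evaluating at $n=1$ and noting that $\E_\theta[Y_1^2] \leq M_2^{2/(2+\varepsilon)}$ by H\"older's inequality.

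Step~2 establishes the Berry--Esseen bound (\ref{BE-BY}). Apply the uniform decomposition with $m_0=3$ (requiring $M_3$), so that $\lambda_\theta$, $L_\theta(\cdot,1_\X)$ and $R_n^\theta(\cdot,1_\X)$ are three times continuously differentiable on $\cO$ with uniform sup-bounds on their derivatives, and
$$\lambda_\theta(\zeta) = 1 - \tfrac{1}{2}\,\sigma^2(\theta)\,\zeta^2 + O(|\zeta|^3)$$
with a remainder term that is uniform in $\theta$. Following the proof of Theorem~\ref{u-b-e-gene} (which is modelled on the i.i.d.\ Berry--Esseen theorem of \cite{Fel71}), invoke Esseen's smoothing inequality to reduce (\ref{BE-BY}) to the estimation of $\varphi_n^\theta(u) := \E_\theta[\exp(iu\, Y_n/(\sigma(\theta)\sqrt{n}))]$ on a suitable interval $|u| \leq A\sqrt{n}$. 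On a small neighborhood $|u|\leq \delta\sigma(\theta)\sqrt{n}$ the Taylor expansion of $\lambda_\theta$ gives the standard comparison with the Gaussian characteristic function, using $\sigma(\theta)\geq \sigma_0 > 0$ to make the resulting constants uniform; on the remaining range, the uniform exponential decay of $R_n^\theta$ together with $|\lambda_\theta(\zeta)|^n = O(e^{-cn})$ uniformly on an annulus of $\cO$ (a direct consequence of the second-order expansion) completes the bound. A careful tracking of constants shows that $B_Y$ depends on the sequence $(Y_n)_{n\in\N}$ only through $\sigma_0$ and $M_3$.

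The main obstacle is establishing the \emph{uniformity in $\theta$} of the constants in the spectral decomposition. Every quantity entering the Nagaev--Guivarc'h expansion---the size of $\cO$, the rate $\kappa$ in the remainder, the sup-bounds on $L_\theta$ and $R_n^\theta$ together with their first three derivatives---must be traced back to (\cM) and to the moment bound $M_3$ via Lemmas~\ref{cont-noyau-discrete}--\ref{deri-noyau-discrete}. Once this uniform version of Proposition~\ref{carac-en-0-disc} is in hand, the remainder of the argument closely parallels \cite{HerLedPat09} and the classical Berry--Esseen proof \cite{Fel71}.
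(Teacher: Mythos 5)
Your proposal is correct and follows essentially the same route as the paper: the paper likewise reduces the theorem to a version of Proposition~\ref{carac-en-0-disc} that is uniform in $\theta$, obtained by observing that the Keller--Liverani perturbation argument only uses the spectral-gap constants (uniform by $(\cM)$) and the operator-norm bounds of Lemmas~\ref{cont-noyau-discrete}--\ref{deri-noyau-discrete} (controlled by $M_2$, resp.\ $M_3$), and then reruns the proofs of Proposition~\ref{deri-hess} and Theorem~\ref{u-b-e-gene} with these uniform constants, citing \cite[Sect.~III.2]{HerLedPat09} for the details. Your write-up simply makes explicit the constant-tracking that the paper delegates to that reference.
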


Recall that the proofs of Proposition~\ref{deri-hess} and Theorem~\ref{u-b-e-gene} are based on Proposition~\ref{carac-en-0-disc}. Here,  for any fixed $\theta\in\Theta$, Proposition~\ref{carac-en-0-disc} applies and gives an expansion of 
$\E_\theta[e^{i\, \zeta Y_n}\, f(X_n)]$, but for a neighbourhood ${\cal O}_\theta$ of $\zeta=0$, some $\C$-valued functions $\lambda_\theta(\cdot)$, $L_\theta(\cdot,f)$, $R_{\theta,n}(\cdot,f)$, and some $\kappa_\theta\in(0,1)$, which all may depend  on $\theta$.  Consequently, in order to prove Theorem~\ref{u-b-e-para}, we must establish that, under Conditions (\cM), (A) and the moment condition $\sup_{\theta\in\Theta} \E_\theta\big[|Y_1|^{m_0+\varepsilon}\big] < \infty$, all the conclusions of Proposition~\ref{carac-en-0-disc} are fulfilled in a uniform way with respect to $\theta\in\Theta$. 
This job has been done in \cite[Sect.~III.2]{HerLedPat09} in the context of $V$-geometrically ergodic Markov chains. The arguments in the present setting are the same up to the following changes: replace the uniform $V$-geometrical ergodicity assumption of \cite{HerLedPat09} by Assumption~($\cM$), and replace the domination condition ($D_{m_0}$) of \cite{HerLedPat09} by the moment condition $\sup_{\theta\in\Theta} \E_\theta\big[|Y_1|^{m_0+\varepsilon}\big] < \infty$.  The previous assumptions allow us to extend Lemmas \ref{cont-noyau-discrete}-\ref{deri-noyau-discrete}, and so Proposition~\ref{carac-en-0-disc}, in a uniform way in $\theta\in\Theta$. 

\begin{rem} \label{RemAppliBE} In the next subsection, Theorem~\ref{u-b-e-para} will be applied as follows. Given a Markov chain $(X_n)_{n\in\N}$ satisfying Condition ($\cM$) with respect to $\Theta$, we consider the MAP $(X_n,Y_n(p))_{n\in\N}$ where $Y_n(p)$ depends on some parameter $p\in\cP$ and is of the form 
$$Y_{n}(p) := \sum_{k=1}^n g(p, X_{k-1},X_k ).$$
 The property of the constant $C_Y$ in Theorem~\ref{u-b-e-para} ensures that Inequality (\ref{SigmaCY}) is  uniform in $p$ and $\theta$ when $M_2 := \sup_{ p\in\cP,\theta\in\Theta}\E_{\theta} \big[|Y_1(p)  |^{2+\varepsilon}\big]< \infty$ (of course, the asymptotic variance in (\ref{SigmaCY}) is replaced by some  $\sigma^2(\theta,p)$). In the same way, the Berry-Esseen bound (\ref{BE-BY}) is  uniform in $p$ and $\theta$ 
when $M_3 := \sup_{ p\in\cP,\theta\in\Theta}\E_{\theta} \big[|Y_1(p)  |^{3+\varepsilon}\big]< \infty$ and $\inf_{p\in\cP,\theta\in\Theta}\sigma(\theta,p)>0$.

Note that these comments extend to a general MAP $(X_n,Y_n)_{n\in\N}$ which may depend on some parameter $\gamma$ via its probability distribution and its functional form, provided that the bounds $M_2,M_3,  \sigma_0$ in Theorem~\ref{u-b-e-para} are uniform in $\gamma$.
\end{rem}

\begin{rem} \label{non-stat-b-e-theta}
The conclusions of Theorem~\ref{u-b-e-para} are also valid when 
$X_0\sim \mu_\theta$ with $ \mu_\theta$ of the form $\mu_\theta = \phi_\theta\, d\pi_\theta$, provided that $\sup_{\theta\in\Theta}\|\phi_\theta\|_{r'} < \infty$, with $r'$ defined as in Subsection~\ref{non-stat-carac} (case $m_0=3$).   
\end{rem}

\subsection{A Berry-Esseen bound for the $M$-estimators of $\rho$-mixing Markov chains}

Throughout this subsection, $\Theta$ is some general parameter space and $(X_n)_{n\ge 0}$ is a Markov chain with state space $\X$ satisfying the uniform $\L^2(\pi)$-spectral gap condition~($\cM$). The underlying probability measure and the associated expectation are denoted by $\P_\theta$ and $\E_\theta[\cdot]$. Recall that $(X_n)_{n\in\N}$ is assumed to be stationary under ($\cM$). Let us introduce the additive functional of $(X_n)_{n\ge 0}$
\begin{equation} \label{mnf}
M_n(\alpha) = \frac{1}{n} \sum_{k=1}^n F(\alpha,X_{k-1},X_k)
\end{equation}
where $\alpha\equiv\alpha(\theta)\in\mathcal{A}$ is the parameter of interest, $F(\cdot, \cdot, \cdot)$ is a real-valued measurable function on $\mathcal{A}\times \X^2$ and $\mathcal{A}$ is an open interval on the real line. Function $F$ is assumed to satisfy the following moment condition 
\begin{equation} \label{sup-F} 
\sup\big\{\,\E_\theta\big[|F(\alpha,X_{0},X_1)|\big],\ \theta\in\Theta,\ \alpha\in\mathcal{A}\,\big\} < \infty.
\end{equation}
Set $M_\theta(\alpha) := \E_{\theta}[F(\alpha,X_0,X_1)]$. We assume that, for
each $\theta\in\Theta$, there exists a unique $\alpha_0 =
\alpha_0(\theta)\in\mathcal{A}$, the so-called true value of the parameter of interest, such that we have $M_\theta(\alpha) > M_\theta(\alpha_0)$, $\forall \alpha\neq \alpha_0$.
To estimate $\alpha_0$, we consider the $M$-estimator $\widehat\alpha_n$  defined by 
$$M_n(\widehat\alpha_n ) =  \min_{\alpha\in\mathcal{A}} M_n(\alpha).$$
Also assume that, for all $(x,y)\in \X^2$, 
the map $\alpha\mapsto F(\alpha,x,y)$ is twice continuously
differentiable on $\mathcal{A}$. Let $F^{(1)}$ and $F^{(2)}$
be the first and second order partial derivatives of $F$ with respect to $\alpha$. Then 
\begin{equation}\label{mn_mn}
M^{(1)}_n(\alpha) = \frac{1}{n} \sum_{k=1}^n F^{(1)}(\alpha,X_{k-1},X_k),
\qquad M^{(2)}_n(\alpha) = \frac{1}{n} \sum_{k=1}^n
F^{(2)}(\alpha,X_{k-1},X_k).
\end{equation}

We shall appeal to the following assumptions.  
\textit{
\begin{description}
\item[(V0)]  There exists some real constant $\varepsilon > 0$ such that 
$$\sup_{\theta\in\Theta,\ \alpha\in\mathcal{A}}\E_\theta\left[\big|F^{(1)}(\alpha,X_{0},X_1)\big|^{3+\varepsilon} + \big|F^{(2)}(\alpha,X_{0},X_1)\big|^{3+\varepsilon}\ \right] < \infty. $$
\item[(V1)] $\ \displaystyle \forall \theta\in\Theta,\ \ \E_{\theta}[F^{(1)}(\alpha_0,X_0,X_1)] = 0$ and $\alpha_0\equiv\alpha_0(\theta)$ is the unique parameter value for which this property is true;
\item[(V2)] $\ m(\theta) := \E_{\theta}[F^{(2)}(\alpha_0,X_0,X_1)]$ satisfies
$\displaystyle  \inf_{\theta\in\Theta} m(\theta) >0 $;
\item[(V3)] $\forall n\geq 1$, $M^{(1)}_n(\widehat{\alpha}_n)=0$.  
   \end{description}}
 Notice that (V0) gives $\sup_{\theta\in\Theta} m(\theta) < \infty$. Set $Y^{(1)}_n(\alpha) := n\, M^{(1)}_n(\alpha)$ and $Y^{(2)}_n(\alpha) := n\, M^{(2)}_n(\alpha)$. Then, thanks to Theorem~\ref{u-b-e-para} applied to MAPs $(X_n,Y^{(1)}_n(\alpha))_{n\in \N}$ and $(X_n,Y^{(2)}_n(\alpha))_{n\in \N}$, the conditions (V0)-(V2) enable us to define the  asymptotic variances:
\begin{equation*}
\sigma_1^2(\theta) :=  \lim_n \frac{1}{n}\,
\mathbb{E}_{\theta}\big[Y^{(1)}_n(\alpha_0)^2\big] \qquad
\sigma_2^2(\theta)  :=  \lim_n \frac{1}{n}\,
\mathbb{E}_{\theta}\left[\big(Y^{(2)}_n(\alpha_0)- n\, m(\theta)\big)^2\right],
\end{equation*}
and we know that $\sup_{\theta\in\Theta}\sigma_j(\theta) < \infty$ for $j=1,2$. The following additional conditions are also required:\textit{
\begin{description}
    \item[(V4)] $\, \inf_{\theta\in\Theta} \sigma_j(\theta) > 0$
\textit{for} $j=1,2$.
\item[(V5)]  There exist $\eta>2$ and a measurable function $W>0$ such that  
$\sup_{\theta \in \Theta}\E_\theta[W^\eta]<\infty$ and
$$\forall (\alpha,\alpha')\in \cA^2, \ \forall (x,y) \in E^2, \ \ \ |F^{(2)}(\alpha,x,y)-F^{(2)}(\alpha',x,y)|\leq |\alpha-\alpha'| \ \big( W(x)+W(y) \big).$$
\item[(V6)]  There exists a sequence $\gamma_n \r 0$ such that
\[
\sup_{\theta\in\Theta}\P_{\theta}\big\{\,|\widehat{\alpha}_n-\alpha_0| \geq d\,\big\} \leq \gamma_n ,
\]
with $d:=\inf_{\theta \in \Theta}m(\theta)/\big(4 (\E_\theta[W(X_0)]+1)\big)$. 
\end{description}}
\begin{thm} \label{Th-B-E-estimator}
Assume that the Markov chain $(X_n)_{n\in\N}$ satisfies Condition (\cM),  that $F$ satisfies Condition~(\ref{sup-F}), that the $M-$estimator $\widehat\alpha_n$ is defined as above, and finally that Conditions \emph{(V0-V6)} are fulfilled. Set $\tau(\theta) :=
\sigma_1(\theta)/m(\theta)$. Then there exists a positive constant $C$ such that 
$$\forall n\geq 1,\quad \sup_{\theta\in\Theta}\sup_{u\in\R}\bigg|\P_{\theta}\left\{\frac{\sqrt n}{\tau(\theta)}\, (\widehat{\alpha}_n -\alpha_0) \leq u\right\}
- \Gamma(u)\bigg| \leq C \left( \frac{1}{\sqrt n} + \gamma_n \right)\, .$$
\end{thm}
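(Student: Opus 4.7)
The strategy I would follow is Pfanzagl's linearization of the M-estimator, made uniform in $\theta\in\Theta$, exactly as adapted to the Markov setting in the reference \cite{HerLedPat09} cited just above. Starting from (V3) and a first-order Taylor expansion of $M_n^{(1)}$ at $\alpha_0$, there exists $\widetilde\alpha_n$ strictly between $\widehat\alpha_n$ and $\alpha_0$ such that
$$0 \;=\; M_n^{(1)}(\widehat\alpha_n) \;=\; M_n^{(1)}(\alpha_0) \;+\; (\widehat\alpha_n-\alpha_0)\, M_n^{(2)}(\widetilde\alpha_n).$$
Provided $M_n^{(2)}(\widetilde\alpha_n)\neq 0$, this gives the key identity
$$\frac{\sqrt n}{\tau(\theta)}(\widehat\alpha_n-\alpha_0) \;=\; -\,\frac{\sqrt n\, M_n^{(1)}(\alpha_0)}{\sigma_1(\theta)}\;\cdot\;\frac{m(\theta)}{M_n^{(2)}(\widetilde\alpha_n)}.$$
The plan is to control the first factor by a uniform Berry--Esseen bound and to show that the second factor equals $1$ up to a correction of order $O(1/\sqrt n+\gamma_n)$ on an event of probability at least $1-O(1/\sqrt n+\gamma_n)$.

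The numerator is handled directly by applying Theorem~\ref{u-b-e-para} together with Remark~\ref{RemAppliBE} to the parametric MAP $(X_n,Y_n^{(1)}(\alpha_0))_{n\in\N}$ with $Y_n^{(1)}(\alpha_0)=\sum_{k=1}^n F^{(1)}(\alpha_0,X_{k-1},X_k)$: (V1) provides centering, (V0) yields the uniform moment condition $M_3<\infty$, and (V4) gives $\inf_\theta\sigma_1(\theta)>0$, so
$$\sup_{\theta\in\Theta}\sup_{u\in\R}\left|\P_\theta\left\{-\frac{\sqrt n\, M_n^{(1)}(\alpha_0)}{\sigma_1(\theta)}\leq u\right\}-\Phi(u)\right|\;\leq\;\frac{B_1}{\sqrt n}.$$
For the denominator I would decompose
$$m(\theta)-M_n^{(2)}(\widetilde\alpha_n) \;=\; \bigl[m(\theta)-M_n^{(2)}(\alpha_0)\bigr]\;+\;\bigl[M_n^{(2)}(\alpha_0)-M_n^{(2)}(\widetilde\alpha_n)\bigr].$$
The first bracket is controlled uniformly at rate $1/\sqrt n$ in probability by combining the variance estimate (\ref{SigmaCY}) of Theorem~\ref{u-b-e-para} (applied to the centered MAP $(X_n,Y_n^{(2)}(\alpha_0)-n\,m(\theta))$) with Chebyshev's inequality, using (V0) and (V4) for $j=2$. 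The second bracket is bounded via (V5) by $|\widehat\alpha_n-\alpha_0|\cdot\tfrac1n\sum_{k=1}^n(W(X_{k-1})+W(X_k))$, where the empirical average is $\L^\eta$-bounded uniformly in $\theta$, and $|\widehat\alpha_n-\alpha_0|<d$ holds with $\P_\theta$-probability at least $1-\gamma_n$ by (V6). The definition of the threshold $d$ together with (V2) then guarantees that on some event $A_n$ of $\P_\theta$-probability at least $1-O(1/\sqrt n+\gamma_n)$ one has $M_n^{(2)}(\widetilde\alpha_n)\geq\inf_\theta m(\theta)/2>0$, together with the quantitative bound $|m(\theta)/M_n^{(2)}(\widetilde\alpha_n)-1|\leq\varepsilon_n$ for some $\varepsilon_n=O(1/\sqrt n+\gamma_n)$ uniform in $\theta$.

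The final splicing is routine. On $A_n$ the renormalized estimator equals the product of a Berry--Esseen-controlled quantity and a factor in $[1-\varepsilon_n,1+\varepsilon_n]$, so that for any $u\in\R$
$$\P_\theta\left\{\tfrac{\sqrt n}{\tau(\theta)}(\widehat\alpha_n-\alpha_0)\leq u\right\}\;\leq\;\P_\theta\left\{-\tfrac{\sqrt n\, M_n^{(1)}(\alpha_0)}{\sigma_1(\theta)}\leq u(1+\varepsilon_n)\right\}+\P_\theta(A_n^c),$$
and symmetrically for a lower bound. The uniform Lipschitz inequality $\sup_{u\in\R}|\Phi(u(1\pm\varepsilon_n))-\Phi(u)|\leq C\varepsilon_n$ converts this into the target bound $O(1/\sqrt n + \gamma_n)$. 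The point that requires the most care is making every ingredient above truly uniform in $\theta\in\Theta$: one must simultaneously obtain $\P_\theta(A_n^c)=O(1/\sqrt n+\gamma_n)$, $\varepsilon_n=O(1/\sqrt n+\gamma_n)$, and a Berry--Esseen bound with constants independent of $\theta$. This is precisely why the uniform constants $B_Y$ and $C_Y$ of Theorem~\ref{u-b-e-para}, together with the strengthening of the moment order from $3$ to $3+\varepsilon$ in (V0) and the uniform positivity conditions (V2) and (V4), are all indispensable.
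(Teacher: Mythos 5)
Your overall strategy (Pfanzagl linearization via (V3), uniform Berry--Esseen for the numerator, control of the denominator, splicing) is the right skeleton, and it is indeed the route of \cite{HerLedPat09} that the paper invokes. But there is a genuine gap at the step you treat most casually: the claim that $\bigl|m(\theta)/M_n^{(2)}(\widetilde\alpha_n)-1\bigr|\leq\varepsilon_n$ with $\varepsilon_n=O(1/\sqrt n+\gamma_n)$ on an event of probability $1-O(1/\sqrt n+\gamma_n)$. For the bracket $m(\theta)-M_n^{(2)}(\alpha_0)$, Chebyshev together with (\ref{SigmaCY}) gives $\P_\theta\{|M_n^{(2)}(\alpha_0)-m(\theta)|>\delta\}\leq C/(n\delta^2)$; to make this probability $O(1/\sqrt n)$ you are forced to take $\delta$ of order $n^{-1/4}$, so your $\varepsilon_n$ is at best $O(n^{-1/4})$ and the final bound degrades to $O(n^{-1/4}+\gamma_n)$. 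No moment-based tail bound available under (V0) can repair this: any argument that first localizes $M_n^{(2)}(\alpha_0)$ in a deterministic window and then applies the Lipschitz property of $\Phi$ must trade the window size against the exceptional probability and cannot reach $n^{-1/2}$ in both simultaneously. This is precisely why Pfanzagl's method does \emph{not} sandwich the random factor: it rewrites the event $\{Z_n\leq u(1+\Delta_n)\}$ as $\{-(\sigma_1(\theta)\sqrt n)^{-1}Y_n(p)\leq u\}$, absorbing the fluctuation of $M_n^{(2)}(\alpha_0)$ into the statistic through the linear combination $g(p,X_{k-1},X_k)=F^{(1)}(\alpha_0,X_{k-1},X_k)+\frac{v}{\sqrt q}\frac{\sigma_1(\theta)}{m(\theta)}\bigl(F^{(2)}(\alpha_0,X_{k-1},X_k)-m(\theta)\bigr)$ with $|v|\leq 2\sqrt{\ln q}$, and then applies the uniform Berry--Esseen bound of Theorem~\ref{u-b-e-para} (via Remark~\ref{RemAppliBE}) to the whole family $(Y_n(p))_n$, uniformly in $(\theta,p)$. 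The paper explicitly identifies this as the main difficulty of the proof, and it is the step your proposal replaces by an argument that does not close.

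A secondary, related gap: to control $M_n^{(2)}(\widetilde\alpha_n)-M_n^{(2)}(\alpha_0)$ via (V5) you bound it by $|\widehat\alpha_n-\alpha_0|$ times an empirical average of $W$, but (V6) only gives $|\widehat\alpha_n-\alpha_0|<d$ with $d$ a fixed constant; that yields an $O(1)$ bound on this bracket, not $O(1/\sqrt n)$. One needs a preliminary quantitative consistency rate of the form $\P_\theta\{\sqrt n\,|\widehat\alpha_n-\alpha_0|>c\sqrt{\ln n}\}=O(1/\sqrt n+\gamma_n)$, which is itself established inside Pfanzagl's argument (and is also where the restriction $|v|\leq 2\sqrt{\ln q}$ in the definition of $g$ comes from, the range $|u|>2\sqrt{\ln n}$ being handled separately by tail estimates). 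As written, your proof establishes a weaker rate than the theorem claims.
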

Thanks to Theorem~\ref{u-b-e-para}, the proof of Theorem~\ref{Th-B-E-estimator} borrows the adaptation of Pfanzgal's method given in \cite{HerLedPat09}. One of the main difficulties in this method is to obtain a Berry-Esseen bound for the additive functionals  $Y_{n}(p):=\sum_{k=1}^n g(p,X_{k-1},X_k)$ with $p:=( v,q,\alpha_0)$ and 
$$g(p,X_{k-1},X_k) := F^{(1)}(\alpha_0,X_{k-1},X_k) + \frac{v}{\sqrt q}\frac{\sigma_1(\theta)}{m(\theta)}\big(F^{(2)}(\alpha_0,X_{k-1},X_k) - m(\theta)\big)$$
for $|v|\le 2 \sqrt{\ln q}$.  Observe that we have from (V0-V2): 
$$\sup_{\{(v,q): |v|\le 2 \sqrt{\ln q}\}, \, \theta\in\Theta}\E_\theta\big[\ \big|g(p,X_{0},X_1)\big|^{3+\varepsilon}\ \big] < \infty. $$ 
Then, Remark~\ref{RemAppliBE} gives the desired Berry-Esseen bound for $(Y_n(p))_{n\in\N}$ in a uniform way over the parameter $(\theta,p)$.

When the $X_n$'s are i.i.d., Theorem~\ref{Th-B-E-estimator} corresponds to Pfanzagl's theorem \cite{Pfa71} up to the following changes: in \cite{Pfa71}, $\pi_\theta$ is the common law of the $X_n$'s; the additive functional is $M_n(\alpha) = (1/n) \sum_{k=1}^n F(\alpha,X_k)$; we simply have $\sigma_1^2(\theta) = \E_{\theta}\big[F^{(1)}(\theta,X_0)^2\big]$ and $\sigma_2^2(\theta) = \E_{\theta}\big[(F^{(2)}(\theta,X_0)-m(\theta))^2\big]$, and finally Assumption~(V0) is replaced by the weaker (and optimal) moment condition: $\sup_{\theta\in\Theta}\E_{\theta}\big[\, |F^{(1)}(\theta,X_0)|^3 + |F^{(2)}(\theta,X_0)|^3\, \big] < \infty$. 

Earlier extensions of \cite{Pfa71} to the Markov context are discussed in \cite{HerLedPat09}. Let us  compare our result with that of \cite{Rao73}, in which the family of transition probabilities $P_\theta$ is assumed to satisfy a uniform Doeblin condition with respect to  $\theta\in\Theta$. This condition corresponds to a uniform $\L^\infty$-spectral gap condition with respect to $\Theta$ which is stronger than our Condition~($\cM$) (see Subsection~\ref{sec-basic-fact-map}). Let us mention 
that the moment condition on $F^{(1)}$ and $F^{(2)}$ in \cite{Rao73} is the following ($\alpha(\theta) = \theta$ in \cite{Rao73}): 
$$\sup_{x\in \X,\theta\in\Theta}\E_{\theta}\left[
\big|F^{(1)}(\theta,X_{0},X_1)\big|^3 + \big|F^{(2)}(\theta,X_{0},X_1)\big|^3
\, \big|\, X_0=x\right] < \infty.$$
Because of the supremum over $x\in \X$, this condition is in general much stronger than our moment condition~(V0) (despite the order $3+\varepsilon$ in (V0) instead of $3$). To see that, neglect the role of $\theta$ and consider a functional $f$ on $\X$. Then the difference between the condition used in \cite{Rao73} and (V0) is comparable to that between $\sup_{x\in \X}\E[|f(X_1)|^3\, |\, X_0=x]$ and $\E_\pi[|f(X_1)|^{3+\varepsilon}]$ (or, equivalently, between the supremum norm $\|P(|f|^3)\|_\infty$ and the norm $\|f\|_{3+\varepsilon}$ of $f$ in $\L^{3+\varepsilon}(\pi)$). Consequently, Theorem~\ref{Th-B-E-estimator} applies to the models considered in \cite{Rao73} but requires weaker moment conditions.
\begin{rem} \label{non-stat-estim}
The conclusion of Theorem~\ref{Th-B-E-estimator} holds true when $X_0\sim \mu_\theta$ and $\mu_\theta$  satisfies the condition given in Remark~\ref{non-stat-b-e-theta}. In this case, if $F$ is such that 
$$\sup_{\theta\in\Theta,\ \alpha\in\mathcal{A}}\E_\theta\big[|F(\alpha,X_{0},X_1)\big|^{1+\varepsilon}\big] < \infty$$ 
for some $\varepsilon > 0$, then $M_\theta(\alpha) = \E_\theta[F(\alpha,X_0,X_1)]$ can also be  defined by (see Subsection~\ref{non-stat-carac}): 
$$M_\theta(\alpha) = \lim_{n\rightarrow\infty}\E_{\theta,\mu_\theta} [M_n(\alpha)].$$ 
\end{rem}

\section{Conclusion}

In this paper, we propose limit theorems for the second component $(Y_t)_{t\in\T}$ of a discrete or continuous-time Markov Additive Process (MAP) $(X_t,Y_t)_{t\in\T}$ when $(X_t)_{t\in\T}$ has a $\L^2(\pi)$-spectral gap. The derivation of the CLT is based on a $\rho$-mixing condition strongly connected to the $\L^2(\pi)$-spectral gap property. The  results related to the convergence rate in the CLT are developed from  the weak spectral method of \cite{HerPen09}. Note that here the discrete and continuous-time cases are covered in a unified way. In this context, the  semigroup property (\ref{semi-group}) for the family of operators $\big(S_t(\zeta)\big)_{t\in\T}$ defined by $\big(S_t(\zeta)f\big)(x) := \E_{(x,0)}\big[e^{i\, \langle \zeta, Y_t\rangle}\, f(X_t)\big]\, $ ($\zeta\in\R^d$, $x\in X$) has a central role. We mention that this semigroup property is essentially true only for MAPs. The impact of the results is expected to be high for models involving a  $\L^2(\pi)$-spectral gap, since the limit theorems are valid for general (discrete and continuous time) MAPs, and under optimal (or almost optimal) moment conditions.   
This is illustrated in Section~\ref{sect-B-E-statist} where a Berry-Esseen bound for the $M$-estimator  associated with $\rho$-mixing Markov chains, is derived under the (almost) expected moment condition.

\appendix
\section{Additional material for the proof of Proposition \ref{carac-en-0-cont}.} \label{A}

Here, we study the regularity properties of the functions $\zeta \mapsto L\big(\zeta,S_v(\zeta)f\big)$ and $\zeta \mapsto R_n\big(\zeta,S_v(\zeta)f\big)$ involved in the decomposition of Proposition \ref{carac-en-0-cont}. 

\hspace*{5mm}\textbf{1)} Let us recall that we have (see (\ref{semi-gpe-discret})) 
$$\forall \zeta\in\R^d, \forall n\in\N,\quad \E_{\pi,0}\big[e^{i \langle \zeta , Y_n \rangle}\, f(X_n)\big] = \pi\big(S_1(\zeta)^n f\big).$$
and, for $t$ in some open neighbourhood $\cO$ of $\zeta=0$, (see \cite[7.2]{HerPen09})
$$S_1(\zeta)^n=\lambda(\zeta)^n\Pi(\zeta)+N(\zeta)^n,$$
where $\lambda(\zeta)$ is the dominant eigenvalue of $S_1(\zeta)$, $\Pi(\zeta)$ is the associated rank-one eigenprojection and $N(\zeta)$ is a bounded linear operator on each $\L^p(\pi)$  $1< p <\infty$. Both equalities imply that 
\begin{equation} \label{egalite_fcara}
\E_{\pi,0}\big[e^{i \langle \zeta , Y_n \rangle}\, f(X_n)\big] = \lambda(\zeta)^n\pi\big(\Pi(\zeta) f \big)+\pi\big(N(\zeta)^n f\big).
\end{equation} 
Furthermore the eigenprojection $\Pi(\zeta)$ and the operators $N(\zeta)^n$ are defined as in the standard perturbation theory by 
\[\Pi(\zeta) = \frac{1}{2i\pi} \oint_{\Gamma_1} \left(z-S_1(\zeta)\right)^{-1}\, dz,
\qquad
N(\zeta)^n = \frac{1}{2i\pi} \oint_{\Gamma_0} z^n \ \left(z-S_1(\zeta)\right)^{-1}\, dz, \]
where these line integrals are considered respectively on some oriented circle $\Gamma_1$ centered at $z=1$, and on some oriented circle $\Gamma_0$ centered at $z=0$, with radius~$\kappa<1$ where $\kappa$ is (for instance) $(1+\exp(-\varepsilon))/2$ with $\varepsilon$ defined in (\ref{Expo}).

\textbf{2)} Let us return to the continuous-time case. We obtain from (\ref{egalite_fcara_cascontinu}) and (\ref{egalite_fcara})  
\[\forall t\in\cO,\quad  \E_{\pi,0}\big[e^{i\, \langle\zeta , Y_t\rangle}\, f(X_t)\big] 
=\lambda(\zeta)^n\pi\big(\Pi(\zeta) (S_v(\zeta)f) \big)+\pi\big(N(\zeta)^n (S_v(\zeta)f)\big).\] 
Thus, we can write with the notations introduced in Proposition \ref{carac-en-0-cont} 
\[ \forall t\in\cO,\quad L(\zeta,S_v(\zeta)f):=\pi\big(\Pi(\zeta) (S_v(\zeta)f) \big), 
\qquad R_n(\zeta,S_v(\zeta)f):=\pi\big(N(\zeta)^n (S_v(\zeta)f)\big).
\]
Therefore, we only need to study the regularity of the map $\zeta \mapsto \left(z-S_1(\zeta)\right)^{-1} \circ S_v(\zeta)$ on $\cO$ for controlling that of the map $\zeta \mapsto \E_{\pi,0}\big[e^{i \langle \zeta , Y_t \rangle}\, f(X_t)\big]$ on $\cO$ (and, as a result, proving Proposition \ref{carac-en-0-cont}). 

\textbf{3)} Recall that $\|\cdot\|_{p,p'}$ denotes the operator norm in the space $\cL(\L^p,\L^{p'})$ of the linear bounded operators from $\L^p(\pi)$ to $\L^{p'}(\pi)$. The notation $W(\cdot) \in {\cC}^j(\theta,\theta')$ means that there exists a bounded open  neighborhood~${\cV}$ of $\zeta=0$ in $\R^d$ such that:
\begin{itemize}
	\item[] $\forall \zeta \in \cV$, $W(\zeta) \in {\cL}(\L^\theta,\L^{\theta'})$ and $W : {\cV} \mapsto {\cL}(\L^\theta,\L^{\theta'})$ has a continuous $j$-order differential on ${\cV}$.
\end{itemize}
Let us introduce the maps $U:\zeta \mapsto (z-S_1(\zeta))^{-1}$ and $V:\zeta\mapsto S_v(\zeta)$. 
We are going to apply the next obvious regularity property. Let $1\le \theta_{2m_0+2} < \theta_{2m_0+1} <\cdots< \theta_1 < \theta_0 <\infty$ (note that $\L^{\theta_0} \subset \L^{\theta_1} \subset \cdots \subset
\L^{\theta_{2{m_0}+1}}\subset \L^{\theta_{2{m_0}+2}}$), and  assume that we have: 
\begin{equation*}
\begin{split} 
& U\in \cC^0({\theta_{2{m_0}+1}},{\theta_{2{m_0}+2}})
\cap \cC^1({\theta_{2{m_0}-1}},{\theta_{2{m_0}+2}})  \cap \cdots
\cap \cC^{{m_0}-1}({\theta_3},{\theta_{2{m_0}+2}})
\cap \cC^{{m_0}}({\theta_1},{\theta_{2{m_0}+2}}) \\[0.2cm]
& V\in \cC^0({\theta_0},{\theta_1})
\cap \cC^1({\theta_1},{\theta_3})  \cap \cdots
\cap \cC^{{m_0}-1}({\theta_1},{\theta_{2{m_0}-1}})
\cap \cC^{{m_0}}({\theta_1},{\theta_{2{m_0}+1}}).
\end{split}
\end{equation*}
Then $UV \in \cC^{{m_0}}({\theta_0},{\theta_{2{m_0}+2}})$.

\textbf{4)} Let us introduce the following (non-increasing) maps from $[1,+\infty)$ to $\R$:  
\[ T_0(\theta):= \frac{\alpha\theta}{\alpha+\varepsilon_0\theta}  
 \quad \mbox{ and } \quad  T_1(\theta) := \frac{\alpha\theta}{\alpha+\theta}
\]
where $\varepsilon_0$ will be defined in (\ref{94}). 
Let $\theta>1$. Lemma~\ref{cont-noyau-cont} and the continuous inclusions between the Lebesgue spaces show that 
\begin{equation} \label{T0-cont} 
T_0(\theta)\ge 1\ \Rightarrow\ \forall\theta'\in[1,T_0(\theta)],\ S_v(\cdot)\in\cC^0(\theta,\theta').
\end{equation}
On the same way, Lemma~\ref{deri-noyau-cont} gives for $j=1,\ldots,m_0$: 
\begin{equation} \label{T1-reg} 
T_1^j(\theta)\ge 1\ \Longrightarrow\ \forall\theta'\in [1,T_1^j(\theta)],\ S_v(\cdot)\in\cC^j(\theta,\theta'),
\end{equation}
and the derivatives in the last property are uniformly bounded in $v\in[0,1]$ on any bounded open  neighborhood of $\zeta=0$. 

Now set $\theta_0 := s$, $\theta_1 := T_0(s)$, and observe that the assumption on $s$ (i.e.~$s>\alpha/(\alpha-m_0)$) is equivalent to $T_1^{m_0}(\theta_0) = \alpha \theta_0/(\alpha+m_0\theta_0) >1$, so that there exists $\varepsilon_0>0$ such that 
\begin{equation} \label{94}
(T_0T_1)^{m_0}T_0(\theta_1) = (T_0T_1)^{m_0}T_0\big(T_0(\theta_0)\big) = \frac{\alpha \theta_0}{\alpha+(m_0+(m_0+2)\varepsilon_0)\theta_0} =1.
\end{equation}
Define   
$$\theta_2:=T_0(\theta_1),\ \theta_3:=T_1T_0(\theta_1),\ \theta_4 := T_0T_1T_0(\theta_1),\ \ldots,\theta_{2m_0+2} := (T_0T_1)^{m_0}T_0(\theta_1),$$
namely: $\theta_{2j} := (T_0T_1)^{j-1}T_0(\theta_1)$  for $j=1,\ldots,m_0+1$, and $\theta_{2j+1} := T_1(T_0T_1)^{j-1}T_0(\theta_1)$ for $j=1,\ldots,m_0$. Note that $\theta_{2m_0+2}=1$. From (\ref{T0-cont})-(\ref{T1-reg}), $V(\cdot): = S_v(\cdot)$ satisfies the regularity properties stated in part~3), and the corresponding derivatives (on any bounded open  neighborhood of $\zeta=0$) are uniformly bounded in $v\in[0,1]$. 

Next, setting $I:=\{\theta_1,\theta_2,\ldots,\theta_{2m_0+2}\}$, it follows from (\ref{T0-cont})-(\ref{T1-reg}) (with $v=1$) 
that condition~$\cC(m_0)$ of \cite[7.1]{HerPen09} holds, so that the conclusions reported in \cite[p.48]{HerPen09} are true: 
\begin{itemize}
\item[$(H_0)$] {\it if $\theta\in I$ and 
$T_0(\theta)\in I$, then $\zeta \mapsto (z-S_1(\zeta))^{-1}\in\cC^0\left(\theta,T_0(\theta)\right)$ uniformly in $z \in \Gamma_0 \cup \Gamma_1$. } 
\end{itemize}
and for $\ell=1,\ldots,m_0$:
\begin{itemize}
\item[$(H_\ell)$]  
{\it If $\displaystyle \, \theta\in \bigcap_{k=0}^\ell \big[T_0^{-1}(T_0T_1)^{-k}(I)\cap 
    (T_1T_0)^{-k}(I)\big]$, then $\zeta \mapsto (z-S_1(\zeta))^{-1}\in \cC^\ell\left({\theta},
{(T_0T_1)^\ell T_0(\theta)}\right)$ uniformly in $z \in \Gamma_0 \cup \Gamma_1$.}
\end{itemize}
Therefore $U(\cdot) := (z-S_1(\cdot))^{-1}$ satisfies the regularity properties stated in part 3). 

\textbf{5)} Finally, we deduce from the property of part 3) that there exists a neighbourhood~${\cV}$ of $\zeta=0$ in $\R^d$ such that the map $\zeta \mapsto \left(z-S_1(\zeta)\right)^{-1} \circ S_v(\zeta)$ is $m_0$-times continuously differentiable from $\cV$ to 
${\mathcal L}(\L^s(\pi),\L^{1}(\pi))$ uniformly in $z \in \Gamma_0 \cup \Gamma_1$ 
and furthermore we have for $\ell=0,\ldots,m_0$: 
$$\sup \bigg \{\big\| \big(\left(z-S_1(\zeta)\right)^{-1}\circ S_v(\zeta) \big)^{(\ell)}\big\|_{s,1}; \ z \in \Gamma_0 \cup \Gamma_1, \ \zeta\in \cO,\ v\in(0,1] \bigg\} <\infty.$$  

\bibliographystyle{apalike}

\end{document}